\DeclareSymbolFont{cyrletters}{OT2}{wncyr}{m}{n}
\DeclareMathSymbol{\Sha}{\mathalpha}{cyrletters}{"58}
\newtheorem{theorem}{Theorem}[section]
\newtheorem{lemma}[theorem]{Lemma}
\newtheorem{proposition}[theorem]{Proposition}
\newtheorem{corollary}[theorem]{Corollary}
\theoremstyle{definition}
\newtheorem{definition}[theorem]{Definition}
\theoremstyle{remark}
\numberwithin{equation}{section}
\newcommand{\Q}{{\mathbb Q}}
\newcommand{\Z}{{\mathbb Z}}
\keywords{Elliptic curve; Congruent number; Rational Points}
\subjclass[2020]{Primary: 11G05; Secondary: 11D45, 11E04, 11F37 }
\begin{document}


\title[A variant of CNP]{A variant of the congruent number problem}


\author{Jerome T. Dimabayao}
\address{Institute of Mathematics, College of Science, 
University of the Philippines Diliman, Quezon City, 
Philippines}
\email{jdimabayao@math.upd.edu.ph}


\author{Soma Purkait}
\address{Department of Mathematics, Tokyo Institute of Technology,
Tokyo, 
Japan}
\email{somapurkait@gmail.com}



\begin{abstract}
A positive integer $n$ is called a $\theta$-congruent number 
if there is a triangle with sides $a,b$ and $c$ for which the 
angle between $a$ and $b$ is equal to $\theta$ and its area is 
$n\sqrt{r^2 - s^2}$, where $0 < \theta < \pi$, $\cos \theta = s/r$ 
and $0 \leq |s| < r$ are relatively prime integers.  
The case $\theta=\pi/2$ refers to the classical congruent numbers.
It is known that the problem of classifying $\theta$-congruent numbers 
is related to the existence of rational points on the elliptic curve 
$y^2 = x(x+(r+s)n)(x-(r-s)n)$. 
In this paper, we deal with a variant of the congruent number problem 
where the cosine of a fixed angle is $\pm \sqrt{2}/2$.
\end{abstract}


\maketitle




\setlength\parindent{24pt}
\maketitle

%



\section{Introduction}

A positive integer $n$ is called a \emph{congruent number} if it is the area of a right triangle with rational sides. The congruent number problem asks to determine all congruent numbers and is one of the oldest yet unsolved problems. It is well known that $n$ is congruent if and only if the elliptic curve $y^2 = x^3 - n^2x$ has a positive rank. Tunnell \cite{Tunnell} in his famous work provided a simple criterion for a number to be congruent in terms of ternary quadratic forms. However, his result relied on the Birch and Swinnerton-Dyer conjecture, which is still unproven. In particular it is conjectured that any positive integer $n \equiv 5,6$ and $7 \pmod{8}$ is congruent; this has been partially resolved by a recent work of Tian \cite{Tian}. 
 
There are different variants of the  congruent number problem in literature. Fujiwara \cite{Fujiwara} and Kan \cite{Kan} studied the $\theta$-congruent number problem for an angle $\theta$ with $0 < \theta < \pi$ and rational cosine, while  Yoshida~\cite{Yoshida} considered the special case when $\theta=\pi/3,\ 2\pi/3$. For a nice survey of this and other variants of the congruent number problem please refer to \cite{TopYui}. 
Lario \cite{Lario} considered the variant 
of the $\theta$-congruent number problem  in the case where $\theta = \pi/6$. 
In this case, $\cos \theta = \sqrt{3}/2$ is no longer rational. 
A positive integer $n$ is called $\pi/6$-congruent if there exists a triangle with 
sides $a$, $b\sqrt{3}$, $c$ and angle $\pi/6$ opposite to the side 
$c$ and area equal to $n \sqrt{3}$, where $a,b,c$ are rational numbers. 
The associated $\pi/6$-congruent number elliptic curve 
is given by $C_n: y^2 = x^3 + 6nx^2 - 3n^2 x$. 
It is isogenous to the Mordell curve 
$D_n : y^2 = x^3 + n^3$, which has 
been studied extensively by several mathematicians.

In this work, we study the next simple case where 
$\cos \theta$ is not rational; namely, the case 
$\theta = \pi/4$ or $3 \pi/4$.
We call a positive integer $n$  
\emph{$\pi/4$-congruent} if there exists a 
triangle with sides $a$, $b \sqrt{2}$, 
$c$ and angle $\pi/4$ opposite to the 
side $c$ with $a,b,c$ positive rationals and area $n$. 
It turns out that $n$ is $\pi/4$-congruent if and only if 
the elliptic curve $E_n : y^2 = x^3 + 2nx^2 - n^2x$ 
has infinitely many rational points.
This is derived in Section \ref{CN-EC}. 
In Section \ref{Gen_Birch_Lemma}, we 
list some families of $\pi/4$-congruent numbers with 
arbitrarily many prime factors using a recent 
work of Shu and Zhai \cite{SZ}. 
We compute cusp forms of half-integer weight 
that correspond, via Shimura correspondence, with $E_1$, $E_{-1}$, $E_2$ and $E_{-2}$. 
We apply Waldspurger's theorem to obtain Tunnell-like results, 
allowing us to obtain non-examples of $\pi/4$-congruent primes 
or double primes. These are presented in Section \ref{Waldspurger}. 
In Section \ref{distribution}, we use a simple 
parametrization of $\pi/4$-congruent numbers to show 
that every congruence class modulo a positive integer 
contains infinitely many $\pi/4$-congruent  
numbers.
Finally, in Section \ref{tiling}, we discuss the relation of 
$\pi/4$-congruent numbers with tilings of the square.
Analogous results are given for $\theta=3\pi/4$.

\section{Elliptic curves associated with $\pi/4$- and $3\pi/4$-congruent numbers}\label{CN-EC}

We first recall our definition given in the introduction. 

\begin{definition}
A positive integer $n$ is called 
$\pi/4$-congruent if there exists a 
triangle with sides $a$, $b \sqrt{2}$, 
$c$ and angle $\pi/4$ opposite to the 
side $c$, where $a$,$b$, and $c$ are positive rationals and 
the area is $n$. 
\end{definition}

For example, $2$ is $\pi/4$-congruent as we have the triangle with sides of length 
$1$, $4\sqrt{2}$, $5$, with angle $\pi/4$ 
opposite to the side of length $5$ and area $2$. 
Another example of a $\pi/4$-congruent number is $5$, since the triangle with sides of length 
$\frac{7}{2}$, $\frac{20 \sqrt{2}}{7}$, $\frac{41}{14}$, with angle $\pi/4$ 
opposite to the side of length $41/14$ has area $5$.

By cosine formula we see that 
$n$ is $\pi/4$-congruent if and only if 
there exist positive rationals  $a$, $b$ and $c$ such that 
\begin{equation}
c^2 = a^2 + 2b^2 - 2ab \quad \mbox{ and } 
\quad 2n = ab.
\end{equation}
The above system can be written as 
\begin{equation}
\left(\frac{c}{ab}\right)^2 = \left( \frac{1}{b} \right)^2 + 2\left( \frac{1}{a} \right)^2 - 2
\left( \frac{1}{ab} \right) \quad \mbox{ and } 
\quad 2n\left( \frac{1}{ab} \right) = 1.
\end{equation}
Thus, $n$ is $\pi/4$-congruent if and only if 
there exist positive rationals $a$, $b$ and $c$ such that 
\begin{equation}
c^2 = a^2 + 2b^2 - 2ab \quad \mbox{ and } 
\quad 2nab = 1.
\end{equation}


Note that $n$ is $\pi/4$-congruent if and only if $nk^2$ is $\pi/4$-congruent. 
We assume henceforth that $n$ is square-free. 

Let $E_1$ be the elliptic curve of conductor $128$ given by the Weierstrass form: $y^2=x^3+2x^2-x$. We have the following equivalent notion of $\pi/4$-congruence in terms of quadratic twists of $E_1$. 

\begin{lemma}\label{ellcurve_pi}
A square-free positive integer $n$ is 
$\pi/4$-congruent if and only if the 
elliptic curve $E_n : y^2 = x^3 + 2nx^2 - n^2x$ has a rational point $(x,y)$ with $y\neq 0$. 
\end{lemma}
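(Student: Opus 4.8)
The plan is to use the reformulation established just above: $n$ is $\pi/4$-congruent if and only if there exist positive rationals $a,b,c$ with $c^2 = a^2 + 2b^2 - 2ab$ and $ab = 2n$. I would then exhibit explicit mutually inverse rational maps between such triangles and the rational points of $E_n$ away from $y=0$, and finally invoke the group structure of $E_n$ to arrange the signs needed for positivity.

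For the forward direction, given positive rationals $a,b,c$ with $c^2 = a^2+2b^2-2ab$ and $ab=2n$, I would set
\[
x = \frac{a(a-b+c)}{2}, \qquad y = \frac{a^2(a-b+c)}{2},
\]
and check by direct substitution (using $n = ab/2$ and $c^2 = a^2+2b^2-2ab$) that $y^2 = x^3 + 2nx^2 - n^2 x$. Since $c^2 = (a-b)^2 + b^2 > (a-b)^2$ forces $c > |a-b|$, the factor $a-b+c$ is positive, so $x,y>0$; in particular $y\neq 0$. This is the easy direction.

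For the converse, given a rational point $(x,y)$ on $E_n$ with $y \neq 0$ (hence $x\neq 0$), I would recover the triangle by the inverse formulas
\[
a = \frac{y}{x}, \qquad b = \frac{2nx}{y}, \qquad c = \frac{x^2+n^2}{y}.
\]
These are rational, a one-line computation gives $ab = 2n$, and the relation $c^2 = a^2+2b^2-2ab$ follows after clearing denominators and substituting the curve equation $y^2 = x^3+2nx^2-n^2x$. The only remaining issue is positivity: $a,b,c$ are simultaneously positive precisely when $x>0$ and $y>0$.

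The main obstacle is exactly this sign normalization, which I would resolve using the $\Q$-rational symmetries of $E_n$. Negation $(x,y)\mapsto(x,-y)$ lets me assume $y>0$. If additionally $x<0$, I translate by the rational $2$-torsion point $(0,0)$; since $E_n:\ y^2 = x(x^2+2nx-n^2)$ has nonzero roots whose product is $-n^2$, this translation sends $x \mapsto -n^2/x > 0$ and keeps $y\neq 0$ (translating by a $2$-torsion point preserves the property of not being $2$-torsion, and the only rational $2$-torsion here is $(0,0)$). After a further negation if needed, I obtain a rational point with $x>0$ and $y>0$, to which the inverse formulas apply and return positive $a,b,c$. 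Finally, since $0<\pi/4<\pi$, any positive $a,b,c$ satisfying the two relations genuinely realize a triangle with sides $a$, $b\sqrt{2}$, $c$, angle $\pi/4$ opposite $c$, and area $n$, completing the equivalence.
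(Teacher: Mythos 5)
Your proposal is correct and takes essentially the same approach as the paper: your forward map $x=a(a-b+c)/2$, $y=ax$ is exactly the paper's $x=(ac+a^{2}-2n)/2$, $y=a(ac+a^{2}-2n)/2$ rewritten via $ab=2n$, and your inverse formulas agree with the paper's $a=(x^{2}+2nx-n^{2})/y$, $b=2nx/y$, $c=(x^{2}+n^{2})/|y|$ once the curve equation is used to identify $y/x$ with $(x^{2}+2nx-n^{2})/y$. The only (harmless) difference is in the sign normalization: your translation by the $2$-torsion point $(0,0)$ is superfluous, because on the curve $x$ and $x^{2}+2nx-n^{2}$ automatically share a sign (their product is $y^{2}>0$), so flipping the sign of $y$ alone already yields positive $a,b,c$, which is what the paper does.
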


\begin{proof}
Suppose $n$ is $\pi/4$-congruent. Let 
$a$, $b$ and $c$ be positive rationals such 
that 
$c^2 = a^2 + 2b^2 - 2ab$ and $2n = ab$.
Substitute $b = 2n/a$ in the first equation and multiply by $a^2$ to obtain 
$(ac)^2 = (a^2 -2n)^2 + 4n^2$. 
In particular, $ca + a^2 -2n \neq 0$.
Put $x = (ac + a^2 -2n)/2$ and 
$y=a(ac + a^2 - 2n)/2$. Then 
$(x,y)$ is a rational point on $E_n$, with $y \neq 0$.

Conversely, suppose $(x,y)$ is a rational 
point on $E_n$ with $y \neq 0$. Changing 
the sign of $y$, if necessary, we observe that 
$a = (x^2 + 2nx - n^2)/y > 0$, 
$b = 2nx/y > 0$, and 
$c = (x^2 + n^2)/|y|$ satisfy the 
required relation.
\end{proof}

We can also define the notion of a 
$3 \pi /4$-congruent number. 
A positive integer $n$ is said to be 
$3 \pi /4$-congruent if there exists a 
triangle with sides $a$, $b \sqrt{2}$, 
$c$ and angle $3 \pi/4$ opposite to the 
side $c$, where $a$, $b$, and $c$ are positive rationals and the area is $n$; that is, 
\begin{equation}
c^2 = a^2 + 2b^2 + 2ab \quad \mbox{ and } 
\quad 2n = ab.
\end{equation} 
Equivalently, $n$ is $3 \pi /4$-congruent 
if there exist positive rational numbers $a,b,c$ such that 
\begin{equation}
c^2 = a^2 + 2b^2 + 2ab \quad \mbox{ and } 
\quad 2nab = 1.
\end{equation} 

We have the following analogue of the 
last lemma for the $3 \pi / 4$ case.

\begin{lemma}\label{ellcurve_3pi}
A square-free positive integer $n$ is 
$3\pi/4$-congruent if and only if the 
elliptic curve $E_{-n} : y^2 = x^3 - 2n x^2 - n^2 x$ has a rational point $(x,y)$ with $y\neq 0$. 
\end{lemma}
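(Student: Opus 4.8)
The plan is to mirror almost exactly the proof of Lemma~\ref{ellcurve_pi}, changing only the sign that distinguishes the $3\pi/4$ case ($+2ab$ instead of $-2ab$), which will produce the curve $E_{-n}$ with $-2n$ in the $x^2$-coefficient. First I would record the defining relations: $n$ is $3\pi/4$-congruent if and only if there are positive rationals $a,b,c$ with $c^2 = a^2 + 2b^2 + 2ab$ and $2n = ab$. Substituting $b = 2n/a$ and clearing denominators by multiplying through by $a^2$, the right-hand side becomes $a^4 + 2(2n)^2 + 2a^2(2n) = (a^2 + 2n)^2 + 4n^2$, so that $(ac)^2 = (a^2 + 2n)^2 + 4n^2$. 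The key sign flip from the $\pi/4$ computation is that the perfect-square term is now $(a^2+2n)^2$ rather than $(a^2-2n)^2$; the additive constant $4n^2$ is unchanged.

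Next I would guess the point on $E_{-n}$ by analogy with Lemma~\ref{ellcurve_pi}, replacing $n$ by $-n$ wherever it appears as a coefficient. Concretely, set $x = (ac + a^2 + 2n)/2$ and $y = a\,(ac + a^2 + 2n)/2$, and verify by direct substitution that $(x,y)$ satisfies $y^2 = x^3 - 2nx^2 - n^2 x$. This is a routine algebraic check using the relation $(ac)^2 = (a^2+2n)^2 + 4n^2$ derived above; since $a,c > 0$ we have $ac + a^2 + 2n > 0$, hence $x \neq 0$ and $y \neq 0$. For the converse, given a rational point $(x,y)$ on $E_{-n}$ with $y \neq 0$, I would (after replacing $y$ by $-y$ if necessary to fix signs) set
\[
a = \frac{x^2 - 2nx - n^2}{y}, \qquad b = \frac{2nx}{y}, \qquad c = \frac{x^2 + n^2}{|y|},
\]
and check that these are positive rationals satisfying $c^2 = a^2 + 2b^2 + 2ab$ and $2n = ab$. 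Again this amounts to a direct but tedious substitution using the curve equation $y^2 = x(x^2 - 2nx - n^2)$.

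The only genuine subtlety, and the step I expect to require the most care, is the positivity of $a,b,c$ in the converse direction. In the $\pi/4$ proof the same claim is asserted rather tersely; here one must confirm that after a suitable sign choice for $y$, all three of the displayed quantities are simultaneously positive. The argument should parallel the $\pi/4$ case: a rational point with $y \neq 0$ determines $b = 2nx/y$, and one uses the structure of the cubic to show $x$ and $y$ can be arranged to have the appropriate signs so that $a,b,c > 0$. Everything else is symbolic verification of polynomial identities, so the proof will be short; I would present it in the same compact style as Lemma~\ref{ellcurve_pi}, displaying the explicit forward substitution and then stating the converse formulas with the positivity observation.
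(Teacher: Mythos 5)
Your proposal is correct and is exactly the argument the paper intends: the paper states Lemma~\ref{ellcurve_3pi} without proof as the evident analogue of Lemma~\ref{ellcurve_pi}, and your sign-flipped substitutions (the identity $(ac)^2=(a^2+2n)^2+4n^2$, the point $x=(ac+a^2+2n)/2$, $y=ax$, and the converse formulas with $a=(x^2-2nx-n^2)/y$, $b=2nx/y$, $c=(x^2+n^2)/|y|$) all check out, including the positivity step, since $y^2=x(x^2-2nx-n^2)$ forces $x$ and $x^2-2nx-n^2$ to share a sign, so one sign choice of $y$ makes $a$ and $b$ simultaneously positive.
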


The elliptic curve $E_1$ does not have complex multiplication and has torsion subgroup $\Z/2\Z$ over $\Q$. We note the following useful lemma for torsion subgroups of $E_n$.

\begin{lemma}\label{lem:tor}
For all non-zero integers $n$, the Mordell-Weil group 
$E_{n}(\mathbb{Q})$ of the quadratic twist $E_n$ has torsion subgroup $\mathbb{Z}/{2 \mathbb{Z}}$.
\end{lemma}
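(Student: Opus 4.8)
The plan is to determine $E_n(\mathbb{Q})_{\mathrm{tors}}$ by first pinning down its $2$-part and then eliminating every other prime factor via Mazur's classification. First I would record that $(0,0)$ is a rational point of order $2$, so $\mathbb{Z}/2\mathbb{Z} \subseteq E_n(\mathbb{Q})_{\mathrm{tors}}$. Factoring the cubic as $x(x^2 + 2nx - n^2)$, the two remaining roots are $-n \pm n\sqrt{2}$, which are irrational for every $n \neq 0$; hence $(0,0)$ is the \emph{only} rational $2$-torsion point, $E_n[2](\mathbb{Q}) = \mathbb{Z}/2\mathbb{Z}$, and in particular $E_n(\mathbb{Q})_{\mathrm{tors}}$ contains no copy of $\mathbb{Z}/2\mathbb{Z} \times \mathbb{Z}/2\mathbb{Z}$. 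By Mazur's theorem the torsion is therefore cyclic of even order $N \in \{2,4,6,8,10,12\}$, and it suffices to rule out points of order $3$, $4$, and $5$.

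For order $4$: if $4 \mid N$ then the unique $2$-torsion point $(0,0)$ lies in $2E_n(\mathbb{Q})$. Writing a putative halving point as $Q = (u,v)$ and using that $2Q = (0,0)$ forces the tangent line at $Q$ to pass through $(0,0)$, the relation $2v^2 = u f'(u)$ combined with $v^2 = f(u)$, where $f(x)=x^3+2nx^2-n^2x$, collapses to $u(u^2 + n^2) = 0$. Since $u \neq 0$ this would require $u^2 = -n^2$, which is impossible over $\mathbb{Q}$. Hence $(0,0) \notin 2E_n(\mathbb{Q})$ and $4 \nmid N$, eliminating $N \in \{4,8,12\}$.

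For odd torsion I would argue uniformly through division polynomials, using homogeneity in $n$ (equivalently, that $E_n$ is the quadratic twist of $E_1$ by $n$, so that the substitution $x = n\xi$ identifies the division polynomials up to a power of $n$): the $x$-coordinate of any rational $m$-torsion point of $E_n$ is $n$ times a rational root of the $m$-division polynomial $\psi_m$ of $E_1$. For $m=3$ this is explicit, since the $3$-division polynomial of $E_n$ is $3x^4 + 8nx^3 - 6n^2x^2 - n^4 = n^4\,(3\xi^4 + 8\xi^3 - 6\xi^2 - 1)$, and the quartic $3\xi^4 + 8\xi^3 - 6\xi^2 - 1$ has no rational root (the candidates $\pm 1,\ \pm \tfrac13$ all fail); thus $E_n$ has no point of order $3$, removing $N=6$. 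For $m=5$ the same principle applies: a rational $5$-torsion point of $E_n$ would force the (fixed) degree-$12$ polynomial $\psi_5$ of $E_1$ to have a rational root, and I would verify by direct computation that it has none, removing $N=10$. With $3,4,5 \nmid N$, the only surviving value is $N=2$, so $E_n(\mathbb{Q})_{\mathrm{tors}} \cong \mathbb{Z}/2\mathbb{Z}$.

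The main obstacle is the $m=5$ step: unlike $\psi_3$, the relevant division polynomial of $E_1$ has degree $12$, so confirming the absence of a rational root is the one genuinely computational point. A convenient cross-check, and an alternative that sidesteps the degree-$12$ polynomial, is reduction modulo small primes: since $\#E_n(\mathbb{F}_p) = p + 1 - \bigl(\tfrac{n}{p}\bigr)\,a_p(E_1)$ for odd $p \nmid n$, the values $a_3(E_1) = 2$ and $a_7(E_1) = -4$ give $\#E_n(\mathbb{F}_3) \in \{2,6\}$ and $\#E_n(\mathbb{F}_7) \in \{4,12\}$, neither divisible by $5$; this already excludes $5$-torsion whenever $3 \nmid n$ or $7 \nmid n$, with the remaining $n$ handled by any further prime $p$ satisfying $a_p(E_1) \not\equiv \pm(p+1) \pmod 5$.
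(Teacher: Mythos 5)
Your proof is correct, and while its skeleton matches the paper's (lift the $\mathbb{Z}/2\mathbb{Z}$ from $(0,0)$, invoke Mazur, then kill orders $3$, $4$, $5$), two steps are done genuinely differently. For order $4$, the paper factors the $4$-division polynomial into irreducibles; you instead run a direct halving argument, showing $(0,0)\notin 2E_n(\mathbb{Q})$ via the tangent-line condition $uf'(u)=2f(u)$, which collapses to $u(u^2+n^2)=0$ --- I checked this identity and it is right, and it is arguably cleaner since it simultaneously disposes of $N\in\{4,8,12\}$ without any factorization. You also make explicit something the paper elides: the paper states the $3$-, $4$- and $5$-division polynomials in their $n=1$ form (e.g.\ ``the $3$-division polynomial of $E_n$ is $3x^4+8x^3-6x^2-1$''), which is literally true only for $n=\pm 1$; your substitution $x=n\xi$ turning $3x^4+8nx^3-6n^2x^2-n^4$ into $n^4(3\xi^4+8\xi^3-6\xi^2-1)$ is exactly the justification needed, and noting that only the absence of a \emph{rational root} (not full irreducibility, which the paper asserts) is required is a mild economy; the same remark applies at $m=5$, where the rational root test on the degree-$12$ polynomial only involves the candidates $\pm 1,\pm\tfrac15$, so your ``direct computation'' is in fact trivial. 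Your mod-$p$ alternative for excluding $5$-torsion is a nice independent route the paper does not take, but as stated it has a loose end: for $n$ divisible by both $3$ and $7$ you defer to ``any further prime $p$ with $a_p\not\equiv\pm(p+1)\pmod 5$'' without exhibiting one or proving such primes exist for every $n$ (e.g.\ $p=13$ with $a_{13}=2$ gives $\#E_n(\mathbb{F}_{13})\in\{12,16\}$ and handles $21\mid n$, $13\nmid n$; covering all $n$ at once needs infinitely many such primes, which follows from $E_1$ having no rational $5$-isogeny but is not argued). Since the division-polynomial route already closes the order-$5$ case, this does not affect correctness --- treat the reduction argument as the cross-check you label it.
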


\begin{proof}
The rational $2$-torsion subgroup of $E_n$ is $\mathbb{Z}/{2 \mathbb{Z}}$, so 
Mazur's theorem implies that the 
torsion subgroup of 
$E_{n}(\mathbb{Q})$ is $\mathbb{Z}/{2m \mathbb{Z}}$, where 
$m$ is either $1,2,3,5$ or $6$. 

The $3$-division polynomial of $E_n$ is given by $3x^4 + 8x^3 - 6x^2 - 1$, which 
is irreducible over $\mathbb{Q}$.
Thus, $E_{n}(\mathbb{Q})$ has no element 
of order $3$. Consequently, 
the torsion subgroup is  
neither $\mathbb{Z}/{6 \mathbb{Z}}$ 
nor $\mathbb{Z}/{12 \mathbb{Z}}$.

The $4$-division polynomial of $E_n$ is given by $x(x^2+1)(x^2+2x-1)(x^4 + 4x^3 - 6x^2 -4x +1)$, 
each factor being irreducible in  
$\mathbb{Q}[x]$. This only leads to the 
point $(0,0)$ of order $2$ and we have no 
rational point of order $4$. 

Finally, the $5$-division polynomial of $E_n$ is given by
$5x^{12} + 40x^{11} + 2x^{10} - 160x^9 - 105x^8 - 720x^7 - 660x^6 + 224x^5 + 515x^4 - 280x^3 + 50x^2 + 1$, which is 
irreducible over $\mathbb{Q}$. Thus, 
$E_{n}(\mathbb{Q})$ has no element 
of order $5$. This completes the proof.
\end{proof}

The last three lemmas imply the following result.

\begin{theorem}\label{congruence_MWrank}
A square-free integer $n>1$ is $\pi/4$-congruent 
(resp.\ $3 \pi/4$-congruent)
if and only if the Mordell-Weil group 
$E_n(\mathbb{Q})$ 
(resp.\ $E_{-n}(\mathbb{Q})$) 
has positive rank.  
\end{theorem}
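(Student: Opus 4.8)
The plan is to reduce the theorem to a statement purely about the Mordell--Weil group by invoking the three preceding lemmas. By Lemma~\ref{ellcurve_pi} (resp.\ Lemma~\ref{ellcurve_3pi}), the integer $n$ is $\pi/4$-congruent (resp.\ $3\pi/4$-congruent) if and only if $E_n$ (resp.\ $E_{-n}$) carries a rational point $(x,y)$ with $y \neq 0$. It therefore suffices to prove, for each nonzero integer $m$, that $E_m(\Q)$ has a rational point with $y \neq 0$ if and only if $E_m(\Q)$ has positive rank. I will only treat $E_n$; the argument for $E_{-n}$ is verbatim the same, since $E_{-n}$ is exactly the curve $E_m$ with $m = -n$ (note $m^2 = n^2$), and Lemma~\ref{lem:tor} applies to all nonzero integers.

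First I would pin down the torsion. By the Mordell--Weil theorem $E_n(\Q) \cong \Z^{r} \oplus E_n(\Q)_{\mathrm{tors}}$, and Lemma~\ref{lem:tor} gives $E_n(\Q)_{\mathrm{tors}} \cong \Z/2\Z$, generated by the $2$-torsion point $(0,0)$. Thus the torsion subgroup consists of exactly two elements: the identity $O$ and $(0,0)$. I would then observe that the affine rational points with $y=0$ are the rational roots of $x(x^2 + 2nx - n^2)=0$; since the quadratic factor has roots $-n(1\mp\sqrt{2})$, which are irrational, the only rational point with $y=0$ is $(0,0)$.

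With this in hand the equivalence is immediate in both directions. If $E_n(\Q)$ has positive rank, choose a point $P$ of infinite order; being non-torsion, $P$ is neither $O$ nor $(0,0)$, and since $(0,0)$ is the unique affine rational point with vanishing $y$-coordinate, $P$ must have $y \neq 0$. Conversely, a rational point $(x,y)$ with $y\neq0$ is distinct from both $O$ and $(0,0)$, hence lies outside the two-element torsion subgroup; it therefore has infinite order, forcing $r \geq 1$. Combining this equivalence with Lemmas~\ref{ellcurve_pi} and~\ref{ellcurve_3pi} yields the theorem.

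I do not expect any genuine obstacle here: the entire content has been front-loaded into the three lemmas, and the remaining work is the elementary observation that ``$y \neq 0$'' and ``non-torsion'' coincide for these curves. The only point demanding a line of care is the identification of the torsion subgroup with $\{O,(0,0)\}$, that is, checking that the curve has no further rational point with $y=0$; this is precisely the irrationality of the roots of the quadratic factor noted above, which rules out any additional rational $2$-torsion.
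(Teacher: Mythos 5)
Your proposal is correct and follows exactly the route the paper intends: the paper gives no written proof at all, stating only that ``the last three lemmas imply the following result,'' and your argument is precisely the routine filling-in of that implication --- combining Lemmas~\ref{ellcurve_pi} and~\ref{ellcurve_3pi} with Lemma~\ref{lem:tor} and the observation that $(0,0)$ is the only rational point with $y=0$ (the quadratic factor having irrational roots $n(-1\pm\sqrt{2})$, resp.\ $n(1\pm\sqrt{2})$), so that ``$y\neq 0$'' coincides with ``infinite order.''
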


\section{$\pi/4$- and $3\pi/4$-congruent numbers with arbitrarily many prime factors}\label{Gen_Birch_Lemma}

In this section, we present the consequence of the main result 
of Shu and Zhai \cite{SZ} to 
obtain examples of $\pi/4$- and 
$3\pi/4$-congruent numbers with 
arbitrarily many prime factors.  

If $C$ is an elliptic curve over 
$\mathbb{Q}$ of conductor $N$, we let 
$f : X_0(N) \rightarrow C$ be an optimal modular 
parametrization sending the cusp 
$[\infty]$ at infinity to the zero 
element of $C$. Note that $f([0])$ is a 
torsion point in $C(\mathbb{Q})$. 

We define $C'/\mathbb{Q}$ to be the 
quotient curve $C' = C/C[2](\mathbb{Q})$. 
Let (Tor) be the following condition:
\[ C[2](\mathbb{Q}) = C'[2](\mathbb{Q}) = \mathbb{Z}/{2 \mathbb{Z}}. \]
Thus the fields $\mathbb{Q}(C[2])$ and 
$\mathbb{Q}(C'[2])$ are quadratic extensions of $\mathbb{Q}$.

A prime $q$ is said to be 
\emph{admissible} for $C$ if $(q, 2N)=1$ 
and $q$ is inert in both the quadratic 
fields $\mathbb{Q}(C[2])$ and 
$\mathbb{Q}(C'[2])$.

For a prime $p > 3$ such that 
$p \equiv 3 \pmod{4}$, we say that $p$ satisfies the 
\emph{Heegner hypothesis} for $(C,\mathbb{Q}(\sqrt{-p}))$ 
if every prime $\ell$ dividing $N$ 
splits in $\mathbb{Q}(\sqrt{-p})$.
 
For any odd prime $q$, put 
$q^* = \left( \frac{-1}{q} \right) q$. 
 
\begin{theorem}[\cite{SZ}, Theorem 1.2]\label{thm:Birchlem}
Let $C$ be an elliptic curve over 
$\mathbb{Q}$ satisfying $f([0]) \not\in 2C(\mathbb{Q})$ 
and condition (Tor). 
Let $p > 3$ be a prime 
$p \equiv 3 \pmod{4}$ satisfying the 
Heegner hypothesis for $(C,\mathbb{Q}(\sqrt{-p}))$.  
For any integer $r \geq 0$, let 
$q_1, \ldots, q_r$ be distinct admissible 
primes which are not equal to $p$, and 
put $M = q_1^* \cdots q_r^*$. Then we have 
\[ \mathrm{ord}_{s=1} L(C_M, s) = 
\mathrm{rank}_{\mathbb{Q}} (C_M) = 0 \]
and 
\[ \mathrm{ord}_{s=1} L(C_{-pM}, s) = 
\mathrm{rank}_{\mathbb{Q}} (C_{-pM}) = 1, \]
where $C_m$ denotes the $m$-th quadratic twist of $C$.
Moreover, the Tate-Shafarevich groups 
$\Sha(C_M/\mathbb{Q})$ and 
$\Sha(C_{-pM}/\mathbb{Q})$ are both finite.
\end{theorem}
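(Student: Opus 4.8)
The plan is to prove this through the theory of Heegner points combined with the Gross--Zagier and Kolyvagin theorems, organized as an induction on the number $r$ of admissible primes. The base case $r=0$ (where $M=1$, so that $C_M=C$ and $C_{-pM}=C_{-p}$) carries the analytic heart of the argument, and the inductive step propagates the conclusions from a twisting parameter $M$ to $Mq^*$ for a further admissible prime $q$. Throughout, the role of the two hypotheses is to guarantee that a certain Heegner point is \emph{indivisible by $2$}, which is what converts the analytic input into rank and $\Sha$ statements.

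First I would treat the base case. Since $p\equiv 3\pmod 4$ satisfies the Heegner hypothesis for $(C,\mathbb{Q}(\sqrt{-p}))$, every prime dividing the conductor $N$ splits in $K=\mathbb{Q}(\sqrt{-p})$, so one may choose an ideal $\mathfrak{n}\subset\mathcal{O}_K$ with $\mathcal{O}_K/\mathfrak{n}\cong\mathbb{Z}/N\mathbb{Z}$ and form the Heegner point $y_K=f(x_{\mathfrak{n}})\in C(K)$ using the optimal parametrization $f:X_0(N)\to C$. Its projection to the $(-1)$-eigenspace of complex conjugation gives a point $P\in C_{-p}(\mathbb{Q})$ up to isogeny. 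The crucial step is to show $P$ is non-torsion; this is exactly where $f([0])\not\in 2C(\mathbb{Q})$ and condition (Tor) enter. Following the idea of Birch's lemma, I would compute the image of $y_K$ modulo $2$ in terms of $f([0])$ and the ideal class structure of $K$, and the two torsion conditions force this image to be nonzero, so $P$ is in fact indivisible by $2$. By the Gross--Zagier formula the non-vanishing of $P$ yields $L'(C_{-p},1)\neq 0$, hence $\mathrm{ord}_{s=1}L(C_{-p},s)=1$, and Kolyvagin's theorem then gives $\mathrm{rank}_{\mathbb{Q}}C_{-p}=1$ together with finiteness of $\Sha(C_{-p}/\mathbb{Q})$. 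For $C$ itself, the rank-$0$ assertion follows by bounding $\mathrm{Sel}_2(C/\mathbb{Q})$: condition (Tor) pins down the fields $\mathbb{Q}(C[2])$ and $\mathbb{Q}(C'[2])$, and an explicit $2$-descent shows the $2$-Selmer group attains its minimal size, whence $\mathrm{rank}_{\mathbb{Q}}C=0$ and, by Kolyvagin's rank-$0$ criterion, $\mathrm{ord}_{s=1}L(C,s)=0$.

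Next I would carry out the induction. Assume the conclusions hold for a squarefree product $M$ of admissible $q_i^*$, and let $q$ be one more admissible prime. Admissibility, namely $(q,2N)=1$ and $q$ inert in both $\mathbb{Q}(C[2])$ and $\mathbb{Q}(C'[2])$, is precisely the condition making a \emph{level-raising} congruence available at $q$ that relates the mod-$2$ data (equivalently, the relevant Heegner points) of the $M$-twist and the $Mq^*$-twist. The content of the step is to track the $2$-adic valuation of the Heegner point across this congruence: one shows that twisting by $q^*$ neither destroys the indivisibility of the Heegner point on the $-pM$ side nor enlarges the $2$-Selmer group on the $M$ side. Concretely, because $q$ is inert in the two quadratic fields, the local conditions at $q$ contribute trivially to $\mathrm{Sel}_2$, so the global $2$-Selmer ranks are preserved; feeding this back into Gross--Zagier and Kolyvagin exactly as in the base case gives $\mathrm{rank}_{\mathbb{Q}}C_M=0$ and $\mathrm{rank}_{\mathbb{Q}}C_{-pM}=1$ with finite Tate--Shafarevich groups, completing the induction.

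I expect the main obstacle to be the inductive control of the $2$-adic behaviour of the Heegner point under level raising, that is, proving that the Heegner point attached to the $-pMq^*$ twist remains indivisible by $2$ given indivisibility for the $-pM$ twist. This is the one place where the admissibility hypothesis is genuinely needed and where a soft argument does not suffice: it requires an explicit comparison of the local terms at $q$ (an explicit reciprocity computation) rather than a dimension count. Once this indivisibility is secured, the $L$-value, rank, and finiteness-of-$\Sha$ statements all follow formally from Gross--Zagier and Kolyvagin.
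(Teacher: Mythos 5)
First, a point of comparison: the paper does not prove this theorem at all --- it is quoted verbatim from Shu--Zhai \cite{SZ}, so your attempt can only be measured against the strategy of that paper. Your overall architecture (Heegner point attached to $K=\mathbb{Q}(\sqrt{-p})$, $2$-indivisibility forced by $f([0])\not\in 2C(\mathbb{Q})$ and (Tor) via a generalized Birch lemma, induction over the admissible primes $q_1,\ldots,q_r$, then Gross--Zagier and Kolyvagin) is indeed the right family of ideas and matches the Shu--Zhai/Tian induction method in spirit.

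However, your base case contains a genuine gap that would recur at every inductive stage: you derive $\mathrm{rank}_{\mathbb{Q}}(C_M)=0$ from a $2$-descent bound on $\mathrm{Sel}_2$ and then invoke ``Kolyvagin's rank-$0$ criterion'' to conclude $\mathrm{ord}_{s=1}L(C_M,s)=0$. Kolyvagin's theorem runs in one direction only, from $L(C_M,1)\neq 0$ to rank $0$ and finite $\Sha$; there is no unconditional converse (that implication is a case of BSD, and the known converse theorems carry hypotheses --- in particular at the prime $2$, which is exactly the prime in play here --- that are not available). The correct mechanism, and the one the cited proof uses, extracts \emph{both} analytic statements from the single Heegner point: non-torsionness (indeed $2$-indivisibility) of $y_K$ gives $\mathrm{ord}_{s=1}L(C_M/K,s)=1$ by Gross--Zagier; the factorization $L(C_M/K,s)=L(C_M,s)\,L(C_{-pM},s)$ together with the root numbers $W(C_M)=+1$, $W(C_{-pM})=-1$ forces the orders of vanishing to be even and odd respectively, so the only way they can sum to $1$ is $0+1$. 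Kolyvagin then yields both rank statements and the finiteness of both Tate--Shafarevich groups simultaneously; no separate Selmer computation is needed for the analytic rank-$0$ claim, and none could supply it. A secondary inaccuracy: your inductive step is framed as a level-raising congruence at $q$, whereas the actual induction compares Heegner points over ring class fields of conductor dividing $q_1\cdots q_r$ via trace relations and explicit mod-$2$ Galois/genus-theory computations, with admissibility (inertness of $q$ in $\mathbb{Q}(C[2])$ and $\mathbb{Q}(C'[2])$) guaranteeing the local terms vanish; once the new Heegner point is again shown indivisible by $2$, the same factorization argument closes each stage, so the invalid converse step is never needed.
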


We apply the above theorem to the elliptic curve 
$D : y^2 = x^3 - 4x^2 + 8x$ of conductor $128$ with Cremona label 128D1. 
Note that $D(\Q)$ has torsion subgroup $\Z/2\Z$,  hence for 
the modular parametrization $f : X_0(128) \rightarrow D$, $f([0])=(0,0)$ is a point of order $2$ and so $f([0])\not\in 2D(\mathbb{Q})$. Also, $D'=D/D[2](\mathbb{Q})$ is 
the elliptic curve $E_1 : y^2=x^3+2x^2-x$ with torsion subgroup $\Z/2\Z$. 
Thus, $D$ satisfies the hypothesis of the theorem. 


We have $\mathbb{Q}(D[2]) = \mathbb{Q}(\sqrt{-1})$ and 
$\mathbb{Q}(E_1[2]) = \mathbb{Q}(\sqrt{2})$. 
An odd prime $q$ is inert in both $\mathbb{Q}(\sqrt{2})$ 
and $\mathbb{Q}(\sqrt{-1})$ if and only if $q \equiv 3 \pmod{8}$. 
That is, any prime $q \equiv 3 \pmod{8}$ is admissible for $D$.
Further, any prime $p>3$ such that $p \equiv 7 \pmod{8}$ satisfies the Heegner hypothesis for 
$(D, \mathbb{Q}(\sqrt{-p}))$. Theorem~\ref{thm:Birchlem} and the fact that $D$ and $E_1$ are $2$-isogenous give the following result. 
\begin{corollary}
Let $p$ be a prime with $p \equiv 7 \pmod{8}$. 
For any integer $r \geq 0$, let 
$q_1, \ldots, q_r$ be distinct primes which are not equal to $p$
with $q_j \equiv 3 \pmod{8}$. 
Put $M = (-1)^r q_1 \cdots q_r$. 
Then we have 
\[ \mathrm{ord}_{s=1} L(E_M, s) = 
\mathrm{rank}_{\mathbb{Q}} (E_M) = 0 \]
and 
\[ \mathrm{ord}_{s=1} L(E_{-pM}, s) = 
\mathrm{rank}_{\mathbb{Q}} (E_{-pM}) = 1. \]
Consequently, 
\begin{enumerate}
\item If $r$ is even, then $q_1 \cdots q_r$ is not $\pi/4$-congruent.  
\item If $r$ is odd, then $q_1 \cdots q_r$ is not $3\pi/4$-congruent.  
\item If $r$ is even, then $pq_1 \cdots q_r$ is $3\pi/4$-congruent. 
\item If $r$ is odd, then $pq_1 \cdots q_r$ is $\pi/4$-congruent.
\end{enumerate}
\end{corollary}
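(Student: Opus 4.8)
The plan is to apply Theorem~\ref{thm:Birchlem} directly to the curve $D : y^2 = x^3 - 4x^2 + 8x$, whose hypotheses have just been verified in the surrounding text, and then transport the rank information from the twists of $D$ to the twists of $E_1$ via the $2$-isogeny between them. First I would fix a prime $p \equiv 7 \pmod 8$ and admissible primes $q_1,\dots,q_r \equiv 3 \pmod 8$, all distinct and different from $p$, and set $M = q_1^* \cdots q_r^*$. Since $p>3$, for a prime $q \equiv 3 \pmod 8$ one has $q \equiv 3 \pmod 4$, so $\left(\frac{-1}{q}\right) = -1$ and hence $q^* = -q$; therefore $q_1^* \cdots q_r^* = (-1)^r q_1 \cdots q_r$, which matches the stated $M$. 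Plugging this $M$ into Theorem~\ref{thm:Birchlem} yields $\mathrm{rank}_{\Q}(D_M) = 0$ and $\mathrm{rank}_{\Q}(D_{-pM}) = 1$, together with the corresponding $L$-function orders.

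\medskip

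The next step is to pass from $D$ to $E_1$. Because $D$ and $E_1 = D'$ are $2$-isogenous over $\Q$, their quadratic twists $D_m$ and $E_m$ are likewise $2$-isogenous over $\Q$ for every squarefree $m$. Isogenous elliptic curves over $\Q$ have the same $L$-function and the same Mordell--Weil rank, so the order-of-vanishing and rank statements for $D_M$ and $D_{-pM}$ transfer verbatim to $E_M$ and $E_{-pM}$. This gives exactly the two displayed equations in the corollary. I would state this transfer carefully but briefly, citing the standard invariance of $L$-functions and rank under isogeny.

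\medskip

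Finally I would read off the four congruence consequences. The sign of $M = (-1)^r q_1 \cdots q_r$ is governed by the parity of $r$: when $r$ is even, $M = q_1 \cdots q_r > 0$ and $-pM = -p\,q_1\cdots q_r$; when $r$ is odd, $M = -q_1\cdots q_r$ so the twist $E_M$ is really $E_{-(q_1\cdots q_r)}$, while $E_{-pM} = E_{p\,q_1\cdots q_r}$. By Theorem~\ref{congruence_MWrank}, a squarefree integer $N>1$ is $\pi/4$-congruent exactly when $E_N$ has positive rank, and $3\pi/4$-congruent exactly when $E_{-N}$ has positive rank. Thus rank $0$ for $E_M$ rules out congruence, while rank $1$ for $E_{-pM}$ forces it. For even $r$: $E_{q_1\cdots q_r}$ has rank $0$, so $q_1\cdots q_r$ is not $\pi/4$-congruent (item (1)), while $E_{-p q_1\cdots q_r}$ has rank $1$, so $p q_1\cdots q_r$ is $3\pi/4$-congruent (item (3)). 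For odd $r$: $E_{-q_1\cdots q_r}$ has rank $0$, so $q_1\cdots q_r$ is not $3\pi/4$-congruent (item (2)), while $E_{p q_1\cdots q_r}$ has rank $1$, so $p q_1\cdots q_r$ is $\pi/4$-congruent (item (4)).

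\medskip

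The one genuinely delicate point, and the step I would be most careful about, is the bookkeeping of signs in $M$ and its interaction with the definitions of $E_n$ versus $E_{-n}$: the products $q_1 \cdots q_r$ are squarefree positive integers as required by Theorem~\ref{congruence_MWrank}, but the twisting parameter $M$ carries a sign $(-1)^r$, and one must match each case of the parity of $r$ to the correct curve ($E_N$ for $\pi/4$, $E_{-N}$ for $3\pi/4$) without conflating the abstract twist index with the congruence label. Everything else is a direct invocation of the two cited theorems.
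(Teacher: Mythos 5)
Your proposal is correct and follows the same route as the paper: apply the Shu--Zhai theorem to $D: y^2 = x^3 - 4x^2 + 8x$ (whose hypotheses, admissible primes $q \equiv 3 \pmod 8$, and Heegner primes $p \equiv 7 \pmod 8$ are exactly as verified in the surrounding text, with $q^* = -q$ giving $M = (-1)^r q_1 \cdots q_r$), transfer the rank and $L$-function statements to $E_M$ and $E_{-pM}$ via the $2$-isogeny, and conclude with Theorem~\ref{congruence_MWrank}. Your sign bookkeeping in the four cases matches the paper's intent exactly.
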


\vskip 2mm

We note one more observation. 
In \cite{ST}, Stroeker and Top studied a family 
of elliptic curves of the form $G_p: y^2 = (x+p)(x^2+p^2)$ which are 
quadratic twists of the curve $G: y^2=(x+1)(x^2+1)$. 
It turns out that our curve $E_2$ is $2$-isogenous to $G$. 
Using $2$-descent, the authors proved (\cite[Theorem 1.3]{ST}) 
that the rank of $G_p(\mathbb{Q})$ is zero for a prime 
$p \equiv \pm 3 \pmod{8}$. 
Consequently, we have the following
\begin{corollary}
For a prime $p \equiv \pm 3 \pmod{8}$, 
$2p$ is not $\pi/4$-congruent.
\end{corollary}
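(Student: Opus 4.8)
The plan is to reduce the statement to the rank computation of Stroeker and Top, using Theorem~\ref{congruence_MWrank} together with the compatibility of quadratic twisting with isogenies. Since a prime $p \equiv \pm 3 \pmod 8$ is odd and distinct from $2$, the integer $2p$ is square-free and exceeds $1$; hence by Theorem~\ref{congruence_MWrank}, $2p$ is $\pi/4$-congruent if and only if $E_{2p}(\mathbb{Q})$ has positive rank. It therefore suffices to prove that $\mathrm{rank}_{\mathbb{Q}}(E_{2p}) = 0$.

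First I would observe that $E_{2p}$ is the quadratic twist of $E_2$ by $p$. Indeed, each $E_n : y^2 = x^3 + 2nx^2 - n^2 x$ is the $n$-th quadratic twist of $E_1$, so the twist of $E_2$ by $p$ is the twist of $E_1$ by $2p$, namely $E_{2p}$; concretely, over $\mathbb{Q}(\sqrt{p})$ the map $(u,v) \mapsto (pu, p\sqrt{p}\,v)$ carries $E_2 : v^2 = u^3 + 4u^2 - 4u$ to $E_{2p} : y^2 = x^3 + 4px^2 - 4p^2 x$.

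Next, I would twist by $p$ the $\mathbb{Q}$-rational $2$-isogeny $E_2 \to G$, where $G : y^2 = (x+1)(x^2+1)$ is the curve recalled above. A $2$-isogeny defined over $\mathbb{Q}$ induces a $\mathbb{Q}$-rational $2$-isogeny between the $p$-th quadratic twists of its source and target; since the twist of $E_2$ by $p$ is $E_{2p}$ and the twist of $G$ by $p$ is exactly Stroeker and Top's curve $G_p : y^2 = (x+p)(x^2+p^2)$, this yields a $\mathbb{Q}$-isogeny $E_{2p} \to G_p$. Isogenous elliptic curves over $\mathbb{Q}$ have equal Mordell--Weil rank, so $\mathrm{rank}_{\mathbb{Q}}(E_{2p}) = \mathrm{rank}_{\mathbb{Q}}(G_p)$.

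Finally, \cite[Theorem~1.3]{ST} asserts that $\mathrm{rank}_{\mathbb{Q}}(G_p) = 0$ whenever $p \equiv \pm 3 \pmod 8$. Combining the three steps gives $\mathrm{rank}_{\mathbb{Q}}(E_{2p}) = 0$, so $E_{2p}(\mathbb{Q})$ has rank zero and $2p$ is not $\pi/4$-congruent. The substantive input---the vanishing of the rank---is furnished entirely by \cite{ST} via $2$-descent; the only point requiring care here is the twist bookkeeping, namely verifying that twisting $G$ by $p$ reproduces precisely the curve $G_p$ and that the $\mathbb{Q}$-isogeny descends to the quadratic twists without leaving the base field.
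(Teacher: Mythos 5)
Your proposal is correct and follows essentially the same route as the paper: the paper likewise deduces the corollary from Stroeker--Top's rank-zero result \cite[Theorem~1.3]{ST} via the observations that $E_2$ is $2$-isogenous to $G: y^2=(x+1)(x^2+1)$ and that $G_p$ is the quadratic twist of $G$ by $p$, combined with Theorem~\ref{congruence_MWrank}. Your only addition is to spell out explicitly the twist bookkeeping (that twisting the isogeny by $p$ stays $\mathbb{Q}$-rational and carries $E_{2p}$ to $G_p$), which the paper leaves implicit; this verification is accurate.
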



\section{Application of Waldspurger's Theorem}\label{Waldspurger}

\subsection{Root numbers}
The root number $W(E/\mathbb{Q})$ gives information on the parity of the 
analytic rank of $E/\mathbb{Q}$. 
We record the following result about  
root numbers $W(E_{\pm n}/\mathbb{Q})$ 
for later use. 
We use Rizzo's tables \cite{Rizzo} to compute these.
\begin{proposition}\label{rootnumbers}
Let $E$ be the elliptic curve 
$y^2 = x^3 + 2x^2 - x$. 
For a positive square-free odd integer $n$ the following hold:
\[ 
W(E_{n}/\mathbb{Q}) =
W(E_{-n}/\mathbb{Q}) =
\begin{cases}
1, &\mbox{if } n \equiv 1, 3 \pmod{8} 
\\
-1, &\mbox{if } n \equiv 5, 7 \pmod{8}.
\end{cases}
\]

\[ 
W(E_{2n}/\mathbb{Q}) =
-W(E_{-2n}/\mathbb{Q}) =
\begin{cases}
1, &\mbox{if } n \equiv 3, 5 \pmod{8} \\
-1, &\mbox{if } n \equiv 1, 7 \pmod{8}.
\end{cases}
\]

\end{proposition}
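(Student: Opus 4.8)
The plan is to compute the global root number as a product of local root numbers, $W(E_{\pm n}/\Q) = W_\infty\cdot\prod_p W_p$, exploiting that $W_\infty = -1$ and that $W_p = 1$ at every prime of good reduction. Since $E$ has conductor $128 = 2^7$, the only places that contribute nontrivially are $p = 2$ and the primes dividing the square-free twisting parameter. I would begin by recording the standard invariants of the twist $E_d : y^2 = x^3 + 2dx^2 - d^2x$, namely $c_4 = 2^4\cdot 7\,d^2$, $c_6 = -2^6\cdot 17\,d^3$ and $\Delta = 2^7 d^6$, which serve both to locate a model that is minimal at $2$ and to feed into Rizzo's tables \cite{Rizzo}.

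For an odd prime $p$ dividing the twisting parameter, $E$ has good reduction at $p$ and the twist is ramified there, so $E_d$ acquires additive reduction of Kodaira type $I_0^*$; the local root number is then $\left(\frac{-1}{p}\right)$, independent of the sign of $d$. Taking the product over the odd primes dividing $n$ yields the Jacobi-symbol factor $\left(\frac{-1}{n}\right)$, equal to $+1$ or $-1$ according as $n \equiv 1$ or $3 \pmod 4$, and this same factor occurs for each of $E_{\pm n}$ and $E_{\pm 2n}$. Collecting these contributions reduces the problem to a single local root number in each case, via $W(E_{\pm n}/\Q) = -\left(\frac{-1}{n}\right) W_2(E_{\pm n})$ and the analogous identity for the twists by $\pm 2n$.

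The heart of the argument is the evaluation of $W_2$ through Rizzo's Table I, for which I would first pass to a $2$-minimal model. For odd $d = \pm n$ the invariants above already have $\bigl(v_2(c_4),v_2(c_6),v_2(\Delta)\bigr) = (4,6,7)$ and cannot be reduced further, whereas for $d = \pm 2n$ one reduction by $u = 2$ is forced, giving $\bigl(v_2(c_4),v_2(c_6),v_2(\Delta)\bigr) = (2,3,1)$. In each situation Rizzo's entry is governed by the unit parts of $c_4$, $c_6$ and $\Delta$ to bounded $2$-adic precision; the decisive one is the unit part of $c_6$, which is congruent to $\mp n \pmod 8$ for $E_{\pm n}$ and likewise for $E_{\pm 2n}$. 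This is exactly the mechanism forcing the answer to depend only on $n \bmod 8$, and running through the classes $n \equiv 1,3,5,7 \pmod 8$ produces the two sign patterns in the statement. The relations $W(E_n)=W(E_{-n})$ and $W(E_{2n})=-W(E_{-2n})$ then reflect the symmetry of each $W_2$-rule under negating the unit part of $c_6$ modulo $8$: the rule governing the $\pm n$ family is invariant under this negation while the rule for the $\pm 2n$ family changes sign, and $\left(\frac{-1}{n}\right)$ is unchanged in both.

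The main obstacle I anticipate is the bookkeeping at $2$: confirming minimality (in particular that no further reduction is possible in the $\pm 2n$ case, where $v_2(\Delta)$ becomes small) and selecting the correct row of Rizzo's table, since several rows attached to these valuation triples are separated by congruence conditions modulo $8$, $16$ or $32$. I would therefore track the unit parts of $c_4$, $c_6$ and $\Delta$ to whatever $2$-adic precision the relevant rows demand, which in practice amounts to computing $n^2$, $n^3$ and $n^6$ modulo a small power of $2$ for each residue class of $n$. Once the correct entries are read off, combining $W_\infty$, the $I_0^*$ contributions at odd primes, and $W_2$ gives the four cases of the proposition.
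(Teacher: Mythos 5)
Your overall route---writing $W(E_d/\Q)$ as $W_\infty\prod_p W_p$ with $W_\infty=-1$, trivial factors at primes of good reduction, local factor $\left(\frac{-1}{p}\right)$ at each odd $p\mid d$ (ramified quadratic twist of good reduction, Kodaira type $I_0^*$), and a table lookup at $p=2$---is exactly the computation behind the paper's proposition, whose entire stated proof is the sentence that Rizzo's tables were used. Your invariants $c_4=2^4\cdot 7\,d^2$, $c_6=-2^6\cdot 17\,d^3$, $\Delta=2^7d^6$ are correct, as is the odd-twist bookkeeping: the triple $(v_2(c_4),v_2(c_6),v_2(\Delta))=(4,6,7)$ is minimal, the reduction to $W(E_{\pm n})=-\left(\frac{-1}{n}\right)W_2(E_{\pm n})$ is right, and the decisive quantity $c_6/2^6=\mp 17n^3\equiv \mp n\pmod 8$ is the correct one.

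The genuine gap is precisely the step you flagged as the anticipated obstacle: for $d=\pm 2n$ the triple is $(v_2(c_4),v_2(c_6),v_2(\Delta))=(6,9,13)$, and this equation is \emph{already minimal} at $2$; the ``forced reduction by $u=2$'' to $(2,3,1)$ does not exist. Two independent reasons: (i) no integral Weierstrass equation over $\Z_2$ has $v_2(c_4)=2$ at all---from $c_4=b_2^2-24b_4$ and $b_2\equiv a_1^2\pmod 4$ one gets $c_4\equiv 1\pmod 8$ when $a_1$ is odd and $v_2(c_4)\geq 3$ when $a_1$ is even; (ii) here $v_2(j)=5\geq 0$, so reduction at $2$ is additive with potentially good reduction, and Ogg's formula $v_2(\Delta_{\min})=f_2+m-1$ with $f_2\geq 2$ forces $v_2(\Delta_{\min})\geq 2$, while good reduction would give $v_2(\Delta_{\min})=0$ and multiplicative reduction is excluded by $v_2(j)\geq 0$; so the value $1$ that scaling by $u=2$ would produce is impossible, and $v_2(\Delta_{\min})=13$. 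Consequently $(2,3,1)$ labels no row of Rizzo's Table III (the table's triples are those of $2$-minimal equations; absence of a triple is what signals non-minimality), and your even-twist lookup would stall or read a wrong row. The repair is to consult the row for $(6,9,13)$, whose congruence conditions involve the unit part $c_6/2^9=\mp 17n^3\equiv \mp n\pmod 8$; that lookup gives $W_2(E_{2n})=-W_2(E_{-2n})=+1$ exactly for $n\equiv 1,3\pmod 8$, which combined with $-\left(\frac{-1}{n}\right)$ recovers the stated sign pattern, just as your (correct) odd-twist half gives $W_2(E_{\pm n})=-1$ iff $n\equiv\pm 1\pmod 8$. So the plan is sound in architecture and repairable, but as written the $\pm 2n$ half of the argument rests on a nonexistent model.
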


We have the following immediate consequence.

\begin{corollary}\label{BSD-root}
Let $n > 1$ be a square-free odd integer. Suppose the Birch and Swinnerton-Dyer conjecture holds. 
\begin{enumerate}
\item If $n \equiv 5, 7 \pmod{8}$ then $n$ is a $\pi/4$- 
and a $3 \pi/4$-congruent number. 
\item If $n \equiv 1, 7 \pmod{8}$ then $2n$ is a $\pi/4$-congruent number.
\item If $n \equiv 3, 5 \pmod{8}$ then $2n$ is a $3 \pi/4$-congruent number.
\end{enumerate}
\end{corollary}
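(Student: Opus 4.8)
The plan is to read the corollary directly off the root number computation in Proposition \ref{rootnumbers}, using two standard facts: the root number controls the parity of the analytic rank, and the Birch and Swinnerton-Dyer conjecture identifies the analytic rank with the Mordell--Weil rank. Concretely, for any elliptic curve $E/\Q$ the functional equation of $L(E,s)$ has sign $W(E/\Q)$, so $\mathrm{ord}_{s=1} L(E,s)$ is odd exactly when $W(E/\Q) = -1$; in that case the analytic rank is at least $1$. Granting BSD, $\mathrm{rank}_{\Q} E(\Q) = \mathrm{ord}_{s=1} L(E,s)$, so a root number of $-1$ forces $E(\Q)$ to have positive rank. Note that the passage from $W(E/\Q)=-1$ to odd analytic rank is itself unconditional (it rests only on the functional equation), and BSD is invoked solely to convert analytic-rank positivity into algebraic-rank positivity.

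With this in hand I would apply Theorem \ref{congruence_MWrank}, which characterizes $\pi/4$- and $3\pi/4$-congruence of a square-free integer $m > 1$ by the positivity of the rank of $E_m(\Q)$ and $E_{-m}(\Q)$, respectively. Since $n$ is odd and square-free, both $n$ and $2n$ are square-free and exceed $1$, so the theorem applies to each, and the task reduces to matching the relevant twist to the correct congruence class modulo $8$.

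For part (1), Proposition \ref{rootnumbers} gives $W(E_n/\Q) = W(E_{-n}/\Q) = -1$ whenever $n \equiv 5,7 \pmod 8$; both $E_n$ and $E_{-n}$ then have positive rank under BSD, so $n$ is simultaneously $\pi/4$- and $3\pi/4$-congruent. For part (2), the curve governing $\pi/4$-congruence of $2n$ is $E_{2n}$, and Proposition \ref{rootnumbers} gives $W(E_{2n}/\Q) = -1$ exactly when $n \equiv 1,7 \pmod 8$. For part (3), the relevant curve is $E_{-2n}$, and the relation $W(E_{2n}/\Q) = -W(E_{-2n}/\Q)$ shows $W(E_{-2n}/\Q) = -1$ precisely when $W(E_{2n}/\Q) = 1$, that is, when $n \equiv 3,5 \pmod 8$; in each case the corresponding twist has positive rank under BSD, yielding the stated congruence property.

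There is no serious obstacle here: once Proposition \ref{rootnumbers} is granted, the only input beyond elementary bookkeeping is the BSD-powered step from analytic-rank parity to algebraic-rank positivity, and the sole point requiring care is keeping track of which sign $-1$ congruence class attaches to $E_{n}$ versus $E_{-n}$ (equivalently, to $\pi/4$ versus $3\pi/4$) and to the factor of $2$ in the $2n$ cases. All the genuine work lives in the root-number tables underlying Proposition \ref{rootnumbers}, which is why the corollary follows immediately.
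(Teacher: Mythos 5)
Your proposal is correct and follows exactly the argument the paper intends: the paper presents the corollary as an ``immediate consequence'' of Proposition~\ref{rootnumbers}, with the unstated steps being precisely your chain (root number $-1$ $\Rightarrow$ odd analytic rank via the functional equation, BSD $\Rightarrow$ positive Mordell--Weil rank, Theorem~\ref{congruence_MWrank} $\Rightarrow$ congruence property), and your bookkeeping of the sign relation $W(E_{2n}/\Q) = -W(E_{-2n}/\Q)$ and of the square-freeness of $n$ and $2n$ is accurate.
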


\subsection{Cusp forms of weight $3/2$ and Tunnell-like formulae}

We use Waldpurger's theorem to obtain Tunnell-like conditions for a square-free $n$ to be $\pi/4$- or $3\pi/4$-congruent. 

We introduce some notations. Let $k$ be an odd integer $\ge 3$  and $N$ be a positive integer such that $4\mid N$. Let $\chi$ be a quadratic character modulo $N$. For a newform (primitive Hecke eigenform) $F$ of weight $k-1$ and level dividing $N/2$ and trivial character, let $S_{k/2}(N, \chi, F)$ denote the subspace of $S_{k/2}(N,\chi)$ consisting of forms that are {\it Shimura-equivalent} to $F$, i.e., the forms $f$ in $S_{k/2}(N,\chi)$ that are eigenforms under half-integral weight Hecke operators $T_{p^2}$ with the same eigenvalues as $F$ under integral weight Hecke operators $T_p$ for almost all odd primes $p$ coprime to $N$. 
Shimura \cite{Shimura} gave the following direct sum decomposition: 
\begin{equation*}
S_{k/2}(N,\chi)=S_0(N,\chi) \oplus \bigoplus_{F} S_{k/2}(N,\chi,F),
\end{equation*}
where $F$ runs through all newforms of weight $k-1$,
level dividing $N/2$ and character $\chi^2$,  
and
the space $S_0(N,\chi)$ is the subspace spanned by 
single-variable theta-series.
In \cite{Purkait}, the second author gave a refined definition of the Shimura-equivalent spaces and used it to give an algorithm to compute these spaces (see \cite[Theorem 5, Corollary 5.2]{Purkait} for more details). 

Waldspurger's theorem relates the critical value of the $\mathrm{L}$-function of the $n$-th quadratic twist of $F$ to
the $n$-th coefficient of modular forms in $S_{k/2}(N, \chi, F)$. We state a particular version of his theorem \cite[Corollary 2] {Waldspurger} below which is sufficient for our purpose.  

\begin{theorem}[Waldspurger]\label{thm:wald}
Let $F$ be a newform of weight $k-1$, level $M$ and trivial character and let $16\mid M$. Let $f(z) = \sum_{n=1}^{\infty} a_n q^n \in S_{k/2} (N, \chi, F)$ for some $N \geq 1$ 
such that $M$ divides $N/2$ and $\chi$ be a quadratic character modulo $N$. 
Let $\chi_0 (n) := \chi(n) \left( \frac{-1}{n}\right)^{(k-1)/2}$. 
Suppose that $n_1, n_2$ are positive square-free integers such that $n_1/n_2 \in (\mathbb{Q}_{p}^{\times})^{2}$ for all 
$p \mid N$. Then we have the following relation: 
\[ 
a_{n_1}^2 L\left(F \otimes \chi_0^{-1} \left( \frac{n_2}{\cdot}\right), \frac{k-1}{2} \right) \chi(n_2/n_1) n_2^{k/2 - 1} 
= 
a_{n_2}^2 L\left(F \otimes \chi_0^{-1} \left( \frac{n_1}{\cdot}\right), \frac{k-1}{2} \right)  n_1^{k/2 - 1}.
\] 
\end{theorem}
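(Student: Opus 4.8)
The plan is to deduce the stated identity from the ``individual'' Waldspurger formula, which expresses the square of each Fourier coefficient $a_n$ of the half-integral weight form $f$ as a product of one global constant, finitely many local factors, and the central value of the twisted $L$-function of the associated newform $F$. Concretely, I would first establish a formula of the shape
\[ a_n^2 = \kappa \cdot \left( \prod_{p} c_p(n) \right) \cdot L\left(F \otimes \chi_0^{-1}\left(\tfrac{n}{\cdot}\right), \tfrac{k-1}{2}\right) \cdot n^{k/2-1}, \]
where $\kappa$ depends only on $f$ and $F$ and not on $n$, and where each local factor $c_p(n)$ depends on $n$ only through its class in $\Q_p^\times/(\Q_p^\times)^2$. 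Granting this, the theorem is formal: forming the ratio $a_{n_1}^2/a_{n_2}^2$ cancels $\kappa$; at primes $p \mid N$ the hypothesis $n_1/n_2 \in (\Q_p^\times)^2$ forces $c_p(n_1)=c_p(n_2)$, so those factors cancel (and the positivity of $n_1, n_2$ gives the same archimedean square class, so the factor at infinity cancels too); while at primes $p \nmid N$ the factor $c_p(n)$ is an explicit elementary quantity whose $n$-dependence is exactly what is repackaged into the character value $\chi(n_2/n_1)$ and the quotient of twisted $L$-values and powers of $n$. Thus the real content lies in proving the individual formula and in controlling the dependence of the $c_p(n)$ on the local square class.

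To prove the individual formula I would work with the theta correspondence between the metaplectic group $\widetilde{\mathrm{SL}}_2$ and $\mathrm{PGL}_2 \cong \mathrm{SO}_3$, the representation-theoretic incarnation of the Shimura lift that underlies the decomposition $S_{k/2}(N,\chi) = S_0(N,\chi) \oplus \bigoplus_F S_{k/2}(N,\chi,F)$ recalled above; under it $f$ corresponds to an automorphic representation $\pi_F$ generated by $F$. The key step is to realize the $n$-th coefficient $a_n$ as a \emph{toric period}, namely the integral of the theta kernel (against a vector in $\pi_F$) over the adelic points of the non-split torus $T_n$ attached to $\Q(\sqrt{-n})$. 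Applying the see-saw duality for the embedding $T_n \hookrightarrow \mathrm{SO}_3$ converts the square $|a_n|^2$ into a global integral which, by the Rankin--Selberg method, unfolds into the central value $L(F \otimes \chi_0^{-1}(\tfrac{n}{\cdot}), \tfrac{k-1}{2})$ multiplied by a product of purely local orbital integrals; the constant $\kappa$ collects the theta-kernel normalization and the Petersson norm of $F$. The level hypotheses $16 \mid M$ and $M \mid N/2$ are used here to match the levels of $f$ and $F$ and to keep the local representations at $2$ in the stable range where the lift is nonvanishing and the local integrals are well behaved.

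The remaining and genuinely delicate point is the local harmonic analysis: one must evaluate the local orbital integrals $c_p(n)$ and show they are nonzero multiples of local $L$- and $\epsilon$-factors whose dependence on $n$ factors through the local square class alone. At primes $p \nmid 2N$ the data are unramified and this is a standard spherical (Casselman--Shalika type) computation yielding an explicit elementary factor. At the ramified primes $p \mid N$, and especially at the dyadic place $p=2$ and the archimedean place where the weight $k/2$ enters, the integrals are much harder to evaluate in closed form, but one only needs their invariance under scaling $n$ by a local square. Establishing precisely this local invariance, rather than computing $\kappa$ or the individual $c_p(n)$ explicitly, is what makes the clean \emph{ratio} form of the theorem possible, and it is exactly what the hypothesis $n_1/n_2 \in (\Q_p^\times)^2$ is tailored to exploit. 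I expect this dyadic and archimedean local computation to be the main obstacle; once the local statements are in hand, the global unfolding and the final cancellation argument are comparatively formal.
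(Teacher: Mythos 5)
The first thing to note is that the paper does not prove Theorem~\ref{thm:wald} at all: it is stated as a quoted special case of Waldspurger's results, namely \cite[Corollary 2]{Waldspurger}, and is used as a black box in the derivation of Theorem~\ref{thm:Lfunc}. So there is no internal proof to compare yours against; the question is whether your sketch would stand as an independent proof, and it would not. What you have written correctly reproduces the \emph{architecture} behind Waldspurger's theorem---the Shimura correspondence realized through the theta correspondence between the metaplectic cover of $\mathrm{SL}_2$ and $\mathrm{PGL}_2$, a period identity expressing $a_n^2$ as (global constant)$\times$(local factors)$\times$(central twisted $L$-value), and the deduction of the ratio identity from square-class invariance of the ramified local factors, which is indeed exactly how the ``Corollaire 2'' form of the statement is packaged. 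But every step carrying actual content is deferred or asserted: the individual formula itself, the see-saw/unfolding identity, the multiplicity-one inputs it silently relies on, and above all the local invariance at $p \mid N$, at the dyadic place, and at infinity. You say explicitly that these local computations are ``the main obstacle''; they are not an obstacle on the way to the theorem, they \emph{are} the theorem. As it stands the proposal is a proof-shaped research program, not a proof.

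Two concrete soft spots deserve mention even at the level of the outline. First, the character value $\chi(n_2/n_1)$ in the target identity is precisely the part your sketch waves at: taking the naive ratio of two instances of your displayed individual formula yields $a_{n_1}^2/a_{n_2}^2$ as a product of the $L$-value ratio, $(n_1/n_2)^{k/2-1}$, and the ratio $\prod_p c_p(n_1)/c_p(n_2)$, and one must actually \emph{compute} the unramified factors to see that this last ratio equals $\chi(n_2/n_1)$ with the paper's normalization of the twist $\chi_0^{-1}\bigl(\frac{n}{\cdot}\bigr)$; this bookkeeping (where a wrong normalization of the Shimura lift silently changes the character or a sign) is exactly what a sketch at this altitude cannot certify. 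Second, realizing $a_n$ as a toric period over the torus of $\Q(\sqrt{-n})$ via see-saw is the mechanism of the later exact-formula treatments (Waldspurger's period formula, Baruch--Mao, Kohnen--Zagier-type results); the 1981 proof underlying \cite[Corollary 2]{Waldspurger} instead proceeds through local Whittaker functionals and Shimura's Rankin--Selberg integral. Either route can be made to work, but the classical level hypotheses in the statement ($16 \mid M$, $M \mid N/2$, $\chi$ quadratic modulo $N$) are tuned to the classical-form setup, and if you adopt the adelic toric-period framework you must re-derive, not merely assert, that these conditions place the local lifts at $2$ in the nonvanishing range. In short: your global reduction is fine and faithful to the known strategy, but none of the statements that make the theorem true are established, which is presumably why the paper cites Waldspurger rather than reproving him.
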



Let $F_1 \in S_2^{\mathrm{new}}(128, \chi_{\mathrm{triv}})$ be the newform corresponding to the elliptic curve $E_1$. It has the following $q$-expansion: 
\[
F_1 = q + 2q^3 + 2q^5 - 4q^7 + q^9 - 2q^{11} + 2q^{13} + 4q^{15} - 2q^{17} + 2q^{19} + O(q^{20}). 
\]
We also consider the newforms $F_{-1}$, $F_{-2}$ corresponding to twists $E_{-1}$ 
and $E_{-2}$ respectively, each of conductor $128$: 
\begin{align*}
F_{-1} &= q - 2q^3 + 2q^5 + 4q^7 + q^9 + 2q^{11} + 2q^{13} - 4q^{15} - 2q^{17} - 2q^{19} + O(q^{20}),\\
F_{-2} &= q + 2q^3 - 2q^5 + 4q^7 + q^9 - 2q^{11} - 2q^{13} - 4q^{15} - 2q^{17} + 2q^{19}  + O(q^{20}).
\end{align*}

We compute the Shimura-equivalent spaces $S_{3/2}(512, \chi_{\mathrm{triv}}, F_1)$, $S_{3/2}(512, \chi_{\mathrm{triv}}, F_{-1})$, $S_{3/2}(512, \left( \frac{2}{n} \right), F_{1})$ and 
$S_{3/2}(512, \chi_{\mathrm{triv}}, F_{-2})$ using the decomposition algorithm \cite[Corollary 5.2]{Purkait}.

We illustrate the case for $S_{3/2}(512, \chi_{\mathrm{triv}}, F_1)$. In this case, using the algorithm it turns out that 
\[
S_{3/2}(512,\chi_{\mathrm{triv}},F_1)=
\left\{
f \in S_{3/2}(512,\chi_{\mathrm{triv}}) \; : \;
T_{3^2} (f) = 2 f, T_{5^2} (f) = 2 f  
\right\};
\]
note that $T_3(F_1)=2F_1,\ T_5(F_1)=2F_1$.
Implementing this in {\tt MAGMA} \cite{Magma}, we get that $S_{3/2}(512,\chi_{\mathrm{triv}},F_1)$ is $2$-dimensional and spanned by $f_1$ and $f_2$ whose $q$-expansions are noted below.  

It turns out that for each of the Shimura-equivalent spaces noted above, we can find a basis that can be expressed in terms of  theta series coming from positive definite ternary quadratic forms. In the below $\theta_{[a,b,c,r,s,t]}$ stands for the theta series of 
quadratic form  $Q=ax^2 +by^2 +cz^2 +ryz+sxz+txy$, that is, 
$\theta_{[a,b,c,r,s,t]} = \displaystyle \sum_{n=0}^{\infty} r(Q,n)q^n$,
where $r(Q,n)$ is the number of representations of $n$ by $Q$.

We obtain the following description of the Shimura-equivalent spaces:
\begin{enumerate}
\item The space $S_{3/2}(512, \chi_{\mathrm{triv}}, F_1)$ has a basis 
$\{ f_1, f_2 \}$, where $f_1$ and $f_2$ are given by the theta series:
\[
f_1 = \frac{1}{2} \left(\theta_{[ 1, 8, 128, 0, 0, 0 ]}- \theta_{[ 4, 8, 33, 0, -4, 0 ]} \right), \qquad 
f_2 = \frac{1}{2} \left(\theta_{[ 3, 5, 19, -2, -2, -2 ]} - \theta_{[ 5, 6, 10, -4, -4, 0 ]} \right)
\]
and 
they have the following $q$-expansions:
\begin{align*}
f_1 & = q + 3q^9 + 2q^{17} + q^{25} + 2q^{33} - 4q^{41} - 3q^{49} + 2q^{57} -
    8q^{65} - 2q^{73} + O(q^{80}), \\
f_2 &= q^3 - 3q^{11} + q^{19} + 2q^{27} + 2q^{35} - q^{43} - 4q^{51} + q^{59} -
    3q^{67} + 3q^{75} + O(q^{80}). 
\end{align*} 

\item The space $S_{3/2}(512, \chi_{\mathrm{triv}}, F_{-1})$ has a basis 
$\{ g_1, g_2 \}$, where $g_1$ and $g_2$ are given by the theta series:
\begin{align*}
g_1 = \frac{1}{2} \left(\theta_{[ 1, 10, 26, -4, 0, 0 ]} - \theta_{[ 4, 9, 10, -8, 0, -4 ]} 
\right), \qquad 
g_2 = \frac{1}{2} \left(\theta_{[ 3, 8, 43, 0, -2, 0 ]} - \theta_{[ 8, 11, 12, -4, 0, 0 ]} \right)
\end{align*} 
and they have the following $q$-expansions:
\begin{align*}
g_1 &= q - q^9 - 2q^{17} + q^{25} - 2q^{33} + 4q^{41} + 5q^{49} - 2q^{57} -
    6q^{73} - q^{81} - 2q^{89} + O(q^{90}), \\
g_2 &= q^3 + q^{11} - 3q^{19} - 2q^{27} + 2q^{35} - q^{43} + q^{59} + q^{67} +
    3q^{75} + q^{83} + O(q^{90}).
\end{align*}

\item The space $S_{3/2}(512, \left( \frac{2}{n} \right), F_{1})$ has a basis 
$\{ h_1, h_2 \}$, where $h_1$ and $h_2$ 
are given by the theta series:
\begin{align*}
h_1 = \frac{1}{2} \left(\theta_{[ 128, 16, 1, 0, 0, 0 ]} - \theta_{[ 33, 16, 4, 0, 4, 0 ]} \right), \qquad 
h_2 = \frac{1}{2} \left(\theta_{[ 47, 28, 7, 4, 6, 20 ]} - \theta_{[ 28, 23, 15, 10, 12, 4 ]} 
\right)
\end{align*}
and they have the following $q$-expansions:
\begin{align*}
h_1 &= q + q^9 + 2q^{17} + 3q^{25} - 2q^{33} - 3q^{49} - 6q^{57} + 4q^{65} -
    2q^{73} - q^{81} + 2q^{89} + O(q^{90}), \\
h_2 &= q^7 - q^{15} - q^{23} + q^{39} - q^{55} + q^{63} - q^{71} + 2q^{79} + q^{87} + O(q^{90}).
\end{align*}
Note that we also have 
\[ h_2 = \frac{1}{4} \left(\theta_{[ 7, 7, 44, -4, -4, -2 ]} - \theta_{[ 12, 15, 15, 14, 4, 4 ]} \right);\] 
this expression will be used in Theorem~\ref{thm:mod16}.

\item The space $S_{3/2}(512, \chi_{\mathrm{triv}}, F_{-2})$ has a basis 
$\{ k_1, k_2 \}$, where $k_1$ and $k_2$ are given by the theta series:
\begin{align*}
k_1 = \frac{1}{4} \left(\theta_{[ 3, 3, 128, 0, 0, -2 ]}-\theta_{[ 4, 8, 35, -8, -4, 0 ]} 
\right), \qquad 
k_2 = \frac{1}{4} \left(\theta_{[ 5, 5, 44, 4, 4, 2 ]} - \theta_{[ 8, 12, 13, -4, -8, 0 ]} \right)
\end{align*}
and have the following $q$-expansions:
\begin{align*}
k_1 &= q^3 + q^{11} + q^{19} + 2q^{27} - 2q^{35} - q^{43} - 3q^{59} + q^{67} -
    q^{75} - 3q^{83} + O(q^{90}), \\
k_2 &= q^5 - q^{13} - q^{29} - q^{37} + q^{45} + q^{53} + q^{61} - 2q^{69} +
    2q^{77} + O(q^{90}).   
\end{align*}

\end{enumerate}

\begin{theorem}\label{thm:Lfunc}
Let $E_1$ be as above and $n$ be a positive 
square-free odd integer. 
\begin{enumerate}
\item
Let $f_1 + \sqrt{2}f_2 := \sum_{n \geq 1} a_n q^n$. Then 
\[
L(E_{-n}, 1) = \begin{cases}
\dfrac{L(E_{-1}, 1)}{\sqrt{n}} a_n^2, &\mbox{if } n \equiv 1, 3 \pmod{8}, \\
0 &\mbox{otherwise}.
\end{cases}
\]
\item
Let $g_1 + \sqrt{2}g_2 := \sum_{n \geq 1} b_n q^n$. Then 
\[
L(E_{n}, 1) = \begin{cases}
\dfrac{L(E_{1}, 1)}{\sqrt{n}} b_n^2, &\mbox{if } n \equiv 1, 3 \pmod{8}, \\
0 &\mbox{otherwise}.
\end{cases}
\]
\item
Let $h_1 + 2\sqrt{2}h_2 := \sum_{n \geq 1} c_n q^n$. Then 
\[
L(E_{-2n}, 1) = \begin{cases}
\dfrac{L(E_{-2}, 1)}{\sqrt{n}} c_n^2 &\mbox{if } n \equiv 1, 7 \pmod{8}, \\
0 &\mbox{otherwise}.
\end{cases}
\]
\item 
Let $k_1 + \sqrt{2}k_2 := \sum_{n \geq 1} d_n q^n$. Then 
\[
L(E_{2n}, 1) = \begin{cases}
\dfrac{L(E_{6}, 1)\sqrt{3}}{\sqrt{n}} d_n^2, &\mbox{if } n \equiv 3, 5 \pmod{8}, \\
0 &\mbox{otherwise}.
\end{cases}
\]
\end{enumerate}
\end{theorem}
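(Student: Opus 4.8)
The plan is to obtain all four identities by feeding the explicit weight-$3/2$ forms of the four Shimura-equivalent spaces into Waldspurger's theorem (Theorem~\ref{thm:wald}) with $k=3$, reading the needed Fourier coefficients off the listed $q$-expansions. First I would specialize the data of that theorem. With $k=3$ we have $(k-1)/2=1$, $k/2-1=\tfrac12$, and $\chi_0=\chi\cdot\left(\frac{-1}{\cdot}\right)$; for trivial $\chi$ this gives $\chi_0^{-1}\left(\frac{n}{\cdot}\right)=\left(\frac{-n}{\cdot}\right)$, so $F_1\otimes\chi_0^{-1}\left(\frac{n}{\cdot}\right)$ is the newform attached to the twist $E_{-n}$ and the space attached to $F_1$ indeed computes $L(E_{-n},1)$, as in part~(1). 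Writing $F_{-1}=F_1\otimes\left(\frac{-1}{\cdot}\right)$ and $F_{-2}=F_1\otimes\left(\frac{-2}{\cdot}\right)$, and taking $\chi=\left(\frac{2}{\cdot}\right)$ in part~(3), the same bookkeeping identifies the relevant twists as $E_{n}$, $E_{-2n}$ and $E_{2n}$ in parts (2)--(4). I would also verify the hypotheses of Theorem~\ref{thm:wald}: each Shimura lift has conductor $128$ with $16\mid 128$ and $128\mid N/2=256$.

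Next I would determine which integers the theorem actually compares. Since $N=512=2^9$, the condition $n_1/n_2\in(\mathbb{Q}_p^\times)^2$ for all $p\mid N$ reduces to a single $2$-adic condition, which for odd square-free $n_1,n_2$ holds precisely when $n_1\equiv n_2\pmod 8$; note that this also forces $\chi(n_2/n_1)=1$ in every part, including part~(3) where $\left(\frac{2}{\cdot}\right)$ depends only on residues modulo $8$. Thus Waldspurger links two twists only inside a fixed class modulo $8$. The theta-series $q$-expansions show that each first basis element is supported on one such class and each second on another (for instance $f_1$ on the class $n\equiv1$ and $f_2$ on $n\equiv3\pmod8$), so $f_1+\sqrt2 f_2$ is supported exactly on $n\equiv1,3\pmod8$, and similarly for the three other forms. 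In the remaining classes the coefficient vanishes; by Proposition~\ref{rootnumbers} the root number of the corresponding twist is $-1$, so its central $L$-value vanishes too. Both sides are then zero, which settles the ``$0$ otherwise'' alternatives.

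For the surviving classes I would apply Theorem~\ref{thm:wald} with $n_1=n$ and base point $n_2\in\{1,3,5,7\}$ the least square-free representative of the class. After inserting $\chi(n_2/n_1)=1$ and $n^{k/2-1}=\sqrt n$, the relation collapses to $L=C\cdot a_n^2/\sqrt n$, with $C$ equal to the base-point $L$-value multiplied by $\sqrt{n_2}/a_{n_2}^2$. The base coefficients are immediate from the expansions: in part~(1) the coefficient of $q$ in $f_1+\sqrt2 f_2$ is $1$ and that of $q^3$ is $\sqrt2$, so the two class-constants are $L(E_{-1},1)$ and $L(E_{-3},1)\sqrt3/2$; the coefficients $1$ and $\sqrt2$ (respectively $1$ and $2\sqrt2$ in part~(3)) play the same role throughout.

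The crux is the last step: showing that the two class-constants agree, so that a single constant governs both classes as claimed. Waldspurger's comparison never joins $n\equiv1$ to $n\equiv3\pmod8$, since these fail the local-square condition at $2$, so this equality lies outside Theorem~\ref{thm:wald}. Concretely it is the arithmetic input $L(E_{-3},1)=\tfrac{2}{\sqrt3}\,L(E_{-1},1)$ in part~(1), and likewise $L(E_{3},1)=\tfrac{2}{\sqrt3}\,L(E_{1},1)$, $L(E_{-14},1)=\tfrac{8}{\sqrt7}\,L(E_{-2},1)$ and $L(E_{10},1)=\tfrac{2\sqrt3}{\sqrt5}\,L(E_{6},1)$ in parts (2)--(4). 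The specific coefficients $\sqrt2$ and $2\sqrt2$ on the second basis elements are exactly what render these ratios perfect squares, reflecting the relative bad Euler factor at $2$ for the two twists. I would justify the identity either by checking that each combined form $f_1+\sqrt2 f_2$ (and its analogues) is a genuine Hecke eigenform with Shimura lift the indicated $F$, so that the sharp eigenform version of Waldspurger's formula supplies one global constant with the $\sqrt2$ absorbing the $2$-adic normalization, or---more in keeping with the computational framework here---by confirming the four base-point ratios directly from the known central values of these $L$-functions.
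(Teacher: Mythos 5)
Your proposal is correct and takes essentially the same route as the paper: the paper likewise applies Theorem~\ref{thm:wald} classwise with the small base points ($n_2=1$ and $3$ in parts (1)--(2), etc.), kills the remaining classes via the root numbers of Proposition~\ref{rootnumbers}, and supplies exactly the cross-class ratio you identify as the crux (in part (1), $2L(E_{-1},1)=\sqrt{3}\,L(E_{-3},1)$, so that the factor $\sqrt{2}$ on $f_2$ merges the two class constants into one). The paper realizes your second suggested justification of that ratio in its rigorous form — by Shimura's theorem $L(E_{-n},1)/\Omega_{E_{-n}}\in\Q$, a change-of-variables argument showing $\Omega_{E_{-1}}/\Omega_{E_{-n}}$ is a rational multiple of $\sqrt{n}$, and exact \texttt{MAGMA} computations $\Omega_{E_{-1}}=\sqrt{3}\,\Omega_{E_{-3}}$, $L(E_{-1},1)/\Omega_{E_{-1}}=1/2$, $L(E_{-3},1)/\Omega_{E_{-3}}=1$ — so your appeal to ``known central values'' should be understood as this exact rational computation (rationality of $L/\Omega$ plus period comparison), not mere floating-point agreement.
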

\begin{proof}
We prove statement 1, the others can be done similarly. 

Write $f_1= \sum_{n\ge 1}\alpha_nq^n$ and $f_2= \sum_{n\ge 1}\alpha'_nq^n$. 
Applying Theorem~\ref{thm:wald} to $f_1\in S_{3/2}(512, \chi_{\mathrm{triv}}, F_1)$ (so $\chi_0=\left( \frac{-1}{\cdot}\right)$) and taking $n_1=n\equiv 1\pmod{8}$ and $n_2=1$ (so $n_2/n_1 \in (\mathbb{Q}_{2}^{\times})^{2}$), we get that   
\[ 
\alpha_{n}^2 L\left(F_1 \otimes \left( \frac{-1}{\cdot}\right), 1\right) 
= 
\alpha_{1}^2 L\left(F_1 \otimes  \left( \frac{-n}{\cdot}\right), 1 \right)  \sqrt{n},
\] 
i.e., \[\text{for $n\equiv 1\pmod{8},$}\quad \alpha_{n}^2 L\left(E_{-1}, 1\right) 
=  L\left(E_{-n}, 1 \right)  \sqrt{n}.\] This gives statement 1 partially. 

Similarly, applying Theorem~\ref{thm:wald} to $f_2$ and taking $n_1=n\equiv 3\pmod{8}$ and $n_2=3$, we get that 
\[\text{for $n\equiv 3\pmod{8},$} \quad \displaystyle {\alpha'}_{n}^2 
L\left(E_{-3}, 1\right) \sqrt{3} 
=  L\left(E_{-n}, 1 \right)  \sqrt{n}.\] 

By a well-known result of Shimura \cite{Shimura2}, for rational elliptic curves, we know that ${\mathrm{L}(E_{-n},1)}/{\Omega_{E_{-n}}}$,  where $\Omega_{E_{-n}}$ denotes the real period of $E_{-n}$, is rational. Also, using a change of variables argument we can show that $\Omega_{E_{-1}}/\Omega_{E_{-n}}$ equals some rational multiple of $\sqrt{n}$. We use {\tt MAGMA} to compute these rational numbers in particular cases: 
\[\Omega_{E_{-1}}=\sqrt{3}\Omega_{E_{-3}},\quad {\mathrm{L}(E_{-1},1)}/{\Omega_{E_{-1}}}=1/2,\quad  {\mathrm{L}(E_{-3},1)}/{\Omega_{E_{-3}}}=1.\]
Hence, we get the relation 
\[2L\left(E_{-1}, 1\right)= \sqrt{3}L\left(E_{-3}, 1\right).\]

Also, by Proposition \ref{rootnumbers}, $L\left(E_{-n}, 1 \right)=0$ for $n\equiv 5,\ 7 \pmod{8}$.
Combining the above, we get statement 1.

\end{proof}

We have the following consequence of 
the previous theorem.

\begin{corollary}
Assume the Birch and Swinnerton-Dyer conjecture.  
\begin{enumerate}
\item If $n \equiv 1, 3 \pmod{8}$ then 
\begin{align*}
\mathrm{rank }(E_{-n}) &\geq 2 \iff a_n = 0 \quad \mbox{ and } \\ 
\mathrm{rank }(E_{n}) &\geq 2 \iff b_n = 0 .
\end{align*}
\item If $n \equiv 1, 7 \pmod{8}$ then 
\[ \mathrm{rank }(E_{-2n}) \geq 2 \iff c_n = 0. \]
\item If $n \equiv 3, 5 \pmod{8}$ then 
\[ \mathrm{rank }(E_{2n}) \geq 2 \iff d_n = 0. \]
\end{enumerate}
\end{corollary}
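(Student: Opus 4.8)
The plan is to combine the explicit $L$-value formulae from Theorem~\ref{thm:Lfunc} with the Birch and Swinnerton-Dyer conjecture and the root-number computation of Proposition~\ref{rootnumbers}. I will prove statement (1) for the twist $E_{-n}$ in detail; the remaining cases are formally identical, substituting the appropriate form ($g_1+\sqrt2 g_2$, $h_1+2\sqrt2 h_2$, or $k_1+\sqrt2 k_2$) and the relevant residue classes modulo $8$. Throughout, $n$ is a positive square-free odd integer with $n\equiv 1,3\pmod 8$, so that by Proposition~\ref{rootnumbers} the root number $W(E_{-n}/\Q)$ equals $+1$; hence the analytic rank of $E_{-n}$ is even.

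The first step is to observe that, under BSD, $\mathrm{rank}(E_{-n})$ equals the order of vanishing $\mathrm{ord}_{s=1}L(E_{-n},s)$. Since the analytic rank is even in this residue range, we have $\mathrm{rank}(E_{-n})\ge 2$ if and only if the analytic rank is nonzero, which happens if and only if $L(E_{-n},1)=0$. Thus the corollary reduces to the purely analytic equivalence
\[
L(E_{-n},1)=0 \iff a_n=0.
\]

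The second step is to extract this equivalence from Theorem~\ref{thm:Lfunc}(1). For $n\equiv 1,3\pmod 8$ that theorem gives
\[
L(E_{-n},1)=\frac{L(E_{-1},1)}{\sqrt n}\,a_n^2 .
\]
Because $E_{-1}$ has root number $+1$ and rank $0$ (for instance by the $n=1$ case of Proposition~\ref{rootnumbers} together with a direct check, or since $L(E_{-1},1)$ is the nonzero leading coefficient), the factor $L(E_{-1},1)$ is a strictly positive real number, and $\sqrt n>0$. Consequently the vanishing of $L(E_{-n},1)$ is governed entirely by the factor $a_n^2$, giving $L(E_{-n},1)=0\iff a_n=0$. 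Chaining this with the first step yields $\mathrm{rank}(E_{-n})\ge 2\iff a_n=0$. The statement for $E_n$ follows verbatim using Theorem~\ref{thm:Lfunc}(2) and the nonvanishing of $L(E_1,1)$; statements (2) and (3) use parts (3) and (4) of Theorem~\ref{thm:Lfunc}, together with the root-number data in the ranges $n\equiv 1,7\pmod 8$ and $n\equiv 3,5\pmod 8$ respectively, where the relevant reference values $L(E_{-2},1)$ and $L(E_6,1)$ are again nonzero.

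The only genuine point requiring care — and the step I expect to be the main obstacle — is justifying that the reference $L$-values $L(E_{\pm 1},1)$, $L(E_{-2},1)$, and $L(E_6,1)$ are strictly nonzero, since the whole equivalence collapses if any of these vanishes. Each of these is the central $L$-value of a rank-$0$ curve with root number $+1$, so nonvanishing is expected, but it must be confirmed rather than assumed: I would verify it either by the explicit rational evaluations $L(E_{-1},1)/\Omega_{E_{-1}}=1/2$ and $L(E_{-3},1)/\Omega_{E_{-3}}=1$ already recorded in the proof of Theorem~\ref{thm:Lfunc}, which show these curves have algebraic (hence nonzero) central values, or by a direct \texttt{MAGMA} computation of the analytic rank. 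Once this nonvanishing is in hand, the argument is entirely formal.
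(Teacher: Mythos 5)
Your proposal is correct and follows essentially the same route as the paper's proof: Proposition \ref{rootnumbers} gives even analytic rank in the stated residue classes, BSD transfers this parity to the algebraic rank so that $\mathrm{rank}\ge 2$ is equivalent to $L$-value vanishing, and Theorem \ref{thm:Lfunc} reduces that vanishing to the vanishing of the relevant coefficient. The one point you flag --- nonvanishing of the reference values $L(E_{\pm 1},1)$, $L(E_{-2},1)$, $L(E_6,1)$ --- is left implicit in the paper's (very terse) proof, and your justification via the rational evaluations recorded in the proof of Theorem \ref{thm:Lfunc} (e.g.\ $L(E_{-1},1)/\Omega_{E_{-1}}=1/2$) correctly closes that small gap.
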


\begin{proof}
We prove the first equivalence in statement 1. 
If $n \equiv 1, 3 \pmod{8}$ 
then Proposition \ref{rootnumbers} 
implies that $W(E_n/\mathbb{Q}) = 1$. 
Hence the analytic rank of $E_n$ is even. By the Birch and Swinnerton-Dyer conjecture, the (algebraic) rank of $E_n$ is even. The desired result follows from Theorem \ref{thm:Lfunc} (1).  
The rest can be verified similarly.
\end{proof}

\subsection{Families of non-$\pi/4$ and non-$3\pi/4$-congruent numbers}

For a ternary quadratic form $Q$ and a positive integer $m$, let $r(Q,m)$ denote the number of representations of $m$ by $Q$ (as in the previous subsection) and let $R(Q,m)$ denote the number of essentially distinct primitive representations of $m$ by the genus of ternary quadratic forms containing $Q$. In this section, we consider $r(Q,p)$ for certain families of primes $p$ for quadratic forms that appeared in the last section. To draw any inference on $r(Q,p)$ we need to consider the automorphs of $Q$. We use codes of Gonzalo Tornaria, implemented in {\tt SageMath} \cite{Sage}, to obtain all possible automorphs. 

\begin{theorem}
Let $p$ be a prime with $p \equiv 1 \pmod{8}$. Write $p=a^2 + 8b^2$  and suppose that $b$ is odd. Let $a_p$, $b_p$, and $c_p$ be as in Theorem~\ref{thm:Lfunc}. Then, 
\begin{enumerate}
\item  $a_p \equiv 2 \pmod{4}$; consequently, $L(E_{-p},1) \neq 0$ and so $p$ is not $3\pi/4$-congruent.
\item $b_p \equiv 2 \pmod{4}$; consequently, $L(E_{p},1) \neq 0$ and so $p$ is not $\pi/4$-congruent.
\item $c_p \equiv 2 \pmod{4}$;  consequently, $L(E_{-2p},1) \neq 0$ and so $2p$ is not $3 \pi/4$-congruent.
\end{enumerate}
\end{theorem}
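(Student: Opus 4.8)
The plan is to deduce each non-congruence statement from the nonvanishing of the relevant central $L$-value, and to obtain that nonvanishing from the stronger assertion that the half-integral weight coefficient in question is $\equiv 2 \pmod 4$. Concretely, once I show $a_p \equiv 2 \pmod 4$ (so $a_p \neq 0$), Theorem~\ref{thm:Lfunc}(1) gives $L(E_{-p},1) = \frac{L(E_{-1},1)}{\sqrt p}\,a_p^2 \neq 0$, since $L(E_{-1},1) \neq 0$ (its ratio to the real period was computed to be $1/2$ in the proof of Theorem~\ref{thm:Lfunc}); by the theorem that a nonzero central value forces algebraic rank zero, $\mathrm{rank}_{\mathbb Q}(E_{-p}) = 0$, and Theorem~\ref{congruence_MWrank} then shows $p$ is not $3\pi/4$-congruent. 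Statements (2) and (3) follow the same template via Theorem~\ref{thm:Lfunc}(2),(3) and the nonvanishing of $L(E_1,1)$ and $L(E_{-2},1)$, so everything reduces to the three congruences $a_p, b_p, c_p \equiv 2 \pmod 4$.

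For the coefficients themselves I would first reduce to a single ternary theta series. Since $p \equiv 1 \pmod 8$ and the $q$-expansions of $f_2, g_2$ are supported on exponents $\equiv 3 \pmod 8$ while that of $h_2$ is supported on exponents $\equiv 7 \pmod 8$, the irrational parts contribute nothing at $q^p$; hence $a_p = \tfrac12\!\left(r(Q_1,p) - r(Q_2,p)\right)$ with $Q_1 = x^2 + 8y^2 + 128z^2$ and $Q_2 = 4x^2 + 8y^2 + 33z^2 - 4xz$, and $b_p$, $c_p$ are likewise half-differences of the representation numbers of the two ternary forms defining $g_1$, resp.\ $h_1$. Thus $a_p \equiv 2 \pmod 4$ is equivalent to $r(Q_1,p) - r(Q_2,p) \equiv 4 \pmod 8$, and similarly for the others.

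The core is to evaluate these representation numbers modulo $8$ by partitioning the (necessarily primitive, as $p$ is prime) representations into orbits under the integral automorphism group $\mathrm{Aut}(Q)$, computed via the automorph routines in SageMath. The contribution of an orbit is $|\mathrm{Aut}(Q)|/|\mathrm{Stab}|$, so orbits with trivial stabilizer are harmless modulo $8$ when $|\mathrm{Aut}(Q)| = 8$; what remains to control are the \emph{degenerate} orbits, where a coordinate is forced to vanish and a nontrivial automorph fixes the vector. Here the hypothesis that $b$ is odd is decisive. For $Q_1$ and $Q_2$ one checks $|\mathrm{Aut}| = 8$ (the sign group for the diagonal $Q_1$; an order-$8$ group for $Q_2$, found by locating its minimal vectors). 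Reducing modulo small powers of $2$ shows the first coordinate of any representation is odd, and that $b$ odd excludes every degenerate representation by $Q_2$ as well as the ``$y=0$'' degenerate representations by $Q_1$ --- such a representation would yield a second representation $p = x^2 + 8y^2$ with an even $y$, contradicting the uniqueness of $p = a^2 + 8b^2$ together with $b$ odd. Exactly one degenerate orbit of $Q_1$ survives, namely the one coming from the given representation $p = a^2 + 8b^2$ (with $z = 0$), and it has size $4$. Hence $r(Q_1,p) \equiv 4$ and $r(Q_2,p) \equiv 0 \pmod 8$, giving $a_p \equiv 2 \pmod 4$.

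Statements (2) and (3) run along the same lines but must be redone for their own forms, and this case-by-case automorph bookkeeping is where I expect the real difficulty. The forms for $h_1$ again have automorphism group of order $8$, and the only new input is that the surviving degenerate orbit of $128x^2 + 16y^2 + z^2$ --- the representations $p = z^2 + 16y^2$ --- is always present: since $16$ is an idoneal number, genus theory guarantees that every prime $p \equiv 1 \pmod 8$ is of the form $z^2 + 16y^2$, while $b$ odd again kills the complementary family; this yields $c_p \equiv 2 \pmod 4$. The forms defining $g_1$ are more delicate because one of them has automorphism group of order only $4$: there are no degenerate orbits at all, so rather than isolating a single size-$4$ orbit I would have to determine the \emph{parity} of the number of size-$4$ orbits for each form and show these parities differ, which is precisely the point at which the oddness of $b$ and the relevant binary class numbers must be fed in. Verifying that all six orbit counts land in the intended residues modulo $8$ is the main obstacle; once they do, the arithmetic conclusions above are immediate.
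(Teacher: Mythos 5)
Your overall strategy and your treatment of parts (1) and (3) coincide with the paper's proof: you reduce $a_p$, $b_p$, $c_p$ to half-differences of representation numbers of the two ternary forms appearing in $f_1$, $g_1$, $h_1$ (correctly discarding $f_2$, $g_2$, $h_2$, whose expansions are supported on exponents $\equiv 3$, resp.\ $7 \pmod 8$), then count representations modulo $8$ via orbits under the automorph groups, using the oddness of $b$ together with the uniqueness of $p = a^2+8b^2$ (the paper invokes that $\mathbb{Z}[\sqrt{-2}]$ is a PID) to exclude all degenerate representations except a single orbit of size $4$: for $Q_1=[1,8,128,0,0,0]$ the orbit with $z=0$, and for $Q_5=[128,16,1,0,0,0]$ the orbit with $x=0$ coming from $p=s^2+16r^2$ --- your idoneal-number justification for the existence of that representation is a fine substitute for the paper's direct assertion. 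The framing deduction (coefficient $\equiv 2 \pmod 4$ implies the central value is nonzero by Theorem~\ref{thm:Lfunc}, hence rank zero, hence non-congruence by Theorem~\ref{congruence_MWrank}) is also exactly the paper's.

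Part (2), however, contains a genuine gap, and you identify it without closing it: since $Q_3=[1,10,26,-4,0,0]$ has automorph group of order only $4$, free orbits control $r(Q_3,p)$ only modulo $4$, and your fallback --- computing the \emph{parity} of the number of size-$4$ orbits for each form via ``relevant binary class numbers'' --- is never carried out; that is precisely the hard step, and nothing in your sketch supplies it. Your structural claim that for the $g_1$-forms ``there are no degenerate orbits at all'' is also wrong in substance: the whole point is that $Q_4$ carries exactly one surviving degenerate orbit, because $Q_4(x,y,0)=(2x-y)^2+8y^2$ represents $p$ in exactly $4$ ways under the hypotheses. The paper's device, which you are missing, is a $2$-adic change of variables that doubles the automorph group. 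Since $p\equiv 1 \pmod 8$ forces $x$ odd and $y\equiv z \pmod 2$ in $Q_3(x,y,z)=p$, the substitution $Y=(y-z)/2$, $Z=(y+z)/2$ is a bijection of representations onto those of $Q_3'=[1,40,32,-32,0,0]$, whose automorph group has order $8$; the fixed loci ($x=0$; $y=0$, giving $p=x^2+32z^2$; $y=2z$, giving $p=x^2+128z^2$) are all excluded by $x$ odd and $b$ odd, so $r(Q_3,p)\equiv 0 \pmod 8$. Similarly, $Q_4(x,y,z)=p$ forces $y$ odd and $z$ even, and $z=2Z$ transforms $Q_4$ into $Q_4'=[4,9,40,-16,0,-4]$, again with an order-$8$ automorph group, one of whose elements fixes the plane $z=0$ pointwise; the $4$ representations there give $r(Q_4,p)\equiv 4 \pmod 8$, whence $b_p\equiv 2 \pmod 4$. (Class-number input of the kind you gesture at is used in the paper only later, in the $p\equiv 7 \pmod{16}$ result via Jones's Theorem~86 and Pizer, not in this theorem.)
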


\begin{proof}
\begin{enumerate}
\item 
Let $Q_1 := [ 1, 8, 128, 0, 0, 0 ]$ 
and $Q_2 := [ 4, 8, 33, 0, -4, 0 ]$; these are the forms that occur in $f_1$. 

We first consider $r(Q_1,p)$. Since $Q_1$ is stable under the transformations 
\[ (x,y,z) \mapsto (\pm x,y, z), (\pm x, -y, z), (\pm x, y, -z), \mbox{ and } 
(\pm x, -y, -z), \]
which are all possible transformations of $Q_1$, we have  
\[ \sharp \{ (x,y,z) \in \mathbb{Z}^3 : Q_1(x,y,z)=p \mbox{ and } xyz \neq 0 \}  \equiv 0 \pmod{8}.  \]

Since $\Z[\sqrt{-2}]$ is a principal ideal domain, $a$ and $b$ in the expression for $p$ are uniquely determined up to units, so 
\[\sharp \{ (x,y,z) \in \mathbb{Z}^3 : Q_1(x,y,z) = p \mbox{ and } z = 0 \} 
= 4. \]
Further since $b$ is odd, we cannot write $p$ as $x^2+128z^2$, hence,  
\[
\sharp \{ (x,y,z) \in \mathbb{Z}^3 : Q_1(x,y,z) = p \mbox{ and } xy = 0 \} 
= 0. \]
Thus, $r(Q_1, p) \equiv 4 \pmod{8}$. 

We now look at $r(Q_2,p)$. The possible automorphs of $Q_2$ are given by 
\[ (x,y,z) \mapsto (x,\pm y,z), (-x, \pm y, -z), (x-z, \pm y, -z), (-(x-z), \pm y, z). \]

It is clear that 
$\sharp \{ (x,y,z) \in \mathbb{Z}^3 : Q_2(x,y,z) = p \mbox{ and } z = 0 \} 
= 0$. Also, $p=Q_2(x,y,z)$ with $y=0$ and $xz\ne 0$ is not possible as otherwise, 
$p=4x^2+33z^2-4xz= (2x-z)^2+ 32z^2$, contrary to our assumption. It follows that 
\[ r(Q_2,p)= \sharp \{ (x,y,z) \in \mathbb{Z}^3 : Q_2(x,y,z)=p \mbox{ and } yz \neq 0 \}  \equiv 0 \pmod{8}.  \]
Consequently, 
$a_p = \frac{1}{2}(r(Q_1, p) - r(Q_2, p)) \equiv 2 \pmod{4}$. 

\item Let $Q_3 := [ 1, 10, 26, -4, 0, 0 ]$ 
and $Q_4 := [ 4, 9, 10, -8, 0, -4 ]$; 
these are the forms that occur in $g_1$.
Observe that if $p \equiv 1 \pmod{8}$ and $p = Q_3(x,y,z)$, then $x$ is odd and 
$y \equiv z \pmod{2}$. Put $X=x$, $Y = \frac{y-z}{2}$ and $Z = \frac{y + z}{2}$.
Then $p = Q_3(x,y,z) = Q'_3(X,Y,Z)$, where 
$Q'_3 = [1,40,32,-32,0,0]$.
The possible automorphs of $Q'_3$ are given by 
\[ (X,Y,Z) \mapsto  \pm (X,Y,Z), \pm(X,-Y,-Z), \pm(-X,Y, Y - Z) \mbox{ and }  \pm(X,Y,Y-Z).  \]
Hence,
\[r(Q_3, p) = r(Q'_3, p) = \sharp \{ (x,y,z) \in \mathbb{Z}^3 : Q_3'(x,y,z) = p \mbox{ and } xy \ne 0 \} \equiv 0 \pmod{8}; \]
in the above, $x$ must be non-zero as $x$ is odd and $y$ must be non-zero as otherwise $p=x^2+32z^2$, contradiction to our assumption on $p$.  

We now consider $Q_4$. Clearly,  
$\sharp \{ (x,y,z) \in \mathbb{Z}^3 : Q_4(x,y,z) = p \mbox{ and $y$ is even} \} 
= 0$.
Suppose that $Q_4(x,y,z) = p$ with $y$ odd. If $z$ is odd, then $p = Q_4(x,y,z) \equiv 3 \pmod{8}$, which is absurd, so $z$ must be even. Put $X=x$, $Y=y$, and $z = 2Z$. Then 
$p =Q_4(x,y,z) = Q'_4(X,Y,Z)$, where 
$Q'_4 = [4,9,40,-16,0,-4]$. 
The possible automorphs of $Q'_4$ are given by 
\begin{small}
\[ 
(X,Y,Z) \mapsto \pm(X,Y,Z), \pm(X-Y,-Y,-Z), \pm(-X+Z,-Y+2Z,Z), \mbox{ and } \pm(-X+Y-Z,Y-2Z,-Z).
\]
\end{small}
Also, since $Q'_4(X,Y,0) = Q_4(x,y,0) = (2x-y)^2 + 8y^2$, the hypothesis implies that  
\[
\sharp \{ (x,y,z) \in \mathbb{Z}^3 : Q_4(x,y,0) = p \} 
= 4.
\]
Therefore, $r(Q_4, p) \equiv 4 \pmod{8}$. Consequently, 
$b_p = \frac{1}{2}(r(Q_3, p) - r(Q_4, p)) \equiv 2 \pmod{4}$. 

\item Let $Q_5 := [ 128, 16, 1, 0, 0, 0 ]$ 
and $Q_6 := [ 33, 16, 4, 0, 4, 0 ]$; these are  
the forms that occur in $h_1$.
The possible automorphs of $Q_6$ are given by 
\[ (x,y,z) \mapsto \pm (x,y,z), \pm(-x,y,-z), \pm(x,-y,-x-z), \mbox{ and } \pm(-x,-y,x+z). \] 
Also, by the assumption on $p$, it cannot be written as $Q_6(x,y,z)$ with $xy=0$. Thus, 
\[r(Q_6, p) = \sharp \{ (x,y,z) \in \mathbb{Z}^3 : Q_6(x,y,z) = p \mbox{ and } xy \ne 0 \} \equiv 0 \pmod{8}. \]

Next we consider $Q_5$. The possible automorphs of $Q_5$ are given by
\[ (x,y,z) \mapsto \pm (x,y,z), \pm(x,-y,-z), \pm(-x,y,-z), \mbox{ and } \pm(-x,-y,z), \]
so  \[ \sharp \{ (x,y,z) \in \mathbb{Z}^3 : Q_5(x,y,z) = p \mbox{ and } xyz \ne 0 \} \equiv 0 \pmod{8}.\]
Also, since $p\equiv 1 \pmod{8}$, there exist (unique up to units) integers $r$, $s$ such that $p=s^2+16r^2=Q_5(0,r,s)$, so \[ \sharp \{ (x,y,z) \in \mathbb{Z}^3 : Q_5(x,y,z) = p \mbox{ and } x=0 \} =4.\]
Finally by the assumption on $p$, 
\[ \sharp \{ (x,y,z) \in \mathbb{Z}^3 : Q_5(x,y,z) = p \mbox{ and } yz =0\} =0.\]
It follows that $r(Q_5,p) \equiv 4 \pmod{8}$.
Therefore, 
$c_p = \frac{1}{2}(r(Q_5, p) - r(Q_6, p)) \equiv 2 \pmod{4}$. 
\end{enumerate}
\end{proof}

\begin{theorem}
Let $p$ be a prime with $p \equiv 3 \pmod{8}$. 
Then $b_p$ is odd. 
Thus, $L(E_{p},1) \neq 0$ and so $p$ is not $\pi/4$-congruent.
\end{theorem}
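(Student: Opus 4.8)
The plan is to argue exactly as in the preceding theorem, reducing the parity of $b_p$ to a representation count for the two ternary forms making up $g_2$. Since $g_1$ is supported on exponents $\equiv 1 \pmod 8$ and $g_2$ on exponents $\equiv 3 \pmod 8$, for a prime $p \equiv 3 \pmod 8$ we have $b_p = \sqrt{2}\,[q^p]g_2$; thus the assertion that ``$b_p$ is odd'' means that the integer $[q^p]g_2 = \tfrac12(r(Q_7,p) - r(Q_8,p))$ is odd, where $Q_7 := [3,8,43,0,-2,0]$ and $Q_8 := [8,11,12,-4,0,0]$ are the forms occurring in $g_2$. Equivalently, I must show $r(Q_7,p) - r(Q_8,p) \equiv 2 \pmod 4$. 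Granting this, $b_p \neq 0$, and Theorem~\ref{thm:Lfunc}(2) together with $L(E_1,1) \neq 0$ (a direct computation) gives $L(E_p,1) \neq 0$; by the work of Kolyvagin and the modularity of $E_p$ this forces $\mathrm{rank}(E_p)=0$, so $p$ is not $\pi/4$-congruent by Theorem~\ref{congruence_MWrank}.

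First I would compute the automorph groups of $Q_7$ and $Q_8$, as before using Tornaria's routines. In each case the set of minimal vectors is a single antipodal pair along a coordinate axis, which forces every automorph to fix that axis; a short computation then shows that the group is generated by the diagonal reflections preserving the unique cross term and has order $4$. Consequently every representation of the prime $p$ lies in an orbit of size $4$ or $2$, and the size-$2$ orbits are exactly those lying in the coordinate plane fixed by the reflection whose sign is unconstrained, namely $y=0$ for $Q_7$ and $x=0$ for $Q_8$. These planar representations are precisely the representations of $p$ by the binary forms $Q_7|_{y=0} = [3,-2,43]$ and $Q_8|_{x=0} = [11,-4,12]$, while the one coordinate axis transverse to this plane gives $8y^2 = p$ (for $Q_7$) or $8x^2=p$ (for $Q_8$), which has no solution for a prime $p>8$. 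Writing $\rho_i$ for the number of $\{\pm 1\}$-orbits of representations of $p$ by the relevant binary form, the size-$4$ orbits drop out modulo $4$ and we obtain $r(Q_i,p) \equiv 2\rho_i \pmod 4$, hence $r(Q_7,p) - r(Q_8,p) \equiv 2(\rho_7 - \rho_8) \pmod 4$.

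It remains to show that exactly one of $\rho_7,\rho_8$ is odd, which is the crux. Both $[3,-2,43]$ and $[11,-4,12]$ are primitive positive-definite forms of discriminant $-512$, the discriminant of the order of conductor $8$ in $\Q(\sqrt{-2})$; since $-512 = -2\cdot 16^2$, a prime $p$ is represented by a form of this discriminant precisely when $p \equiv 1,3 \pmod 8$. I would enumerate the eight reduced classes of discriminant $-512$ and split them into genera by the nontrivial genus character $\bigl(\tfrac{-1}{\cdot}\bigr)$: the principal genus consists of the classes representing residues $\equiv 1 \pmod 8$ (among them $[1,0,128]$ and the ambiguous class $[4,4,33]$), while the non-principal genus consists of exactly the four classes $[3,\pm 2,43]$ and $[11,\pm 4,12]$, representing residues $\equiv 3 \pmod 8$. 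For a prime $p \equiv 3 \pmod 8$ the prime ideal above $p$ lies in this non-principal genus, hence in a non-ambiguous class, so $p$ is represented by a single inverse-pair $\{C,C^{-1}\}$ drawn from these four classes. As $[3,-2,43]^{-1} = [3,2,43]$ and $[11,-4,12]^{-1} = [11,4,12]$, this pair is either $\{[3,\pm2,43]\}$ or $\{[11,\pm4,12]\}$, so $p$ is represented by exactly one of the two face forms, with multiplicity $2$. Hence $\{\rho_7,\rho_8\} = \{1,0\}$, so $\rho_7 - \rho_8$ is odd and $r(Q_7,p) - r(Q_8,p) \equiv 2 \pmod 4$, as required.

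The orbit bookkeeping of the first two steps is routine and parallels the preceding theorem. The main obstacle is the genus-theoretic step: one must verify that the two coordinate-plane sections of $Q_7$ and $Q_8$ account for precisely the two inverse-pairs comprising the non-principal genus of discriminant $-512$, with no other class of that genus able to represent $p$. This is exactly what renders the finer class-group information---which of the two forms actually represents a given $p$, something not governed by congruences modulo $8$---irrelevant to the parity, so that the clean statement holds uniformly for every prime $p \equiv 3 \pmod 8$.
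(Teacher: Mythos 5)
Your proof is correct, and up to the binary reduction it coincides with the paper's: the same order-$4$ automorph groups of $Q_7$ and $Q_8$ show that representations off the planes $y=0$ (for $Q_7$) and $x=0$ (for $Q_8$) fall into orbits of size $4$, the axial possibilities $8y^2=p$ and $8x^2=p$ are void, and hence $r(Q_i,p)\equiv 2\rho_i \pmod 4$ with $\rho_7$, $\rho_8$ counting $\pm$-pairs of representations of $p$ by the sections $3x^2-2xz+43z^2$ and $11y^2-4yz+12z^2$. You part ways at the crux. The paper transplants both planar counts into the single discriminant $-32$ form $3r^2+2rs+3s^2$ via $(r,s)=(x+z,-4z)$ (forcing $4\mid s$) and $(u,v)=(2Z-Y,2Y)$ with $Y$ necessarily odd (forcing $2\,\Vert\, v$), then uses the essentially unique representation $p=x^2+2y^2$ with $x,y$ odd (i.e.\ $h(-8)=1$ and $\Z[\sqrt{-2}]$ a PID), rewritten as $p=3a^2+2ab+3b^2$ with $ab$ even: the dichotomy $2\,\Vert\,ab$ versus $4\mid ab$ makes exactly one of the two planar counts $\equiv 2 \pmod 4$. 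You instead argue one level up, with genus theory at discriminant $-512$ itself (the order of conductor $8$ in $\Q(\sqrt{-2})$): the eight reduced classes split into two genera, the non-principal genus is exactly $[3,\pm2,43]$, $[11,\pm4,12]$ and contains no ambiguous class, so a prime $p\equiv 3\pmod 8$ is represented by exactly one inverse pair $\{C,C^{-1}\}$, with one $\pm$-orbit per class, giving the exact conclusion $\{\rho_7,\rho_8\}=\{1,0\}$. I checked your enumeration (the principal genus being $[1,0,128]$, $[4,4,33]$, $[9,\pm8,16]$, all representing $1\bmod 8$) and spot-checked $p=3,11,19$ against the displayed expansion of $g_2$; the one ingredient you should cite or justify explicitly is the standard count of proper representations of a split prime coprime to the conductor by a fixed class of a non-maximal order, namely exactly one $\pm$-pair when $C\neq C^{-1}$. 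Comparing the two routes: the paper's is more elementary and self-contained, needing only unique factorization in $\Z[\sqrt{-2}]$ plus congruence bookkeeping, while yours requires classical class-group and genus machinery but is marginally stronger (exact counts $2$ and $0$, not just residues mod $4$) and makes transparent why no congruence on $p$ can decide which of the two forms does the representing --- that is class-group data beyond genus, the precise counterpart of the undetermined parity of $ab/2$ in the paper's case split. Finally, as in the paper, ``$b_p$ odd'' must be read as oddness of the integer $[q^p]g_2$ (your support remark on $g_1$ handles this); note that oddness of this integer already gives $b_p\neq 0$ and hence $L(E_p,1)\neq 0$ via Theorem~\ref{thm:Lfunc}(2), with Coates--Wiles/Kolyvagin then giving rank zero, matching the paper's implicit final step.
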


\begin{proof}
Let $Q_7 := [ 3, 8, 43, 0, -2, 0 ]$ and 
$Q_8 := [ 8, 11, 12, -4, 0, 0 ]$, these are 
the forms that occur in $g_2$. 
The possible automorphs of $Q_7$ are given by 
\[ 
(x,y,z) \mapsto \pm(x,y,z) \mbox{ and } \pm(-x,y,-z), 
\]
while that of $Q_8$ are given by 
\[ 
(x,y,z) \mapsto \pm(x,y,z) \mbox{ and } \pm(x,-y,-z). 
\]
It follows that 
\begin{align*}
\sharp \{ (x,y,z) \in \mathbb{Z}^3 : Q_7(x,y,z) &= p \mbox{ and } y \neq 0 \} 
\equiv 0 \pmod{4}, \mbox{ and } \\
\sharp \{ (x,y,z) \in \mathbb{Z}^3 : Q_8(x,y,z) &= p \mbox{ and } x \neq 0 \} 
\equiv 0 \pmod{4}. 
\end{align*}
Let $x$, $z$ be integers such that 
$p = Q_7(x,0,z)$. Letting $r = x+z$ and $s = -4z$, we find that $p = 3r^2 + 2rs + 3s^2$, with $4 \mid s$. 
On the other hand, suppose $Y$, $Z$ are integers 
such that $p = Q_8(0,Y,Z)$. Then $Y$ must be odd, and letting 
$u = 2Z - Y$ and $v = 2Y$, we find that 
$p = 3u^2 + 2uv + 3v^2$, with $2 || v$.  
Now, $p \equiv 3 \pmod{8}$ can be uniquely (up to units) written as $p=x^2+2y^2$ with $x$, $y$ odd. Writing $x=a-b$, $y=a+b$, we see that $p$ 
has a unique (up to sign) representation 
as $p = 3a^2 + 2ab + 3b^2$ with $ab$ even. Thus, 
\begin{align*}
\sharp \{ (x,0,z) \in \mathbb{Z}^3 : Q_7(x,0,z) &= p \} 
\equiv 
\begin{cases}
0 \pmod{4}, \mbox{ if } 2 || ab  \\
2 \pmod{4}, \mbox{ if }4 \mid ab, 
\end{cases} 
\mbox{ and } \\
\sharp \{ (0,y,z) \in \mathbb{Z}^3 : Q_8(0,y,z) &= p \} 
\equiv 
\begin{cases}
2 \pmod{4}, \mbox{ if } 2 || ab  \\
0 \pmod{4}, \mbox{ if } 4 \mid ab. 
\end{cases} 
\end{align*}
Combining these with the observation above gives the desired result.
\end{proof}

\begin{theorem}\label{thm:mod16}
Let $p$ be a prime with $p \equiv 7 \pmod{16}$. 
Then the $p$-th coefficient of $h_2$ is odd. 
Thus, $L(E_{-2p},1) \neq 0$ and so $2p$ is not $3 \pi/4$-congruent.
\end{theorem}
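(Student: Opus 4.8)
The plan is to reduce the statement to a congruence between representation numbers of two ternary forms, exactly in the style of the preceding theorems; the genuinely new feature is that the finer modulus $16$ (rather than $8$) is what controls the degenerate representations. First I would isolate the analytic consequence so that the real work becomes purely combinatorial. Writing $Q_9 := [7,7,44,-4,-4,-2]$ and $Q_{10} := [12,15,15,14,4,4]$ for the two forms in the expression $h_2 = \frac14(\theta_{Q_9} - \theta_{Q_{10}})$, the $p$-th coefficient of $h_2$ equals $\frac14\bigl(r(Q_9,p) - r(Q_{10},p)\bigr)$, and since $p \equiv 7 \pmod 8$ the form $h_1$ (whose exponents are all $\equiv 1 \pmod 8$) contributes nothing to the $p$-th coefficient of $h_1 + 2\sqrt2\,h_2$. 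Hence $c_p = 2\sqrt2\cdot(\text{$p$-th coefficient of }h_2)$, so once this coefficient is shown to be odd we get $c_p \neq 0$, and Theorem~\ref{thm:Lfunc}(3) yields $L(E_{-2p},1) = \tfrac{L(E_{-2},1)}{\sqrt p}\,c_p^2 \neq 0$ (recall $L(E_{-2},1)\neq 0$). By Kolyvagin's theorem this forces $\mathrm{rank}\,E_{-2p}(\mathbb{Q}) = 0$, whence Theorem~\ref{congruence_MWrank} shows $2p$ is not $3\pi/4$-congruent. So everything reduces to $r(Q_9,p) - r(Q_{10},p) \equiv 4 \pmod 8$; note that $4$ divides this difference automatically, since $h_2$ has integral coefficients, so the content is that the quotient is \emph{odd} rather than even.

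Next I would carry out the count modulo $8$ for each form separately. Using Tornaria's \texttt{SageMath} code I would first determine the full integral automorph group of $Q_9$ and of $Q_{10}$; each contains $\pm I$ together with the evident coordinate symmetry ($x\leftrightarrow y$ for $Q_9$, $y\leftrightarrow z$ for $Q_{10}$) and further generators. The key point, as in the earlier theorems, is that any solution $v$ of $Q(v)=p$ whose orbit under this group is free — in particular with all three coordinates nonzero and $v$ lying off every reflection axis — occurs in an orbit of size divisible by $8$ and hence contributes $0 \bmod 8$. Consequently $r(Q_9,p)$ and $r(Q_{10},p)$ modulo $8$ are pinned down entirely by the \emph{degenerate} solutions: those with a vanishing coordinate and those fixed by a non-central automorph.

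The delicate step, and the reason the hypothesis is stated modulo $16$ rather than $8$, is the exact evaluation of these degenerate contributions. Setting a coordinate to zero produces a binary quadratic form, and after completing the square each such form has discriminant divisible by $64$: for instance $z=0$ in $Q_9$ gives $7x^2 - 2xy + 7y^2$ of discriminant $-192$, while $x=0$ in $Q_{10}$ gives $15y^2 + 14yz + 15z^2$ of discriminant $-704$. For binary forms whose discriminant is divisible by $64$, whether and with what multiplicity a prime $p$ is represented is governed by the $2$-adic behaviour of $p$, i.e.\ by $p \bmod 16$ (already for $x^2+48y^2$ of discriminant $-192$ one sees $p\equiv 1,9\pmod{16}$ is forced, so $p\equiv 7\pmod{16}$ excludes it). I would evaluate each axis count, together with the fixed-point loci of the non-central automorphs, exactly under the hypothesis $p\equiv 7\pmod{16}$, obtaining $r(Q_9,p)\equiv \alpha$ and $r(Q_{10},p)\equiv \beta \pmod 8$ with $\alpha-\beta\equiv 4$, which gives the oddness of the $p$-th coefficient of $h_2$.

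The main obstacle I expect is precisely this final bookkeeping. One must be certain to have the \emph{complete} automorph group of each form, since a single missing automorph corrupts the claimed $\bmod 8$ divisibility of the generic orbits; and one must compute the binary-form representation numbers on each coordinate axis correctly under the modulus $16$, so that the degenerate contributions of $Q_9$ and $Q_{10}$ differ by exactly $4$, not $0$, modulo $8$. The structural reason this clean difference is plausible is that $Q_9$ and $Q_{10}$ lie in the same genus, so their Eisenstein (genus-average) parts cancel in $\theta_{Q_9}-\theta_{Q_{10}}$ and only the cuspidal discrepancy $4h_2$ survives; converting that heuristic into a sharp parity statement is where the care lies.
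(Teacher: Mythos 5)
Your reduction to showing $r(Q_9,p)-r(Q_{10},p)\equiv 4 \pmod 8$ is correct and matches the paper, but the counting strategy you propose breaks down in a way that cannot be patched. The full automorph groups of $Q_9=[7,7,44,-4,-4,-2]$ and $Q_{10}=[12,15,15,14,4,4]$ have order $4$, not $8$: for $Q_9$ they are $\pm(x,y,z)$ and $\pm(-y,-x,-z)$, and similarly for $Q_{10}$. Moreover there are no fixed points to exploit: a fixed point of $(-y,-x,-z)$ forces $z=0$, $y=-x$, giving $Q_9=16x^2\neq p$, and a fixed point of $(y,x,z)$ forces $x=y$, giving a value divisible by $4$. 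So \emph{every} orbit is free of size exactly $4$, there are no degenerate solutions at all, and your method yields only $r(Q_i,p)\equiv 0\pmod 4$ for each form --- which is automatic (it merely restates that $h_2$ has integer coefficients) and says nothing about the parity of the $p$-th coefficient. What is missing is the parity of the \emph{number of orbits}, and this is not a congruence-determined quantity form by form: for instance, whether $p$ is represented by $Q_9$ with $z=0$ (the binary form $7x^2-2xy+7y^2$ of discriminant $-192$) depends on $p\bmod 3$ in addition to $p\bmod 16$, even though the theorem holds for all $p\equiv 7 \pmod{16}$. So the exact evaluation of axis counts ``under $p\equiv 7\pmod{16}$'' that your plan hinges on does not exist; the $p$-th coefficient of the genuine cusp form $h_2$ is not accessible by local counting of this kind.

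Your closing heuristic --- that $Q_9$ and $Q_{10}$ lie in one genus so the Eisenstein parts cancel --- is in fact the key to the paper's actual argument, but it must be used for the \emph{sum} rather than the difference. Since $Q_9$ and $Q_{10}$ are the only classes in their genus, $r(Q_9,p)+r(Q_{10},p)$ is a genus invariant, and Jones's mass-type formula (Theorem 86 of \cite{Jones}, with $d=2^{11}$, $\Omega=2^4$, $\rho=1/4$) gives $R(Q_9,p)=\frac{1}{4}h(-8p)$, hence $r(Q_9,p)+r(Q_{10},p)=h(-8p)$. The hypothesis $p\equiv 7\pmod{16}$ enters only through Pizer's theorem on the $2$-part of class numbers of imaginary quadratic fields, which yields $h(-8p)\equiv 4\pmod 8$; then
\[
4c'_p \;=\; r(Q_9,p)-r(Q_{10},p) \;=\; h(-8p)-2r(Q_{10},p)\;\equiv\; 4 \pmod 8,
\]
because $4\mid r(Q_{10},p)$ makes $2r(Q_{10},p)\equiv 0\pmod 8$. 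So the modulus $16$ lives in the class-number formula, not in the $2$-adic behaviour of the coordinate-hyperplane binary forms; your analytic reduction at the start (via Theorem~\ref{thm:Lfunc}(3) and Theorem~\ref{congruence_MWrank}) is fine and agrees with the paper.
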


\begin{proof}
Let $Q_9 := [ 7, 7, 44, -4, -4, -2 ]$ and 
$Q_{10} := [ 12, 15, 15, 14, 4, 4 ]$; these are the forms that appear in $h_2$. 
The possible automorphs of $Q_9$ are given by 
\[ 
(x,y,z) \mapsto \pm(x,y,z) \mbox{ and } \pm(-y,-x,-z),  
\]
while that of $Q_{10}$ are given by
\[ 
(x,y,z) \mapsto \pm(x,y,z) \mbox{ and } \pm(-x,-z,-y).  
\]
It follows that both $r(Q_{9},p)$ and $r(Q_{10},p)$ are divisible by $4$. Moreover, the 
classes of $Q_9$ and $Q_{10}$ are the 
only members of the genus of ternary 
quadratic forms that contain $Q_9$.  
Hence, we get 
\begin{equation}\label{for_h2}
R(Q_9, p) = \frac{r(Q_{9},p)}{4} + \frac{r(Q_{10},p)}{4}.
\end{equation}
By Theorem 86 of \cite{Jones}, we have 
\[ 
R(Q_9, p) = 2^{-t(d/\Omega^2)} h(-8p) \rho, 
\]
where $t(w)$ is the number of odd prime 
factors of $w$, 
$d$ is the determinant of the matrix $A$ of $Q_9$, 
$\Omega$ is the greatest common divisor of 
two-rowed minor determinants of $A$ and 
$\rho$ depends on the discriminant. 
In our case, we have $d= 2^{11}$, $\Omega = 2^4$ and $\rho = 1/4$. So 
$R(Q_9, p) = \frac{1}{4} h(-8p)$. 
Combining with equation (\ref{for_h2}), 
we obtain 
$r(Q_9,p) + r(Q_{10}, p) = h(-8p)$. 
Using Proposition 2 of \cite{Pizer}, 
our hypothesis on $p$ implies $h(-8p) \equiv 4 \pmod{8}$. 
Therefore, 
\[
4 c'_p = r(Q_9, p) - r(Q_{10},p) = 
h(-8p) - 2 r(Q_{10},p) \equiv 4 \pmod{8},
\] 
where $c'_p$ is the $p$-th coefficient of $h_2$. The result follows.
\end{proof}

\section{The distribution of $\pi/4$- and $3 \pi/4$-congruent numbers}\label{distribution}

Our aim in this section is to show that if $m$ is a positive integer and $a$ is a non-negative integer, then 
there are infinitely many $\pi/4$-congruent (resp.\ $3 \pi/4$-congruent) numbers of the form $a + md$, where $d$ is an 
integer. Its proof is based on the following 
observation.

\begin{proposition}\label{pi/4-family}
A square-free positive integer $n>1$ is $\pi/4$-congruent (resp.\ $3 \pi/4$-congruent) 
if and only if $n$ is the square-free part of 
\begin{equation}\label{pi/4-formula}
rs(r^2 + 2rs - s^2)
\qquad \mbox{ (resp.\ $rs(r^2 - 2rs - s^2)$)},
\end{equation} 
where $r$ and $s$ are relatively prime positive integers.
\end{proposition}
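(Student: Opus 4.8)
The plan is to bypass the elliptic curve entirely and argue directly from the Diophantine characterization derived from the cosine rule; I will prove the $\pi/4$ statement in detail and indicate how the $3\pi/4$ case runs in parallel. Recall that $n$ is $\pi/4$-congruent exactly when there exist positive rationals $a,b,c$ with $c^2 = a^2 + 2b^2 - 2ab$ and $ab = 2n$. The key first move is to complete the square: since $a^2 - 2ab + 2b^2 = (a-b)^2 + b^2$, setting $u = a-b$ and $v = b$ converts the system into a Pythagorean condition $c^2 = u^2 + v^2$ together with the normalization $(u+v)v = 2n$, subject to the sign constraints $v > 0$ (equivalent to $b>0$) and $u + v > 0$ (equivalent to $a>0$). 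Thus $\pi/4$-congruence is reduced to finding a rational point on the circle $c^2 = u^2 + v^2$ lying in the region $v>0,\ u+v>0$ and scaled so that $(u+v)v = 2n$.

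For the forward implication I would use the standard slope parametrization of the conic: any rational point with $v,c > 0$ can be written as $(u,v,c) = \lambda(r^2 - s^2,\ 2rs,\ r^2 + s^2)$ for some positive rational $\lambda$ and coprime positive integers $r,s$, obtained by taking the slope $t = s/r$ of the point $(u/c,v/c)$ in lowest terms. The single parameter $t$ sweeps out \emph{all} rational points of the circle except $(-1,0)$ (which is excluded since $v>0$), so no separate treatment of the parity of the legs is needed. Substituting into $(u+v)v = 2n$ gives $2n = \lambda^2\, 2rs(r^2 + 2rs - s^2)$, i.e. $n = \lambda^2\, rs(r^2 + 2rs - s^2)$. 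Writing $\lambda = e/f$ in lowest terms yields $n f^2 = e^2\, rs(r^2+2rs-s^2)$, whence $e^2 \mid n$; since $n$ is square-free this forces $e = 1$, so $rs(r^2+2rs-s^2) = n f^2$ and $n$ is precisely the square-free part of the positive integer $rs(r^2+2rs-s^2)$. Here the condition $u+v>0$ is the same as $r^2 + 2rs - s^2 > 0$, which holds automatically because $n>0$.

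For the converse I reverse each step. Given coprime positive integers $r,s$ for which $N := rs(r^2 + 2rs - s^2)$ has square-free part $n$ (so in particular $N>0$ and hence $r^2+2rs-s^2>0$), write $N = n m^2$ and set $\lambda = 1/m$, giving $n = \lambda^2 N$. Define $(u,v,c) = \lambda(r^2-s^2,\ 2rs,\ r^2+s^2)$, which satisfies $c^2 = u^2+v^2$, and put $a = u+v$, $b = v$, $c = c$. Then $a,b,c$ are positive rationals with $c^2 = a^2 - 2ab + 2b^2$ and $ab = (u+v)v = 2\lambda^2 rs(r^2+2rs-s^2) = 2n$, so $n$ is $\pi/4$-congruent.

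The $3\pi/4$ case is entirely parallel: there $a^2 + 2ab + 2b^2 = (a+b)^2 + b^2$, so I set $u = a+b,\ v = b$, obtaining $c^2 = u^2+v^2$ with $ab = (u-v)v = 2n$ and sign conditions $v>0,\ u-v>0$; the same parametrization then produces $n = \lambda^2\, rs(r^2 - 2rs - s^2)$, with $r^2-2rs-s^2>0$ forced by $n>0$. I expect the only delicate part to be the bookkeeping of the second paragraph: verifying that the slope parametrization genuinely covers every admissible point of the circle, checking that all sign conditions are consequences of $n>0$, and passing cleanly between the identity $n = \lambda^2 N$ and the assertion that $n$ is the square-free part of $N$. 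Everything else is a routine substitution, so I do not anticipate a substantive obstacle beyond keeping the algebra and the coprimality/square-free conventions straight.
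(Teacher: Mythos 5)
Your proof is correct, but it takes a genuinely different route from the paper's. The paper stays inside the elliptic-curve framework: it passes to the $\mathbb{Q}$-isomorphic model $E_n' : ny^2 = x^3 + 2x^2 - x$, invokes Lemma \ref{ellcurve_pi} so that $\pi/4$-congruence becomes the existence of a rational point of order greater than $2$, translates by the $2$-torsion point $(0,0)$ if necessary to arrange $x>0$, writes $x = r/s$ in lowest terms, and clears denominators to get $n(ys^2)^2 = rs(r^2+2rs-s^2)$, which gives the statement modulo $(\mathbb{Q}^{\times})^2$ in one stroke (the converse is the same identity read backwards: if $rs(r^2+2rs-s^2)=nf^2$ then $(r/s,\, f/s^2)$ is a point on $E_n'$ with $y\neq 0$). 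You bypass the curve entirely: completing the square $a^2-2ab+2b^2=(a-b)^2+b^2$ reduces the defining system to the Pythagorean conic $c^2=u^2+v^2$ with normalization $(u+v)v=2n$, and the classical slope parametrization $(u,v,c)=\lambda\,(r^2-s^2,\ 2rs,\ r^2+s^2)$ does the rest. Your key checks are all sound: rational $\lambda$ indeed makes parity bookkeeping unnecessary; the descent $\lambda=e/f$ with $\gcd(e,f)=1$, giving $e^2\mid n$ and hence $e=1$ by square-freeness, correctly converts $n=\lambda^2 N$ into the assertion that $n$ is the square-free part of $N$; and the sign conditions ($v>0$ and $u+v>0$ from $a,b>0$, and $r^2+2rs-s^2>0$ from $N>0$) are fully accounted for, with the $3\pi/4$ case running in parallel via $(a+b)^2+b^2$ exactly as you describe. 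What the paper's route buys is economy given its standing machinery, plus a byproduct it reuses later: Section \ref{rankatleasttwo} exploits precisely the point $(r/s,\,1/s^2)$ on $E_n'$ that its proof produces. What yours buys is self-containedness: it needs neither Lemma \ref{ellcurve_pi}, nor the torsion computation, nor the translation-by-$(0,0)$ trick, and it makes all positivity constraints transparent. The two arguments are, of course, two parametrizations of the same fibration, since substituting $x=r/s$ into $ny^2=x^3+2x^2-x$ yields the same binary form $rs(r^2+2rs-s^2)$ that your conic parametrization extracts.
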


\begin{proof}
We only give the proof for the case of  
$\pi/4$-congruent number as 
the proof for the other case  
is similar.

The elliptic curve $E_n': ny^2 = x^3 + 2x^2 - x$ 
is $\mathbb{Q}$-isomorphic to $E_n$, 
with the isomorphism given by $(x,y) \mapsto (nx, n^2y)$, for a point $(x,y)$ in $E_n'$. 
By Lemma \ref{ellcurve_pi}, 
$n$ is $\pi/4$-congruent if and only if 
$E_n'$ has a rational point 
$P = (x,y)$ of order greater than $2$. 
Replacing $P$ by $P + (0,0)$, if necessary, we may assume that $x>0$. 
Write $x = r/s$ with relatively prime 
positive integers $r$ and $s$. Then 
the equation 
$ny^2 = \left(\frac{r}{s} \right)^3 + 2 \left(\frac{r}{s} \right)^2 - \left(\frac{r}{s} \right)$ gives 
\[ n(ys^2)^2 = rs(r^2 + 2rs - s^2). \]
Therefore, 
$n \equiv rs(r^2 + 2rs - s^2) \pmod {(\mathbb{Q}^{\times})^2}$.

\end{proof}

\begin{theorem}
Let $m > 1$ be a positive integer and $a$ 
be a non-negative integer. 
Then the congruence class of $a \pmod{m}$
contains infinitely many $\pi/4$-congruent numbers which are  
inequivalent modulo squares of rational numbers.
A similar statement holds for $3\pi/4$-congruent numbers. 
\end{theorem}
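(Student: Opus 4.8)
The plan is to exploit the parametrization of Proposition~\ref{pi/4-family} together with the fact that $\pi/4$-congruence depends only on the square class of $n$ (so that $n$ is $\pi/4$-congruent if and only if $nk^2$ is). By that proposition, for coprime positive $r,s$ with $r^2+2rs-s^2>0$ the square-free part of $F(r,s):=rs(r^2+2rs-s^2)$ is $\pi/4$-congruent; hence $F(r,s)$ itself, and indeed $F(r,s)\cdot x^2$ for any integer $x$, is a $\pi/4$-congruent number. The crucial point is that I will work with these \emph{values} directly rather than with their square-free parts: the residue of $F(r,s)x^2$ modulo $m$ is a polynomial in $r,s,x$, whereas the square-free part is not a congruence function of the parameters. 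Thus it suffices to produce infinitely many such values lying in pairwise distinct square classes and all congruent to $a$ modulo $m$.

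First I would choose a subfamily on which $F$ becomes transparent. Completing the square gives $r^2+2rs-s^2=(r+s)^2-2s^2$, so I impose that this be a perfect square. The identity $(p^2+2q^2)^2-2(2pq)^2=(p^2-2q^2)^2$ shows that $r=p^2-2pq+2q^2$ and $s=2pq$ solve $(r+s)^2-2s^2=(p^2-2q^2)^2$, and for these $F(r,s)=rs\,(p^2-2q^2)^2$. Consequently $rs=2pq(p^2-2pq+2q^2)$ has the same square-free part as $F(r,s)$ and is therefore $\pi/4$-congruent, while its residue modulo $m$ is the explicit polynomial $2pq(p^2-2pq+2q^2)\bmod m$. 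For $p$ an odd prime with $p\nmid 2q$ one checks $\gcd(r,s)=1$, as required by the proposition.

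Next I would fix the auxiliary parameter $q$ and a residue $p_0$ coprime to $m$, and solve the congruence $2p_0q(p_0^2-2p_0q+2q^2)\,x^2\equiv a\pmod m$ for an integer $x$. Granting solvability, Dirichlet's theorem supplies infinitely many primes $p\equiv p_0\pmod m$, and for each such $p$ the number $n_p:=2pq(p^2-2pq+2q^2)\,x^2$ is $\pi/4$-congruent and satisfies $n_p\equiv a\pmod m$. Since $p$ divides $2pq(p^2-2pq+2q^2)$ exactly once, it occurs to odd order in $n_p$, so passing to an infinite subsequence of these primes (chosen so that no selected prime divides the corresponding quantity for an earlier one) the $n_p$ fall into pairwise distinct square classes. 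Applying the same argument to $r^2-2rs-s^2$ yields the statement for $3\pi/4$-congruent numbers.

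The hard part will be the solvability of $2p_0q(p_0^2-2p_0q+2q^2)\,x^2\equiv a\pmod m$ for every target $a$ and modulus $m$; equivalently, that the values of $F$ in this family, taken up to squares, represent every residue class. I would establish this by a local analysis at each prime power $\ell^e\parallel m$. The freedom in $q$ is essential: taking $q$ even forces an even power of $2$ into $rs$, so that odd square-free parts—hence odd residues, which are unreachable from the subfamily $q=1$—become available; varying $p_0$ over units lets the factors $p_0$ and $p_0^2-2p_0q+2q^2$ range over enough classes to meet the quadratic-residue condition at odd $\ell$; and when $\gcd(a,m)>1$ one arranges that the quadratic factor supplies exactly the required prime powers of $\gcd(a,m)$. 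Assembling these local solutions by the Chinese Remainder Theorem is the main technical obstacle, but at each prime dividing $m$ it reduces to a finite congruence computation.
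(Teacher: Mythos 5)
Your proposal's architecture is attractive — the parametrization $r=p^2-2pq+2q^2$, $s=2pq$ making $r^2+2rs-s^2=(p^2-2q^2)^2$ is correct, the check $\gcd(r,s)=1$ works, and the Dirichlet-plus-odd-valuation mechanism for producing pairwise distinct square classes (choosing each new prime $p$ to avoid the finitely many prime divisors of the earlier values) is sound and in fact more elementary than what the paper does. But the proposal has a genuine gap exactly where you flag it: the solvability of $2p_0q(p_0^2-2p_0q+2q^2)x^2\equiv a\pmod m$ is never proved, and the local analysis you sketch to justify it is actually false at $\ell=3$. For $3\nmid pq$ one has $p^2\equiv q^2\equiv 1\pmod 3$, so $2pq(p^2-2pq+2q^2)\equiv -pq(p^2+pq-q^2)\equiv -(pq)^2\equiv 2\pmod 3$ \emph{identically}; since $x^2\bmod 3\in\{0,1\}$, the class $a\equiv 1\pmod 3$ is unreachable no matter how you vary $p_0$ and $q$ over units, contrary to your claim that ``varying $p_0$ over units lets the factors range over enough classes'' at odd $\ell$. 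Reaching $a\equiv 1\pmod 3$ forces $9\mid q$ followed by division by the fixed square $9$ (then $V/9\equiv 2pq'\pmod 3$ does hit both unit classes) — i.e., precisely the passage away from raw values that your ``work with the values directly'' framing was meant to avoid. The prime $2$ exhibits the same defect even more starkly: your value $2pq(p^2-2pq+2q^2)x^2$ is always even, so odd $a$ modulo even $m$ is unreachable by values, and the fix (even $q$, divide by $4$) again requires dividing by fixed squares, after which the entire local analysis, including the cases $\gcd(a,m)>1$, must be redone for the quotient family. None of this is carried out, and it is the whole content of the theorem.

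For contrast, the paper sidesteps the residue-hitting problem entirely by a congruence-forcing choice of parameters in Proposition~\ref{pi/4-family}: taking $r_u=um^2+1$, $s_u=um^2$ gives $r_us_u(r_u^2+2r_us_u-s_u^2)=um^2(um^2+1)(2u^2m^4+4um^2+1)$, and after removing the \emph{fixed} square $m^2$ the two factors besides $u$ are $\equiv 1\pmod m$ by construction, so $n_u\equiv u\equiv a\pmod m$ for every $u\equiv a\pmod m$ — no Dirichlet, no Chinese Remainder Theorem, no local computations. (The paper then gets infinitude of square classes from Siegel's theorem applied to $y^2=n_{u_i}x(xm^2+1)(2x^2m^4+4xm^2+1)$, where your valuation argument would serve just as well.) If you want to salvage your route, replace the unproven solvability step with a parameter choice of this congruence-forcing type; as written, the proposal fails at its central step.
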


\begin{proof}
We prove the statement for $\pi/4$-congruent numbers; proof for 
$3\pi/4$-congruent numbers is analogous.
Define the set 
$U = \{
u \in \mathbb{N} : u \equiv a \pmod{m}
\}$. 
For each $u\in U$, set 
\[ r_u= u m^2 + 1 \mbox{ and } s_u = u m^2. \]
By Proposition \ref{pi/4-family}, since $r_u$ and $s_u$ are relatively prime, substituting these values into  (\ref{pi/4-formula}),   we get the $\pi/4$-congruent number
$\tilde{n}_{u} = u m^2(u m^2 + 1)(2u^2 m^4 + 4 u m^2 + 1) > 1$. 
We may scale by $m^2$ to obtain the 
$\pi/4$-congruent number 
\[
n_u := u(u m^2 + 1)(2u^2 m^4 + 4u m^2 + 1).
\]
Observe that 
\[ n_u \equiv u \equiv a \pmod{m}, \qquad \mbox{ for each }  u \in U. \]
Suppose  there are only finitely many inequivalent such $n_u$'s modulo $(\mathbb{Q}^{\times})^2$, say, $\{n_{u_1}, n_{u_2},\ldots, n_{u_k}\}$. Since $U$ is infinite, there must exist some  $1\le i\le k$ for which there exists an infinite subset $S$ of $U$ such that $n_s \equiv n_{u_i} \pmod{(\Q^{\times})^2}$ for each $s\in S$, i.e., $n_{u_i}n_s$ is an integer square for each $s\in S$. Thus the algebraic curve 
\[ C_{u_i} :  y^2 = n_{u_i} x(xm^2  + 1)(2x^2m^4 + 4 xm^2  + 1) \]
has infinitely many integer solutions given by $(x,y) = (s, y_s)$, where $s \in S$ and $y_s \in \mathbb{Z}$ such that $n_{u_i} n_s = y_s^2$. But this contradicts Siegel's theorem \cite{Siegel}. 
Thus, there must be infinitely many inequivalent $n_u$'s modulo 
$(\mathbb{Q}^{\times})^2$. 

\end{proof}

The limitation of the previous theorem is that 
it does not say anything about square-free values of 
the binary form $f(r,s) = rs(r^2 \pm 2rs - s^2)$
in a residue class modulo $m$. For most residue classes, 
this can be remedied using asymptotic estimates for the distribution of 
square-free values of binary forms by Gouv{\^e}a and Mazur \cite{Gouvea-Mazur} and 
Greaves \cite{Greaves}, complemented with the result by 
Stewart and Top \cite{SteTop}. 

In the following discussion, 
we let $A$, $B$ and $M$ be integers with $M \geq 1$. 
Let $F(u,v)$ be a homogeneous binary form 
of degree $d>2$ with integer coefficients. 
Let $w$ be the largest positive integer such that $w^2$ divides $F(u,v)$ for all integers $u$, $v$ with $u \equiv A \pmod{M}$, $v \equiv B \pmod{M}$. 
We consider two counting 
functions associated with square-free values of $F$ and 
recall the asymptotic estimates indicated above. 
We state versions of these results that apply to our situation.

Let $x$ be a real number. Let $N_2(x)$ denote the number of 
pairs of integers $u, v$ with 
\[ \begin{cases}
1 \leq u \leq x; \quad &u \equiv A \pmod{M}, \\
1 \leq v \leq x; \quad &v \equiv B \pmod{M}
\end{cases} 
\]
for which $F(u,v)$ is square-free. 
Let $R_2(x)$ denote the number of square-free integers $t$ with $|t| \leq x$ 
for which there exist $u \equiv A \pmod{M}$ and $v \equiv B \pmod{M}$ such that $F(u,v)=tw^2$. 

\begin{theorem}[Gouv{\^e}a-Mazur \cite{Gouvea-Mazur}, Greaves \cite{Greaves}]\label{Gouvea-Mazur-Greaves}
Let $F(u,v)$ be an integral binary form  
of degree $d$ with non-zero discriminant having no non-trivial fixed square divisors.
Suppose that all the irreducible factors 
of $F$ over $\mathbb{Q}$ are of degree $\leq 5$.  
Then 
\[N_2(x) = Cx^2 + \mathcal{O} \left(\frac{x^2}{\log x} \right), \]
where the 
$\mathcal{O}$-constant depends only on $F$ and $C$ is defined as follows:
\begin{equation}\label{GMconstant}
C := \frac{1}{M^2} \prod_{p : \text{prime}} \left( 1 - \frac{r(p^2)}{p^4} \right), 
\end{equation}
where $r(p^2) := (\gcd(p^2, M))^2 \rho(p^2)$ and 
$\rho(p^2)$ denotes the number of 
noncongruent modulo $p^2$ solutions of $F(u,v) \equiv 0 \pmod{p^2}$ with 
$u,v  \equiv A, B \pmod{ \gcd(p^2, M)}$. 
\end{theorem}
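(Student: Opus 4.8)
The plan is to detect square-free values with the Möbius function and then carry out an inclusion–exclusion (square) sieve, so that the count becomes a lattice-point problem governed by local densities. Writing $\mathbf{1}_{\mathrm{sqfree}}(N)=\sum_{d^2\mid N}\mu(d)$ for $N\neq 0$, inserting this into the definition of $N_2(x)$, and interchanging the order of summation, I would obtain
\[
N_2(x)=\sum_{d\ge 1}\mu(d)\,S_d(x),
\]
where $S_d(x)$ denotes the number of pairs $(u,v)$ with $1\le u,v\le x$, $u\equiv A$ and $v\equiv B\pmod{M}$, and $d^2\mid F(u,v)$. (The finitely many $(u,v)$ in the box on which $F(u,v)=0$ lie on $O(1)$ lines and contribute $O(x)$, so they are harmless.) The argument then splits into three parts: evaluating $S_d(x)$ for a fixed $d$, summing the contribution of small $d$ to produce $Cx^2$, and controlling the tail coming from large $d$.

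For a fixed $d$, the conditions $d^2\mid F(u,v)$ together with $u\equiv A$, $v\equiv B\pmod{M}$ cut out a union of residue classes modulo $\mathrm{lcm}(d^2,M)$, and a standard lattice-point count in $[1,x]^2$ gives
\[
S_d(x)=\frac{r(d^2)}{d^4 M^2}\,x^2+O\!\left(\frac{\rho(d^2)}{d^2}\,x+\rho(d^2)\right),
\]
the main density matching the local factor entering \eqref{GMconstant} (primes dividing $M$ require separate but routine bookkeeping because of the interaction with the congruence conditions). Summing $\mu(d)S_d(x)$ over $d\le z$ for a truncation level $z$ (a small power of $\log x$), using the multiplicativity of $d\mapsto r(d^2)/d^4$ to recognize the partial Euler product, and then extending the product to all primes, I would produce the constant $C$. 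Its convergence is immediate from $\rho(p^2)=O(p^2)$, a consequence of the nonzero discriminant (no repeated factors), which yields $r(p^2)/p^4=O(p^{-2})$ for $p\nmid M$; the hypothesis of no non-trivial fixed square divisor guarantees $\rho(p^2)<p^4$, so every local factor, and hence $C$, is positive. The passage from the finite to the infinite product, together with the accumulated boundary errors for $d\le z$, is absorbed into $O(x^2/\log x)$.

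The main obstacle is the tail estimate: one must show $\sum_{d>z}|S_d(x)|=O(x^2/\log x)$, i.e.\ that very few $(u,v)$ in the box have $F(u,v)$ divisible by a large square. Since $|F(u,v)|\ll x^{d}$ on the box, only $d\ll x^{d/2}$ occur, and I would factor $F=\prod_i G_i$ into $\Q$-irreducible pieces. A square divisor of $F(u,v)$ arises either from a square dividing a single $G_i(u,v)$ or from a prime dividing two distinct factors; the latter is controlled elementarily because the pairwise resultants are nonzero integers, so a large prime cannot divide two factors simultaneously (this is precisely where the nonvanishing-discriminant hypothesis enters). The genuinely hard case is a large square dividing a single irreducible factor $G_i$ of degree up to $5$: here elementary divisor counting is insufficient, and I would invoke the power-free sieve of Greaves, whose range of validity is exactly what forces the restriction to irreducible factors of degree $\le 5$ and is responsible for the relatively weak error term $O(x^2/\log x)$ rather than a power saving. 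Quoting this sieve bound as the key external input and assembling it with the two preceding steps then delivers the stated asymptotic.
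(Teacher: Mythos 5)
First, a point of comparison: the paper does \emph{not} prove this theorem at all --- it is imported verbatim from Gouv\^ea--Mazur \cite{Gouvea-Mazur} and Greaves \cite{Greaves}, so there is no internal proof to measure you against; the comparison must be with the cited sources. Against those, your skeleton is the standard one and does match how the actual proofs run: M\"obius inclusion--exclusion, lattice-point counts in residue classes modulo $\mathrm{lcm}(d^2,M)$ producing the multiplicative local densities whose Euler product is the constant $C$ of \eqref{GMconstant}, nonzero pairwise resultants to prevent a large prime from dividing two distinct irreducible factors, and the power-free sieve of Greaves for a large prime square dividing a single irreducible factor --- which is indeed where the degree restriction (Greaves in fact reaches degree $6$ for square-free values, so the paper's ``$\leq 5$'' is on the safe side) and the weak error term $O(x^2/\log x)$ originate. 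Since all of the genuinely hard analytic content is exactly the quoted sieve, your write-up is, like the paper's, ultimately a citation; that is acceptable here. One structural caution: bounding $\sum_{d>z}|S_d(x)|$ over \emph{all} integers $d>z$ is the wrong reduction --- a single pair $(u,v)$ whose value carries one large square divisor is counted for $x^{o(1)}$-many moduli $d$, and that multiplicity already overwhelms $x^2/\log x$ if handled naively. The standard move (and Greaves's) is to majorize the discarded terms by the number of pairs with $p^2\mid F(u,v)$ for a single \emph{prime} $p>z$, and only then split into the resultant case and the single-factor case.

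The one outright error in your proposal is the positivity claim for $C$. The hypothesis of ``no non-trivial fixed square divisors'' quantifies over \emph{all} integer pairs $(u,v)$, not over pairs restricted to the progression $u\equiv A$, $v\equiv B \pmod{M}$; consequently it does not give $r(p^2)<p^4$ for primes $p\mid M$. There one has $r(p^2)=\gcd(p^2,M)^2\rho(p^2)$ with $\rho(p^2)\leq \bigl(p^2/\gcd(p^2,M)\bigr)^2$, and equality is possible: for instance $F(u,v)=uv(\cdots)$ with $p^2\mid M$ and $A\equiv 0\pmod{p^2}$ kills the local factor at $p$ entirely. The paper is explicit about this immediately after the theorem (``If $M>1$ then $C$ may vanish''), and this is precisely why, in the proof of Theorem~\ref{density_pi/4}, the authors must verify $C>0$ separately using Gouv\^ea--Mazur's nonvanishing criterion (their Proposition~5), checking the primes $p\mid M$ case by case, rather than getting positivity for free from the fixed-divisor hypothesis. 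Since the theorem as stated asserts only the asymptotic with the constant $C$ as defined, your sketch of the asymptotic itself is not invalidated; but the inference ``every local factor, and hence $C$, is positive'' is false in the stated generality, and if propagated into the application (where $C>0$ is exactly what is needed to conclude infinitude of square-free $\gamma$-congruent numbers) it would be a genuine gap.
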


In the above theorem $F(u,v)$ having no non-trivial fixed square divisors means that if $w_0$ is the largest positive integer such that $w_0^2$ divides $F(u,v)$ for all integers $u,\ v$ then $w_0=1$. Thus, it follows from the definition of $C$ that $C>0$ if $M=1$. If $M>1$ then $C$ may vanish. Gouv{\^e}a and Mazur (\cite{Gouvea-Mazur}, Proposition 5) 
gave a criterion for non-vanishing of this constant under the condition that the irreducible factors of $F(u,v)$ have degree $\leq 3$.  For the situation of interest to us (Theorem~\ref{density_pi/4}), 
we will verify that this constant is always positive.


\begin{theorem}\label{density_pi/4}
Let $m > 1$ be a positive integer and $a$ 
be an integer such that $\gcd(a,m)$ is square-free.  
Suppose $\gamma \in \{ \pi/4, 3\pi/4 \}$. 
Then the congruence class of $a \pmod{m}$
contains infinitely many square-free 
$\gamma$-congruent numbers. 
\end{theorem}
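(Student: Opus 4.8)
The plan is to combine the parametrisation of Proposition~\ref{pi/4-family} with the square-free-values machinery recalled in Theorem~\ref{Gouvea-Mazur-Greaves}. Put
\[ F(u,v) = uv\bigl(u^2 + 2uv - v^2\bigr) \ \ (\gamma = \pi/4), \qquad F(u,v) = uv\bigl(u^2 - 2uv - v^2\bigr) \ \ (\gamma = 3\pi/4). \]
In both cases $F$ is an integral binary quartic that factors over $\Q$ into the two linear forms $u,v$ and an irreducible quadratic of discriminant $8$; its four roots in $\mathbb{P}^1(\overline{\Q})$ are distinct, so $\operatorname{disc}F \neq 0$ and every irreducible factor has degree at most $2$. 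By Proposition~\ref{pi/4-family} the square-free $\gamma$-congruent numbers are precisely the square-free parts of $F(r,s)$ over coprime positive pairs $(r,s)$. Two features of $F$ guide the argument: a short parity check shows $F(r,s)$ is always even, so no square-free value of $F$ is odd, and reaching an odd target forces the factor $4$ to be absorbed into a fixed square divisor; and if a prime $q$ divides both $r,s$ then $q^4 \mid F(r,s)$, so in studying square-free parts one may always pass to $\gcd(r,s)=1$.

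First I would reduce to a counting statement on a single progression. The aim is to choose residues $A,B$, a modulus $M$ with $m \mid M$, and the associated fixed square divisor $w$ of $F$ on the progression $u\equiv A,\ v\equiv B \pmod M$, so that $F(u,v)/w^2 \equiv a \pmod m$ holds identically there. Then every square-free value of $F/w^2$ actually attained — that is, every square-free part produced by the progression — lies in the class $a \bmod m$, and is a square-free $\gamma$-congruent number by Proposition~\ref{pi/4-family}. Setting this up amounts, prime by prime, to matching the valuation of $F$ forced on the class (captured by $w$) with the intended valuation $v_p(t)\le 1$ of the square-free part $t$, and then fixing the residue $t \equiv a \pmod m$. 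This is exactly where the hypothesis that $\gcd(a,m)$ be square-free is used, and the hypothesis is also necessary, since any $n\equiv a\pmod m$ satisfies $\gcd(n,m)=\gcd(a,m)$. The square-freeness guarantees $\min(v_p(a),v_p(m))\le 1$ at every prime, so the intended part $t$ has $v_p(t)\le 1$ and is not obstructed: the forced factor $2$ is pushed into $w$ (for an odd target one fixes, e.g., $r\equiv 4 \pmod 8$ with $s$ odd, giving $v_2(F)=2$), a unit target at $p$ is met by representing the required residue of $F$, and a target with $v_p=1$ is met by prescribing $v_p(r)=1$.

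With such a progression in hand I would produce infinitely many distinct square-free parts in it. When $w=1$ — which is possible when $F$ itself can be square-free in the class, e.g. for even targets or for odd $m$ — Theorem~\ref{Gouvea-Mazur-Greaves} applies, and I would check that the constant $C$ of \eqref{GMconstant} is positive: since all irreducible factors of $F$ have degree at most $3$, the non-vanishing criterion of Gouv\^ea and Mazur (\cite{Gouvea-Mazur}, Proposition~5) is available, and the $p$-adic choices above ensure that no Euler factor $1-r(p^2)/p^4$ vanishes, so $N_2(x)=Cx^2+O(x^2/\log x)\to\infty$. When $w>1$ — forced, for instance, for odd targets with $m$ even — $F$ is never square-free and I would instead invoke the lower bound of Stewart and Top~\cite{SteTop} for the counting function $R_2(x)$ of distinct square-free parts. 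To keep the values positive I would run either estimate on a sub-box lying in the sector where $F>0$: for $\gamma=\pi/4$ this sector is $u/v>\sqrt2-1$ and for $\gamma=3\pi/4$ it is $u/v>1+\sqrt2$, so a box such as $\{\,x<u\le 2x,\ 1\le v\le x\,\}$ (resp.\ $\{\,3x<u\le 4x,\ 1\le v\le x\,\}$) contributes $\gg x^2$ square-free parts of constant sign. Since $\deg F=4\ge 3$ and $\operatorname{disc}F\neq 0$, each value is attained only finitely often by Siegel's theorem~\cite{Siegel}; hence the infinitely many pairs yield infinitely many distinct positive square-free parts $\equiv a\pmod m$, completing the proof.

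I expect the crux to be the construction of the progression in the second paragraph: one must pin the square-free part modulo $m$ through the interplay of the forced valuations $v_p(F)$, the fixed square divisor $w$, and the residue $F/w^2$, while simultaneously keeping the Gouv\^ea--Mazur constant positive (or, for $w>1$, keeping the Stewart--Top count nonempty), and it is precisely the square-freeness of $\gcd(a,m)$ that stops these demands from clashing. The parity obstruction (that $F$ is always even) is the cleanest instance of this interplay and a good model for the general prime-by-prime bookkeeping; by comparison, the reduction to a sub-box for positivity and the use of Siegel's theorem for distinctness are routine.
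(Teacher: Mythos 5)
Your overall architecture — Proposition~\ref{pi/4-family} plus the square-free sieve of Theorem~\ref{Gouvea-Mazur-Greaves}, with positivity of the constant $C$ checked via Gouv\^ea--Mazur's Proposition~5 — is exactly the paper's, but your execution of what you yourself identify as the crux (pinning the square-free part in the class $a \bmod m$) contains a genuine gap. The claim that ``a unit target at $p$ is met by representing the required residue of $F$'' is false. Writing $F(\lambda v, v) = v^4\,\lambda(\lambda^2+2\lambda-1)$ and noting $v^4 \in \{0,1\} \bmod 3$, one checks that $uv(u^2+2uv-v^2)$ takes only the values $\{0,2\}$ modulo $3$, and only $\{0,2,4\}$ modulo $5$; similarly $uv(u^2-2uv-v^2)$ omits the residue $2 \bmod 3$. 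So already for $m=3$, $a=1$, $\gamma=\pi/4$, no progression of the kind you describe exists with $3\nmid w$: the unit residue is simply never represented, and your prime-by-prime construction cannot start. This is not an accident of small primes — these quartics are a fourth power times a cubic in $\lambda = u/v$, so their value sets typically miss cosets of fourth powers.

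The repair is the maneuver you already deploy at $2$ and at valuation-one targets, applied at unit targets as well: force $v_p(u)$ even (say $v_p(u)=2$), absorb $p^2$ into $w^2$, after which $F/p^2 \equiv -u'v^3\cdot(\mathrm{unit}) \pmod{p}$ is surjective onto units. The paper does this uniformly, in one stroke, via the substitution $r = um^2$, $s=v$: then $f(um^2,v) = m^2F(u,v)$ with $F(u,v) = uv(u^2m^4+2uvm^2-v^2)$, a form with no fixed square divisor satisfying $F \equiv -uv^3 \pmod{m}$, so the single progression $u\equiv a$, $v\equiv -1 \pmod m$ hits every class; this also keeps $w=1$, so only the $N_2$ count is ever needed and your entire $w>1$/Stewart--Top branch becomes unnecessary. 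The hypothesis that $\gcd(a,m)$ is square-free then enters precisely where you predicted: at primes with $p^2\mid m$ it gives $F(a,-1)\equiv a \not\equiv 0 \pmod{p^2}$, keeping the local factor $C_p$ nonzero, while for $p\,\Vert\, m$ the nonvanishing of $\partial F/\partial u \equiv -v^3$ at $(a,-1)$ rules out a singular point. Two smaller remarks: your attention to the sign of $F$ (restricting to the sector where $F>0$) and to distinctness of values is actually more careful than the paper, which passes over the sign in silence; but Theorem~\ref{Gouvea-Mazur-Greaves} as stated counts square boxes, so your sub-box assertion needs the region version of Greaves' sieve rather than a formal difference of $N_2$'s.
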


\begin{proof} 
We prove the result for $\gamma = \pi/4$.  
The proof for the case $\gamma = 3\pi/4$ 
is similar.
Consider the binary form 
\[ f(r,s) = rs(r^2 + 2rs - s^2) \]
and set $r = um^2$ and $s=v$. Then 
\[f(r,s) =  uv m^2 (u^2 m^4 + 2uv m^2 - v^2). \]
Thus, the square-free values of the binary form \[F(u,v) = uv (u^2 m^4 + 2uv m^2 - v^2)\] gives square-free parts of $f(r,s)$ and hence square-free $\gamma$-congruent numbers. 
 We apply Theorem~\ref{Gouvea-Mazur-Greaves} to the above $F$ with $M=m$, $A = a$ and $B = -1$.
If $u \equiv a \pmod{m}$ and $v \equiv -1 \pmod{m}$ 
then $F(u,v) \equiv u(-v)^3 \equiv a \pmod{m}$.
We claim that $F$ achieves square-free values 
infinitely often for $u \equiv a \pmod{m}$ and $v \equiv -1 \pmod{m}$. 

Observe that $F$ has non-zero discriminant, has no non-trivial fixed square divisors and all its irreducible factors over $\Q$ are of degree $\le 2$, so by  
Theorem \ref{Gouvea-Mazur-Greaves}, 
\[N_2(x) = Cx^2 + \mathcal{O} \left(\frac{x^2}{\log x} \right), \]
where the constant $C$ is given by (\ref{GMconstant}). 
We now show that $C$ is positive by applying Gouv{\^e}a-Mazur's criterion (\cite{Gouvea-Mazur}, Proposition 5). 

Put $C_p := 1 - r(p^2)/p^4$.
Note that no prime divides all the coefficients of $F(u,v)$, 
since the coefficient of $u v^3$ in $F(u,v)$ is $-1$. 
This implies that we are not in case (2) of the said proposition. If $p$ 
does not divide $m$, then by statement (5) of the proposition, $C_p$ is non-zero. Suppose $p$ divides $m$. 
We now appeal to statement (3) of the proposition. 
If $p^2$ divides $m$ then, since $\gcd(a,m)$ is square-free, 
$F(a, -1) \equiv a \not\equiv 0 \pmod{p^2}$, consequently  $C_p \neq 0$ by statement (3)-(i).  
Suppose $\text{ord}_p(m)=1$. 
The partial derivative with respect to $u$ of $F$ 
modulo $p$, given by 
\[ \frac{\partial F}{\partial u}(u,v) = - v^3 \pmod{p}, \]
does not vanish at $(u,v) = (a, -1)$. Thus, 
the point $(a, -1)$ does not represent a singular point 
of $F(u,v) \equiv 0 \pmod{p}$ and so by statement (3)-(ii), $C_p \neq 0$. By statement (1) of the proposition, it follows that $C$ is positive.

Consequently, the number of pairs $u,v$ with 
$(u,v) \equiv (a,-1) \pmod{m}$ for which $F(u,v)$ is 
square-free has positive density.  
In particular, the number of square-free values of 
$F(u,v)$ in the residue class $a \pmod{m}$ is infinite.
So this residue class contains infinitely many 
$\pi/4$-congruent numbers. 

\end{proof}

\section{Two subfamilies of $E_n$ of rank $\ge 2$}\label{rankatleasttwo}

Mestre \cite{Mestre} proved that for a given elliptic curve there are infinitely many quadratic twists of rank at least $2$, and  Stewart and Top quantified this result in ~\cite[Theorem 3]{SteTop}. In this section, we give two families of $E_n$ of rank $\ge 2$. 

The first family essentially comes from the work of Stewart and Top. By doing a suitable transformation $E_1$ can be given by its short Weierstrass form: $J: y^2 = x^3 - 3024x + 58752$. Note that $3024$ can be expressed by the quadratic form $a^2+ac+c^2$ with $a=60$ and $c=-12$. We now use ~\cite[Theorem 4]{SteTop}. Define 
\[d_t= 2^6 3^3 (t^2 - 8t - 2)(t^2 + t + 1)(2t^2 - 4t - 7)(7t^2 + 10t + 1). \]
Then $J_{d_t}: d_ty^2 = x^3 - 3024x + 58752$ has independent points $M_t:=\left(\frac{12t^2 + 120t + 48}{t^2+t+1}, \frac{1}{(t^2+t+1)^2} \right)$, $N_t:=\left(\frac{48t^2 - 24t - 60}{t^2+t+1}, \frac{1}{(t^2+t+1)^2} \right)$ over $\Q(t)$. It follows from Silverman's specialization theorem (\cite[Lemma 1]{SteTop}) that $J_{d_t}$ has rank at least two for all but finitely many rational numbers $t$, while the result of Stewart and Top implies that for $N$ large enough, the number of square free integers $d$ with $|d|\le N$ such that $J_d$, and hence $E_d$, has rank at least $2$ is at least $\mathcal{O}(N^{1/4})$.
Observe that $d_t$ is positive for almost all integer values of $t$, so there are infinitely many $\pi/4$-congruent numbers $n$ such that $E_n$  has rank greater than or equal to  $2$.

We now look at another subfamily by considering $E_1$ in its original form. 
Recall that for a positive integer $n$ of the form $n=rs(r^2+2rs-s^2)$, where $r, s$ are positive integers, the curve $E_n':ny^2=x^3+2x^2-x$ has a rational (non-torsion) point $(\frac{r}{s}, \frac{1}{s^2})$  
and so the isomorphic curve $E_n$ has a non-torsion rational point.

Let $t$ be an integer such that $t \ne 1$. Set $r=t^2+1$, $s=2$ and define 
\[n_t= rs(r^2+2rs-s^2)= 2(t^2+1)(t^4+6t^2+1).\] 
So the curve $E_{n_t}'$ has a point $P_t:=\left(\frac{t^2+1}{2}, \frac{1}{4} \right)$, and the isomorphic curve $E_{n_tt^2}'$ has a point $\left(\frac{t^2+1}{2t^2}, \frac{1}{4t^4} \right)$ giving another point $Q_t:=\left(\frac{t^2+1}{2t^2}, \frac{1}{4t^3} \right)$ on $E_{n_t}'$. 
Observe that $P_t$ and $Q_t$ are non-torsion points as the torsion subgroup of $E_{n_t}'$ is $\Z/2\Z$. 


Thanks to the specialization theorem, to show independence of $P_t$ and $Q_t$, it is enough to show 
independence of the two points for a particular rational value of $t$. For $t=2$, $n_t=410$ and 
\[E_{n_t}: y^2 = x^3 + 820x^2 - 168100x. \]
The points corresponding to $P_t$, $Q_t$ are 
$(1025, 42025)$, $(1025/4, 42025/8)$ respectively and we verify using {\tt MAGMA} \cite{Magma} that these points are linearly independent. 
Thus, $E_{n_t}$ has rank at least two for all but finitely many rational numbers $t$.

Consider the quadratic form 
$F(u,v)=2(u^2+v^2)(u^4+6u^2v^2+v^4)$.  For $t=\frac{u}{v}$, the numbers $n_t$ and $F(u,v)$ are the same up to rational squares and so give the same set of $\pi/4$-congruent numbers. 
Note that $F(u,v)$ is a degree $6$ form with non-zero discriminant and all its irreducible factors have degree $\le 4$. It is easy to see that $F(u,v)$ has no non-trivial square divisors. 
Thus, by Theorem~\ref{Gouvea-Mazur-Greaves}, the number of pairs $(u,v)$ such that $F(u,v)$ is square-free has a positive density. 

Preliminary computations show that $F(u,v)$ (and its square-free part) does not belong to all the congruence class modulo $m$ for a given integer $m>1$. 
Nevertheless, we can apply Stewart-Top~\cite{SteTop} to the quadratic form $F$ to obtain the following statement.
\begin{corollary}
Let $A, B, m$ be integers with $m>1$. There exists a positive constant $\kappa$ depending on $m$ and $F$ such that for sufficiently large positive integer $N$, the number of square free integers in $[-N, N]$ of the form $F(u,v)$ (up to some fixed square $w^2$) with $u\equiv A \pmod{m}$, $v\equiv B \pmod{m}$ is greater than $\kappa N^{1/3}$.
Consequently, there are infinitely many $\pi/4$-congruent numbers $n$ of the form $F(u,v)$ with $u,v$ as above such that $\mathrm{rank}(E_n)\ge 2$.
\end{corollary}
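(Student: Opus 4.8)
The plan is to combine two ingredients that are already available in the excerpt: the rank lower bound coming from the two independent points $P_t$ and $Q_t$ on the family $E_{n_t}'$, and the Stewart--Top counting result \cite{SteTop} applied to the sextic form $F(u,v) = 2(u^2+v^2)(u^4+6u^2v^2+v^4)$. The first task is to establish the quantitative count. I would invoke \cite[Theorem 3]{SteTop} directly: it provides, for a binary form of degree $d$ with the requisite non-degeneracy, a lower bound of the shape $\kappa N^{2/d}$ for the number of square-free integers of absolute value at most $N$ that are represented (up to a fixed square factor $w^2$) by $F(u,v)$ subject to a congruence constraint $u \equiv A$, $v \equiv B \pmod m$. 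Since $F$ has degree $6$, this yields the exponent $2/6 = 1/3$, matching the statement. The hypotheses needed --- that $F$ has non-zero discriminant, no non-trivial fixed square divisor, and irreducible factors of bounded degree --- have already been recorded in the paragraph preceding the corollary, so this step is essentially a citation once the congruence version of Stewart--Top is pinned down.

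Next I would pass from the counting statement to the conclusion about $\pi/4$-congruent numbers of rank at least $2$. Each square-free integer $n$ represented by $F(u,v)$ (up to the square $w^2$) is, by the earlier discussion identifying $F(u,v)$ with $n_t$ up to rational squares and by Proposition~\ref{pi/4-family}, a $\pi/4$-congruent number. Simultaneously, for $t = u/v$ the curve $E_{n_t}'$ carries the two independent non-torsion points $P_t$, $Q_t$ constructed above, whose independence was verified at $t=2$ and hence, by Silverman's specialization theorem \cite[Lemma 1]{SteTop}, holds for all but finitely many rational $t$. Thus for all but finitely many of the pairs $(u,v)$ produced by the count, the isomorphic curve $E_n$ has rank at least $2$.

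The one point requiring care --- and the main obstacle --- is reconciling the two parametrizations. The specialization theorem guarantees independence of $P_t$ and $Q_t$ only outside a \emph{finite} exceptional set of $t$-values, whereas the Stewart--Top bound counts \emph{integers} $n$, and a priori many distinct pairs $(u,v)$ could give the same square-free $n$ (or the exceptional $t$ could, in principle, absorb a positive proportion of the represented values). I would handle this by noting that the exceptional set is a fixed finite set of rationals $t = u/v$, cutting out at most finitely many lines $uv' = u'v$ through the origin in the $(u,v)$-plane; since the Stewart--Top count grows like $N^{1/3} \to \infty$ while the exceptional locus contributes only $O(1)$ distinct square-free values, discarding it changes neither the infinitude nor the growth order. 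Therefore infinitely many square-free $n = F(u,v)$ (up to $w^2$) in the prescribed congruence class are $\pi/4$-congruent with $\mathrm{rank}(E_n) \geq 2$, which is exactly the asserted consequence.
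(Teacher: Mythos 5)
Your proposal is correct and is essentially the argument the paper intends (the paper states this corollary without a separate proof, as a direct application of Stewart--Top to $F$): the congruence-constrained square-free counting bound gives $\kappa N^{2/6}=\kappa N^{1/3}$, the identification of $F(u,v)$ with $n_{u/v}$ modulo rational squares together with the specialization of $P_t,Q_t$ gives rank $\ge 2$ outside a finite set of $t$, and your observation that each exceptional $t$ cuts out a single line $\{(\lambda p,\lambda q)\}$ on which $F(\lambda p,\lambda q)=\lambda^6F(p,q)$ has a \emph{fixed} square-free part, so the exceptional locus costs only $O(1)$ of the counted values, is exactly the bookkeeping needed to reconcile the two parametrizations. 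One small citation correction: the counting result you need is Theorem~1 of \cite{SteTop} (which allows the congruence conditions $u\equiv A$, $v\equiv B \pmod m$ and the fixed square $w^2$), not their Theorem~3, which is the quantified rank-two-twists statement the paper cites earlier for a different purpose.
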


{\bf Remark.} Given $A$, $B$ and $m$, a similar statement holds for $\pi/4$-congruent numbers of the form $d_t$, though in this case as we observed earlier, the order is at least $\mathcal{O}(N^{1/4})$. 

\section{Tilings of the square with congruent right triangles with condition on the perpendicular sides}\label{tiling}

A tiling (or dissection) of a polygon 
$A$ is a decomposition of $A$ into 
pairwise non-overlapping polygons. 
Recently, Laczkovich established a connection between the 
dissection of an equilateral triangle with rational sides  
into congruent triangles and the 
$\pi/3$- and $2 \pi /3$-congruent number 
elliptic curves. 

\begin{theorem}[\cite{Lacz2}, Theorem 1.1]
For every positive and square-free integer $n$ the following are equivalent.
\begin{enumerate}
\item There is a positive integer $k$ 
such that an equilateral triangle with rational sides can 
be dissected into $nk^2$ congruent triangles.
\item Either $n \leq 3$, or 
at least one of the elliptic curves $y^2 = x(x-n)(x+3n)$ or $y^2 = x(x+n)(x-3n)$ has a rational point with $y\ne 0$.
\end{enumerate}
\end{theorem}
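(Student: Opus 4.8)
The plan is to prove the equivalence by passing through the $\pi/3$- and $2\pi/3$-congruent number characterizations. Since $\cos(\pi/3)=1/2$ (so $r=2$, $s=1$, $\sqrt{r^2-s^2}=\sqrt3$) the associated curve is $y^2=x(x-n)(x+3n)$, and since $\cos(2\pi/3)=-1/2$ (so $s=-1$, $r=2$) the associated curve is $y^2=x(x+n)(x-3n)$; by the $\theta$-congruent number dictionary (the direct analogue of Lemma~\ref{ellcurve_pi}) each curve carries a rational point with $y\ne 0$ exactly when $n$ is $\pi/3$- (resp.\ $2\pi/3$-) congruent, i.e.\ when there are positive rationals $a,b,c$ with $c^2=a^2+b^2\mp ab$ and $ab=4n$. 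Hence statement (2) is equivalent to the arithmetic condition ``$n\le 3$, or $n$ is $\pi/3$- or $2\pi/3$-congruent,'' and the theorem reduces to matching this against the existence of a dissection of the rational equilateral triangle into $nk^2$ congruent pieces.

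For the forward implication (1)$\Rightarrow$(2), suppose the equilateral triangle $T$ of rational side $a$ is tiled by $N=nk^2$ congruent copies of a triangle $\Delta$. The essential input is a structural analysis of tilings of a triangle by mutually congruent triangles: outside the degenerate small cases (absorbed into $n\le 3$) one shows that $\Delta$ must carry an angle equal to $\pi/3$ or $2\pi/3$, these being the only angles that can combine correctly around the vertices and along the edges of $T$, and that the side lengths of $\Delta$ are rational multiples of $a$, hence rational. Granting this, each tile has area $\mathrm{Area}(T)/N=\frac{\sqrt3}{4}a^2/(nk^2)$; if the $\pi/3$-angle of $\Delta$ is flanked by sides $p,q$, then $\frac{\sqrt3}{4}pq=\frac{\sqrt3}{4}a^2/(nk^2)$, so $pq=a^2/(nk^2)$. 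Rescaling $\Delta$ by the rational factor $\lambda=2nk/a$ yields a triangle with the same $\pi/3$ angle, rational sides $\lambda p,\lambda q$, product of legs $\lambda^2pq=4n$, and opposite side $\lambda c$ satisfying $(\lambda c)^2=(\lambda p)^2+(\lambda q)^2-(\lambda p)(\lambda q)$; this exhibits $n$ as $\pi/3$-congruent (and symmetrically as $2\pi/3$-congruent when the distinguished angle is $2\pi/3$).

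For the converse (2)$\Rightarrow$(1) the cases $n=1,2,3$ are settled by exhibiting explicit dissections of the equilateral triangle into $1$, $2$, $3$ congruent triangles (or a square multiple thereof). For $n>3$ with, say, $n$ being $\pi/3$-congruent, I would take the witnessing triangle $\Delta$ with a $\pi/3$ angle and rational sides and assemble $T$ from copies of $\Delta$ and its mirror image in a periodic ``staircase'' pattern; rational commensurability of the sides of $\Delta$ with those of $T$ forces finitely many such rows to close up into $T$, with a total tile count of the required shape $nk^2$. The main obstacle is precisely the structural classification used in the forward direction --- proving that a congruent-triangle tiling of the equilateral triangle forces a $\pi/3$ or $2\pi/3$ angle together with rational side lengths; this combinatorial-algebraic core (controlling the coordinates of the tiling vertices and the angle sums at interior and boundary vertices) is the genuinely hard part, while the area bookkeeping and the explicit constructions are comparatively routine.
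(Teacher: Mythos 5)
First, a point of order: the paper does not prove this statement at all --- it is quoted verbatim from Laczkovich \cite{Lacz2} as background for the authors' own square-tiling proposition, so there is no internal proof to compare against; your proposal has to stand as a reconstruction of Laczkovich's argument. Your reduction of condition (2) to ``$n\le 3$ or $n$ is $\pi/3$- or $2\pi/3$-congruent'' is correct and is exactly the identification the paper makes (citing Yoshida), and your area bookkeeping and rescaling $\lambda=2nk/a$ are fine. But the proposal is a plan, not a proof: both of its load-bearing steps are left unproved, and one of them is false as you state it. In the direction (1)$\Rightarrow$(2) you assume that, outside degenerate cases ``absorbed into $n\le 3$,'' the tile must have an angle $\pi/3$ or $2\pi/3$ \emph{and rational sides}. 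The half-equilateral right triangle (angles $\pi/2,\pi/3,\pi/6$, side ratios $1:\sqrt{3}:2$) is a counterexample: cutting the equilateral triangle by its three altitudes and refining gives dissections into $6k^2$ congruent such tiles, so $n=6>3$, the tile does have a $\pi/3$ angle, but its sides are \emph{not} all rational multiples of the side of $T$, so your rescaling argument breaks down. The theorem survives only because $6$ happens to be $\pi/3$-congruent by a different witness (e.g.\ $a=3$, $b=8$, $c=7$ with $49=9+64-24$), and Laczkovich must treat these special right-triangle tiles as a separate family. The full classification of admissible tiles --- which you concede is ``the genuinely hard part'' --- is the entire content of the theorem and rests on the algebraic machinery of \cite{Lacz1} (conjugation of tiling coordinates, angle equations at vertices); naming it as a black box does not discharge it.

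The converse (2)$\Rightarrow$(1) is also not routine, and your sketch would fail as described: a ``periodic staircase'' of tile--mirror-tile parallelograms cannot close up into a triangle, since no triangle can be tiled by parallelogram strips alone --- the three corners of angle $\pi/3$ must be filled by single tiles or nontrivial corner configurations, and it is precisely arranging this (e.g.\ via the pinwheel trisection of an equilateral triangle of side $a+b$ into three copies of the $(a,b,\pi/3)$-tile plus a central equilateral triangle of side $c$, and then making the recursion terminate using rationality of $c$) that constitutes Laczkovich's construction. Finally, even granting a dissection into some number $N$ of congruent tiles, you still owe the argument that $N$ has the form $nk^2$ for the \emph{given} square-free $n$: this follows from $N\cdot ab=(\text{rational})^2\cdot 4n$ forcing $Nn$ to be a perfect square, a small but necessary step your write-up skips. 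In short: correct skeleton and correct translation into the elliptic-curve language, but the two theorems you would need (tile classification and the tiling construction) are exactly Laczkovich's, and one of your intermediate claims (rational sides, exceptional cases only for $n\le3$) is refuted by the $n=6$ altitude dissection.
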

Note that the condition $y^2 = x(x-n)(x+3n)$ or $y^2 = x(x+n)(x-3n)$ has a rational point with $y\ne 0$ is the same as $n$ is $\pi/3$- or $2 \pi /3$-congruent \cite{Yoshida}. 

We consider squares with rational sides and relate the study of $\pi/4$-congruent 
and $3 \pi/4$-congruent numbers with 
tilings of such squares  
by congruent triangles. 
It is known (\cite{Lacz1}, Corollary 3.7) 
that in every tiling of such squares  
with congruent triangles, the pieces 
must be right triangles with commensurable perpendicular sides.  
In this note, we call a right triangle  \emph{almost rational} 
if the perpendicular sides $a$ and $b$  
are rational numbers such that $a^2 + 2b^2 \pm 2ab \in \mathbb{Q}^2$. 

\begin{proposition}
Let $n$ be a positive square-free odd integer. The following are equivalent.
\begin{enumerate}
\item There is a positive integer $k$ 
such that a square with rational sides 
can be dissected 
into $2nk^2$ congruent almost rational right triangles. 
\item $2n$ is $\pi/4$-congruent or 
$3 \pi /4$-congruent.
\end{enumerate}
\end{proposition}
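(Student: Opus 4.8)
The plan is to set up an explicit scaling (homothety) correspondence between an almost rational right triangle that tiles a rational square and a $\pi/4$- or $3\pi/4$-triangle of area $2n$. Recall from the cosine-rule equivalences in Section~\ref{CN-EC} that $2n$ is $\pi/4$-congruent (resp.\ $3\pi/4$-congruent) exactly when there are positive rationals $a,b,c$ with $ab=4n$ and $c^2=a^2+2b^2-2ab$ (resp.\ $c^2=a^2+2b^2+2ab$). The bridge is the observation that the right triangle with legs $a,b$ is almost rational precisely when $a^2+2b^2\pm 2ab$ is a rational square, i.e.\ precisely when the corresponding square root plays the role of the side $c$ of a $\pi/4$- or $3\pi/4$-triangle; its area is $ab/2$. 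Thus the whole proposition reduces to reconciling the area $ab/2$ of the tile with the prescribed number $2nk^2$ of tiles.

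For the implication $(2)\Rightarrow(1)$ I would argue constructively. Suppose $2n$ is $\pi/4$-congruent, witnessed by positive rationals $a,b,c$ with $ab=4n$ and $c^2=a^2+2b^2-2ab$ (the $3\pi/4$ case is identical with the opposite sign). The right triangle $T$ with legs $a$ and $b$ is then almost rational and has area $ab/2=2n$. Two copies of $T$ glued along the hypotenuse form an $a\times b$ rectangle, and since $a,b$ are rational I can tile a suitable rational square by a rectangular grid of such rectangles: writing $b/a=\beta/\alpha$ in lowest terms and $c_0=a/\alpha=b/\beta$, a grid of $\alpha t\times\beta t$ rectangles fills the square of side $\beta t\,a$ using $2\alpha\beta t^2$ copies of $T$. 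The bookkeeping step is to choose $t$ so that this count equals $2nk^2$: since $\alpha\beta c_0^2=ab=4n$ one has $\alpha\beta/n=(2/c_0)^2$, so a choice of $t$ clearing the denominator of $c_0$ makes $2\alpha\beta t^2=2nk^2$ with $k$ a positive integer. This produces the desired dissection.

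For $(1)\Rightarrow(2)$ I would run the correspondence in reverse, and here only areas are needed. Given a dissection of a rational square of side $\ell$ into $2nk^2$ congruent almost rational right triangles, let $A,B$ be the (rational) legs of the tile, so that $A^2+2B^2\pm2AB=C^2$ for some rational $C$ and one choice of sign. Summing areas gives $2nk^2\cdot\tfrac{AB}{2}=\ell^2$, hence $nAB=(\ell/k)^2$ is a rational square, say $nAB=s^2$. Applying the homothety of ratio $\lambda=2n/s$ to the tile, i.e.\ setting $a=\lambda A$ and $b=\lambda B$, yields $ab=\lambda^2AB=4n$ and $a^2+2b^2\pm2ab=\lambda^2C^2=(\lambda C)^2\in(\mathbb{Q}^\times)^2$; since $a,b>0$ and the included angle is $\pi/4$ or $3\pi/4$, the triple $(a,b,\lambda C)$ is exactly a witness that $2n$ is $\pi/4$- or $3\pi/4$-congruent.

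The step needing the most care is the numerical matching rather than any geometry: unlike the equilateral case of Laczkovich, a rational right triangle tiles some rational square unconditionally (via the rectangle construction), so the genuine content is that the tile count $2nk^2$ forces $nAB$ (resp.\ $ab$) into the correct square class. I would verify that the scaled triple defines a non-degenerate triangle (positivity of $a,b,\lambda C$ and $0<\text{angle}<\pi$, which hold automatically) and that the grid construction hits $2nk^2$ exactly. The assumption that $n$ is odd and square-free is used only to ensure $2n$ is square-free, so that the assertion that $2n$ is $\pi/4$- or $3\pi/4$-congruent is unambiguous and matches $E_{\pm2n}$ as in Theorem~\ref{congruence_MWrank}; the scaling argument itself does not otherwise require it.
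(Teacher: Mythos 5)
Your proposal is correct and follows essentially the same route as the paper: the direction $(1)\Rightarrow(2)$ is an area count combined with a rational homothety to land in the right normalization (the paper rescales by $k/2$ to reach $4n\,a'b'=1$, you rescale by $2n/s$ to reach $ab=4n$ --- the same square-class argument), and $(2)\Rightarrow(1)$ is the identical rectangle-grid construction, doubling the triangle along its hypotenuse and tiling a rational square, with the paper obtaining $8nt^2=2n(2t)^2$ tiles where you arrange $2\alpha\beta t^2=2n(2t/c_0)^2$. The only differences are bookkeeping conventions, and your divisibility choice of $t$ (a multiple of the numerator of $c_0$) works, so no gap.
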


\begin{proof}
Any square can be obtained from the unit square 
by performing an appropriate shrinking or 
stretching transformation, and vice versa. Consequently, 
a square with rational sides has a tiling with $2nk^2$ congruent 
almost rational right triangles 
if and only if the unit square 
has a tiling with $2nk^2$ congruent 
almost rational right triangles.
We assume henceforth that the square is the unit square.

If the unit square can be dissected 
into $2nk^2$ congruent almost rational right triangles 
with rational sides $a,b$ then  
there exists a rational number $c$ such that 
$a^2 + 2b^2 \pm 2ab = c^2$. 
Equivalently we have 
$(ak/2)^2 + 2(bk/2)^2 \pm 2(ak/2)(bk/2) = (ck/2)^2$.
Comparing the areas of the square and the tiles we have  
$1 = abnk^2 = 4n(ak/2)(bk/2)$. 
Thus, 
$2n$ is $\pi/4$-congruent or 
$3 \pi/4$-congruent.

Conversely, suppose $2n$ is $\pi/4$-congruent or 
$3 \pi/4$-congruent.
Let $a, b$ be positive rational numbers such that 
$a^2 + 2b^2 \pm 2ab \in \mathbb{Q}^2$ 
and $4abn = 1$. 
Clearly, we have $ab < 1/4$. 
Choose an integer $t$ large enough 
so that the square can be dissected into 
congruent rectangles of sides $a/t$ and $b/t$.
One of the diagonals will divide each of 
these small rectangles into two 
congruent right triangles. 
Since $(a/t)^2 + 2(b/t)^2 \pm 2ab/t^2$ 
is still a square of a rational number, 
these right triangles are almost rational. 
If $2N$ is the number of pieces of 
these triangles in the decomposition for 
the square then comparing areas, we find that 
\[
N= \frac{t^2}{ab} = 4nt^2.
\] 
Thus, we are able to obtain a tiling of the square into $8nt^2$ congruent almost rational right triangles.
\end{proof}

Corollary \ref{BSD-root} claims that 
under the Birch and Swinnerton-Dyer conjecture, 
an even integer is either 
$\pi/4$-congruent or $3 \pi/4$-congruent. 
Thus, we have the following result.

\begin{corollary}
Let $n$ be a positive square-free odd integer. 
If the Birch and Swinnerton-Dyer conjecture holds, 
then for a given square with rational sides there is a positive integer $k$ 
such that the square can be dissected 
into $2nk^2$ congruent almost rational right triangles. 
\end{corollary}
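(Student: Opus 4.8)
The plan is to prove the final corollary by combining the equivalence established in the preceding proposition with the conditional result from Corollary~\ref{BSD-root}. The corollary asserts that, under the Birch and Swinnerton-Dyer conjecture, for any positive square-free odd integer $n$ and any square with rational sides, there is a positive integer $k$ so that the square can be dissected into $2nk^2$ congruent almost rational right triangles. The natural strategy is to show that the right-hand condition (2) of the proposition is always satisfied, and then invoke the proposition to deduce condition (1), which is exactly the statement of the corollary.

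First I would observe that condition (2) of the proposition reads: ``$2n$ is $\pi/4$-congruent or $3\pi/4$-congruent.'' So the entire task reduces to proving that, assuming BSD, the even number $2n$ is necessarily $\pi/4$-congruent or $3\pi/4$-congruent whenever $n$ is a positive square-free odd integer. This is precisely the content needed from Corollary~\ref{BSD-root}. That corollary, in its statements (2) and (3), asserts that under BSD, $2n$ is $\pi/4$-congruent whenever $n \equiv 1,7 \pmod 8$, and $2n$ is $3\pi/4$-congruent whenever $n \equiv 3,5 \pmod 8$. Since a positive square-free odd integer $n$ must lie in one of the residue classes $1,3,5,7 \pmod 8$, every such $n$ falls into at least one of these two cases. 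Hence $2n$ is $\pi/4$-congruent (when $n \equiv 1,7$) or $3\pi/4$-congruent (when $n \equiv 3,5$), so condition (2) holds in all cases.

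Having verified condition (2) unconditionally modulo BSD, I would then apply the proposition's equivalence $(2) \Rightarrow (1)$ to conclude that there is a positive integer $k$ such that the given square with rational sides can be dissected into $2nk^2$ congruent almost rational right triangles. This completes the proof. I would note that the odd-$n$ hypothesis is essential for the residue-class case analysis to be exhaustive via $\{1,3,5,7\} \pmod 8$, and that the square-free assumption aligns with the standing convention used throughout in defining $\pi/4$- and $3\pi/4$-congruence via the twists $E_{\pm 2n}$.

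The proof is essentially a routine synthesis, so there is no genuine analytic obstacle here; the substance was already absorbed into Corollary~\ref{BSD-root} (whose root-number computations in Proposition~\ref{rootnumbers} do the real work) and into the preceding proposition. The only point requiring minor care is confirming that the case split $n \equiv 1,7$ versus $n \equiv 3,5 \pmod 8$ genuinely covers every odd residue and dovetails correctly with the two disjuncts of condition (2); this is immediate but worth stating explicitly so the reader sees that no odd square-free $n$ is left unaccounted for. I would keep the written proof to a couple of sentences, citing Corollary~\ref{BSD-root} for the congruence property and the proposition for the dissection equivalence.
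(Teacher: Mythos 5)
Your proposal is correct and follows essentially the same route as the paper, which likewise deduces the corollary by noting (via statements (2) and (3) of Corollary~\ref{BSD-root}) that under BSD every even integer $2n$ with $n$ odd and square-free is $\pi/4$- or $3\pi/4$-congruent, and then invoking the equivalence of the preceding proposition. The only cosmetic point, glossed over by both you and the paper, is the edge case $n=1$ (Corollary~\ref{BSD-root} is stated for $n>1$), which is harmless since $2$ is unconditionally $\pi/4$-congruent, as the paper's explicit $(1,\,4\sqrt{2},\,5)$ triangle shows.
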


{\small

}


\begin{thebibliography}{100}
	
\bibitem{Magma}
	W. Bosma, J. Cannon and C. Playoust,  
	The Magma algebra system. I. The user language, J. Symbolic Comput. {\bf 24} (1997), 235--265.

\bibitem{Fujiwara}
	M. Fujiwara,
	$\theta$-congruent numbers, in: Number Theory, 
	K. Gy\"ory, A. Peth\"o, and 
	V. S\'os (eds.), de Gruyter, (1997), 235--241.

\bibitem{Gouvea-Mazur}
	F. Gouv{\^e}a and B. Mazur,
	The square-free sieve and the rank of elliptic curves. 
	J. Amer. Math. Soc.
	{\bf 4} (1991), 1--23.
	
\bibitem{Greaves}	
	G. Greaves,
	Power-free values of binary forms. 
	Quart. J. Math. Oxford Ser. (2) 
	{\bf 43} (1992), 45--65.	

	
\bibitem{Jones}
	B. Jones, 
	The Arithmetic Theory of Quadratic Forms. 
	Carus Math. Monographs 10, 
	Math. Assoc. Amer., Wiley, 1950.	

\bibitem{Kan}
	M. Kan,
	$\theta$-congruent numbers and elliptic curves.
	Acta Arith., XCIV.2 (2000), 153--160.	
	
	
\bibitem{Lacz1}
	M. Laczkovich,
	Tilings of convex polygons with congruent triangles. 
	Discrete Comput. Goem. {\bf 48}-2 (2012), 330--372.	
	
\bibitem{Lacz2}
	M. Laczkovich,
	Rational Points of Some Elliptic Curves Related to the Tilings of the Equilateral Triangle.	
	Discrete Comput. Geom. (2020) 64:985--994.

\bibitem{Lario}
	J-C. Lario,
	Al-Karaj{\'i} y yo.
	Gac. R. Soc. Mat. Esp. 
	{\bf 19 }:1 (2016), 133--149.

\bibitem{Mestre} 
    J.-F. Mestre, 
    Rang des courbes elliptiques d'invariant donne.
    C. R. Acad. Sci. Paris Ser. I 314 (1992), 919-922.
	
\bibitem{Pizer}
	A. Pizer,
	On the 2-Part of the Class Number of 
	Imaginary Quadratic Number Fields. 
	J. Number Theory {\bf 8} (1976), 
	184--192.
	
\bibitem{Purkait} S. Purkait,  On Shimura's decomposition. 
Int. J. Number Theory {\bf 9} (2013), 1431--1445.




\bibitem{Rizzo} O. G. Rizzo, Average root numbers for a nonconstant family of elliptic Curves. Compos. Math. {\bf 136} (2003), 1--23.

\bibitem{SteTop}
	C. L. Stewart and J. Top,
	On ranks of twists of elliptic curves and power-free values of binary forms. 
	J. Amer. Math. Soc. {\bf 8} (1995), 943--973.	

\bibitem{SZ}
	J. Shu and S. Zhai,
	Generalized Birch lemma and the 2-part of the Birch and 
	Swinnerton-Dyer conjecture for certain elliptic curves. 
	To appear in: J. Reine Angew. Math. (2021). doi: 10.1515/crelle-2021-0004.
		
\bibitem{ST}		
	R. Stroeker and J. Top,
	On the equation $Y^2 = (X+p)(X^2 + p^2)$. 
	Rocky Mount. J. Math. {\bf 24} (1994), 1135--1161.

\bibitem{Sage}
	The Sage Developers. 
	SageMath, the Sage Mathematics Software System (Version 9.0), sagemath.org (2020).	

\bibitem{TopYui}
	J. Top and N. Yui, 
	Congruent number problems and their variants. In: \emph{Algorithmic number theory: lattices, number fields, curves and cryptography}. 
	Vol. 44. Math. Sci. Res. Inst. Publ. Cambridge Univ. Press, Cambridge, 2008, 613--639.

\bibitem{Waldspurger} J. -L. Waldspurger, 
     Sur les coefficients de Fourier des formes modulaires de poids demi-entier. 
     J. Math. Pures Appl. {\bf 60} (1981), 375--484.	

\bibitem{Shimura} G.\ Shimura,
The critical values of certain
zeta functions associated with modular forms of half-integral weight. 
J. Math. Soc. Japan {\bf 33} (1981), 649--672.

\bibitem{Shimura2} G.\ Shimura, On the periods of modular forms. Math. Ann. {\bf 229} (1977), 211--221.

\bibitem{Siegel}
	C. L. Siegel, 
	{\"U}ber einige Anwendungen diophantischer Approximationen. 
	Abh. Preuss. Akad. Der Wissenschaften Phys.-math. Kl. {\bf 1} (1929), 209--266.

\bibitem{Tian}
        Y. Tian, 
        Congruent numbers and Heegner points. 
        Camb. J. Math. {\bf 2} (2014), no. 1, 117--161.
	
\bibitem{Tunnell}
    J. B. Tunnell, 
    A classical diophantine problem and modular forms. 
    Invent. Math. {\bf 72}, (1983), 323--334
    
\bibitem{Yoshida}
        S.-i. Yoshida, “Some variants of the congruent number problem, II”. Kyushu J. Math. {\bf 56}:1 (2002), 147--165.
	
\end{thebibliography}
\end{document}